\documentclass[a4paper,11pt, leqno]{article}

\usepackage[intlimits]{amsmath}
\usepackage{amssymb}
\usepackage{amsfonts}
\usepackage{mathrsfs}
\usepackage{amsthm}
\usepackage{colonequals,comment}
\usepackage[hidelinks]{hyperref} 
\usepackage[nameinlink]{cleveref} 
\usepackage{geometry}
\usepackage{mathtools}
\usepackage{xcolor}
\usepackage{relsize}
\usepackage{faktor}
\usepackage[T1]{fontenc}
\usepackage[english]{babel}
\usepackage{csquotes}
\usepackage[shortlabels]{enumitem}
\usepackage{setspace}
\usepackage[super]{nth}

\usepackage{tikz}
\usetikzlibrary{positioning,arrows} 
\usetikzlibrary{decorations.pathreplacing} 
\usetikzlibrary{intersections,shapes}
\usetikzlibrary{arrows.meta}
\usetikzlibrary{decorations.markings}
\usepackage{tikzsymbols} 
\usepackage{tikz-cd}
\tikzset{
  equal/.style={
    -,double,
    postaction={decorate,decoration={markings,mark=at position 0.5 with {\node[transform shape] {=};}}}
  }
}

\newcounter{listcounter}
\newcounter{deflistcounter}
\newcounter{equivcounter}

\newskip{\itemsepamount}
\newskip{\topsepamount}
\newenvironment{assertionlist}{%
  \begin{list}
    {\upshape (\arabic{listcounter})}
    {\setlength{\leftmargin}{18pt}
     \setlength{\rightmargin}{0pt}
     \setlength{\itemindent}{0pt}
     \setlength{\labelsep}{5pt}
     \setlength{\labelwidth}{13pt}
     \setlength{\listparindent}{\parindent}
     \setlength{\parsep}{0pt}
     \setlength{\itemsep}{\itemsepamount}
     \setlength{\topsep}{\topsepamount}
     \usecounter{listcounter}}}
  {\end{list}}

\newenvironment{definitionlist}{%
  \begin{list}
    {\upshape (\alph{deflistcounter})}
    {\setlength{\leftmargin}{18pt}
     \setlength{\rightmargin}{0pt}
     \setlength{\itemindent}{0pt}
     \setlength{\labelsep}{5pt}
     \setlength{\labelwidth}{13pt}
     \setlength{\listparindent}{\parindent}
     \setlength{\parsep}{0pt}
     \setlength{\itemsep}{\itemsepamount}
     \setlength{\topsep}{\topsepamount}
     \usecounter{deflistcounter}}}
  {\end{list}}

\newenvironment{equivlist}{%
  \begin{list}
    {\upshape (\roman{equivcounter})}
    {\setlength{\leftmargin}{18pt}
     \setlength{\rightmargin}{0pt}
     \setlength{\itemindent}{0pt}
     \setlength{\labelsep}{5pt}
     \setlength{\labelwidth}{13pt}
     \setlength{\listparindent}{\parindent}
     \setlength{\parsep}{0pt}
     \setlength{\itemsep}{\itemsepamount}
     \setlength{\topsep}{\topsepamount}
     \usecounter{equivcounter}}}
  {\end{list}}

\newenvironment{bulletlist}{%
  \begin{list}
    {\upshape \textbullet}
    {\setlength{\leftmargin}{18pt}
     \setlength{\rightmargin}{0pt}
     \setlength{\itemindent}{0pt}
     \setlength{\labelsep}{6pt}
     \setlength{\labelwidth}{12pt}
     \setlength{\listparindent}{\parindent}
     \setlength{\parsep}{0pt}
     \setlength{\itemsep}{\itemsepamount}
     \setlength{\topsep}{\topsepamount}}}
  {\end{list}}

\newcommand{\N}{\mathbb{N}}
\newcommand{\Z}{\mathbb{Z}}

\renewcommand{\O}{\mathcal{O}}
\newcommand{\Coker}{\mathrm{Coker}}
\newcommand{\Ker}{\mathrm{Ker}}
\renewcommand{\Im}{\mathrm{Im}}
\newcommand{\Hom}{\mathrm{Hom}}
\newcommand{\colim}{\mathrm{colim}}
\newcommand{\Spec}{\mathrm{Spec}\,}
\newcommand{\id}{\mathrm{id}}

\usepackage[
  backend=biber,
  style=numeric
]{biblatex}

\theoremstyle{plain}
\newtheorem{Theorem}{Theorem}[section]
\newtheorem{Proposition}[Theorem]{Proposition}
\newtheorem{Lemma}[Theorem]{Lemma}
\newtheorem{Corollary}[Theorem]{Corollary}

\newtheorem{customthm}{Theorem}
\newenvironment{cthm}[1]
  {\renewcommand\thecustomthm{#1}\customthm}
  {\endcustomthm}
\theoremstyle{definition}
\newtheorem{Definition}[Theorem]{Definition}
\newtheorem{customdef}{Definition}
\newenvironment{cdef}[1]
  {\renewcommand\thecustomdef{#1}\customdef}
  {\endcustomdef}
\newtheorem{Remark}[Theorem]{Remark}
\newtheorem{Reminder}[Theorem]{Reminder}

\newtheorem{Example}[Theorem]{Example}

  {\par\interskip}%

\numberwithin{equation}{subsection}

\author{\large Saskia Kern}
\date{\small \today} 
\title{\LARGE The crisp topology, a refinement of the fpqc topology}

\begin{document}

\pagenumbering{gobble}

    \maketitle
    \pagenumbering{arabic}
    \noindent{\scshape Abstract.\ }We introduce the crisp topology for schemes as a refinement of the fpqc topology. This Grothendieck topology uses the new notion of crisp morphisms, which generalise universal injectivity from ring homomorphisms to arbitrary morphisms of schemes. We study basic properties and demonstrate that this topology is well-behaved.
    \noindent{\scshape MSC.\ } 13B10, 14A05, 14A15, 14F20 (Primary), 14F06 (Secondary)

\section*{Introduction}\label{intro}

Pure morphisms of modules and commutative rings have been studied in France (cf. \cite{Laz69}, \cite{Oli70}) as a solution to the problem of finding some class of effective descent morphisms which is “weak” in the sense that it includes any other class of effective descent morphisms (cf. [Stacks, Tag 08WE]) -- it turns out that being a pure morphism is indeed equivalent to being an effective descent morphism for modules (cf. \cite[Theorem 3]{Oli70}). This leads to a close connection between pure and faithfully flat ring homomorphisms, as the latter are commonly studied effective descent morphisms. Moreover, Yves André and Luisa Fiorot have shown that on the category of affine schemes over a base commutative ring, the canonical and the effective descent topology coincide (cf. \cite[Theorem 3.2]{AF21}), further highlighting the importance of pure ring homomorphisms.  This paper aims to introduce crisp scheme morphisms, with which we try to solve the problem of generalising the notion of purity for ring homomorphisms to a naturally related concept for schemes. As usual, let all rings be commutative and with one. 

\begin{cdef}{I}[Definition~\ref{crisp-module-map}]
    Let $A$ be a ring and $M$ and $N$ be $A$-modules. An $A$-linear map $u\colon M\to N$ is called \textit{universally injective} or \textit{pure} if the morphism 
        \[u\otimes \id_P \colon M\otimes_A P\to N\otimes_A P\]
    is injective for all $A$-modules $P$, cf. \cite[p. 86]{Laz69}.
\end{cdef}

\begin{cdef}{II}[Definition~\ref{crisp-ring-hom}]
    Let $A$ and $B$ be rings. A ring homomorphism $\varphi\colon A\to B$ is called \textit{universally injective} or \textit{pure} if it is pure as a morphism in ($A$-Mod), cf. \cite[Definition 1.1]{Oli70}. 
\end{cdef}

Equivalently, a ring homomorphism $\varphi\colon A\to B$ is pure if for any $A$-algebra $R$, $R\to R\otimes_A B$ is injective (cf. \cite[Proposition 2.2]{AF21}). It is easy to see that this concept can be ``generalised'' to morphisms of affine schemes by requiring the associated ring homomorphism to be pure. However, as this doesn't extend to arbitrary scheme morphisms, an alternative approach has to be taken. In this context, we want to note that we will avoid the term ``universal injectivity'' from now on as for schemes, this has nothing to do with our sought-after generalisation.

Because purity for ring homomorphisms really lives in the categories of modules and algebras, an intuitive approach would perhaps be to adopt the definition of purity to quasi-coherent modules (almost) ad verbatim. Indeed, this has been done in papers by Picavet and Mesablishvili (cf. \cite{Pic13}, \cite{Mes04a}, \cite{Mes04b}): A quasi-compact morphism of schemes $f\colon X\to Y$ is called \textit{pure} if it is universally schematically dominant, i.e. if the morphism $\O_Y\to f_* \O_X$ is injective after arbitrary base change. Although this definition is meaningful as it coincides with purity of ring homomorphisms in the affine case (where we can consider ring algebras instead of modules, see Theorem \ref{crisp-equivalences} (ii)), it has drawbacks since we have to restrict ourselves to quasi-compact morphisms.

Since purity generalises faithful flatness for ring homomorphisms, another approach is to closely relate the definition of purity for schemes to the definition of faithful flatness for schemes by requiring a pure morphism to be surjective and pure on the stalks. This doesn't work either, since for affine schemes, this is not equivalent to the purity of the underlying ring homomorphisms, which will be demonstrated in Chapter 2 (cf. Example \ref{sch-examples}).

We thus propose a new generalisation which doesn't restrict itself to quasi-compact morphisms yet is compatible with the affine notion of purity:

\begin{cdef}{IV}[Definition~\ref{crisp-morphism}]
    A surjective morphism of quasi-separated schemes $f\colon X\to Y$ is called \textit{crisp}, if it is locally quasi-compact surjective and for every open affine subscheme $V\subseteq Y$, there is a quasi-compact open $U\subseteq f^{-1}(V)$ with $f(U) = V$ such that for every finite open affine covering $U_{1},\dots, U_{r}$ of $U$, the ring homomorphism corresponding to the morphism of affine schemes
            \[\bigsqcup_{i=1} ^r U_{i}\longrightarrow V\]
        is pure.
\end{cdef}

For other equivalent characterisations, we refer to Definition \ref{crisp-morphism}.

We will show  that the ring homomorphism associated with a morphism of affine schemes is pure if and only if the morphism of schemes is crisp, making this indeed a sensible generalisation. Moreover, any fpqc morphism of schemes is crisp, which preserves the relation to faithful flatness.

Being crisp is equivalent to being pure for quasi-compact morphisms of schemes. This means that the progress made in studying pure morphisms is also useful for crisp morphisms.

As the property ``pure'' has different meanings in different contexts, one might argue that since there now are crisp morphisms generalising Definition \ref{crisp-ring-hom}, the notion of purity could be dropped for rings and modules altogether for the sake of a well-defined nomenclature. Indeed, this paper proposes to call any morphism satisfying Definition \ref{crisp-module-map} or \ref{crisp-ring-hom} crisp instead of pure or universally injective. We will do so from now on.

A substantial part of this paper is dedicated to the study of crisp morphisms of schemes, leading to the following vital theorems for working with them (which will be proven in Chapter 2):

\begin{cthm}{V}[Theorem~\ref{permanence-crisp}]\label{ppintro}
    Crisp morphisms of schemes satisfy the following properties:
    \begin{assertionlist}
        \item Being crisp is stable under composition.
        \item Being crisp is stable under base change.
        \item Being crisp is local on the target.
    \end{assertionlist}
\end{cthm}

\begin{cthm}{VI}[Propositions~\ref{faithfully-flat-crisp}, \ref{pureqcqs-crisp} and \ref{crisp-subtrusive}]\label{crisp-context}
    Let $f\colon X\to Y$ be a morphism of schemes. 
    \begin{assertionlist}
        \item If $f$ is an fpqc covering, then it is crisp.
        \item If $f$ is quasi-compact, then it is crisp if and only if it is pure.
        \item If $f$ is crisp and quasi-compact, then it is universally subtrusive (and in particular universally submersive)\footnote{see Definitions \ref{submersive} and \ref{subtrusive} for details}.
    \end{assertionlist}
\end{cthm}

In addition to these basic statements, the paper introduces a crisp Grothendieck topology on the category of schemes over a base scheme $S$. 

\begin{cdef}{VII}[Definition~\ref{crisp-top}]
    The \textit{crisp} topology on $(\mathrm{Sch}/S)$ is the topology in which the coverings $\{U_i\to U\}_{i\in I}$ are collections of morphisms such that the induced morphism
        \[\bigsqcup_{i\in I} U_i\to U\]
    is crisp.
\end{cdef}

In this setting, we will compare the crisp topology to the fpqc topology, highlighting two aspects:
Firstly, we will study properties of morphisms of schemes which can be checked locally on the target in the crisp topology. This breaks down to examining which properties of morphisms of schemes descend along crisp quasi-compact morphisms of schemes in the Zariski topology. We present a theorem encompassing answers to this question.

\begin{cthm}{VII}[Theorem~\ref{crispt-descent} and Corollary~\ref{crisp-loc-props}]
     Let $f\colon X\to Y$ be a morphism of $S$-schemes and let $\{Y_i\to Y\}_{i\in I}$ be a crisp covering of $Y$. If for all $i$, the projection $Y_i\times_Y X\to Y_i$ has property \textbf{P}, then $f$ has property \textbf{P} as well, where \textbf{P} is one of the following properties:
    \begin{assertionlist}
        \item surjective
        \item injective
        \item bijective
        \item a morphism with set-theoretically finite fibres
        \item open
        \item closed
        \item a homeomorphism
        \item quasi-compact
        \item quasi-separated
        \item separated
        \item (locally) of finite type
        \item (locally) of finite presentation
        \item smooth
        \item unramified
        \item étale
        \item an isomorphism
        \item a monomorphism
        \item an open immersion
        \item proper
        \item quasi-finite
    \end{assertionlist}
\end{cthm}

Secondly, we will show that the crisp topology is subcanonical, drawing back to a similar statement made by André and Fiorot for affine schemes (cf. \cite[Theorem 3.2]{AF21}).

This paper is divided into three main chapters: In Chapter 1, we lay the foundation for any scheme-theoretic considerations by studying crisp ring homomorphisms and module maps. In 1970, Jean-Pierre Olivier gave a list of equivalent characterisations of crisp ring homomorphisms, albeit without proof (cf. \cite[Proposition 2.2]{Oli70}). Some of the proofs were already provided by Yves André and Luisa Fiorot in 2021 (cf. \cite{AF21}), and in this paper, we add a few more, which are partly based on a correspondence with André and Fiorot. Over time, there have appeared even more characterisations, notably in publications by Melvin Hochster and Joel Roberts (cf. \cite{HR74}), Yves André and Luisa Fiorot (cf. \cite{AF21}) as well as by Rüdiger Göbel and Jan Trlifaj (cf. \cite{GT06}). All of these results will be brought together in Theorem \ref{crisp-equivalences}, where we have added more equivalences of our own. The second chapter is all about crisp morphisms of schemes: Here, we will prove the above theorems about basic properties of such morphisms. The third chapter is dedicated to the crisp Grothendieck topology.\\

\noindent{\scshape Acknowledgements.\ }

I would like to express gratitute for my advisor Torsten Wedhorn's support while writing this paper. Moreover, I want to thank Chirantan Chowdhury, Yanik Kleibrink, Michelle Klemt, Catrin Mair, Christopher Lang and Alexander von Papen for proofreading different versions of the paper. Last but not least, I want to thank Yves André and Luisa Fiorot for helpful conversations via email.

    \tableofcontents
    \section{On crisp morphisms of rings and modules}\label{sec:two}

Let us briefly recall the definitions of crisp ring homomorphisms and module maps.

\begin{Definition}\label{crisp-module-map}
    Let $A$ be a ring and $M$ and $N$ be $A$-modules. An $A$-linear map $u\colon M\to N$ is called \textit{crisp} if the morphism 
        \[u\otimes \id_P \colon M\otimes_A P\to N\otimes_A P\]
    is injective for all $A$-modules $P$, cf. \cite[p. 86]{Laz69}.
\end{Definition}

\begin{Definition}\label{crisp-ring-hom}
    Let $A$ and $B$ be rings. A ring homomorphism $\varphi\colon A\to B$ is called \textit{crisp} if it is crisp as a morphism in ($A$-Mod), cf. \cite[Definition 1.1]{Oli70}.
\end{Definition}

\begin{Lemma}\label{ff-crisp-rings}
Any faithfully flat ring homomorphism is crisp.
\end{Lemma}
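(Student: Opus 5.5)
The plan is the standard base-change argument. Let $\varphi\colon A\to B$ be faithfully flat and let $P$ be an arbitrary $A$-module. We must show that
\[\varphi\otimes\id_P\colon P\cong A\otimes_A P\longrightarrow B\otimes_A P,\qquad p\mapsto 1\otimes p,\]
is injective. Write $K$ for its kernel. Since $B$ is faithfully flat over $A$, it suffices to show $K\otimes_A B=0$: faithful flatness means $M\otimes_A B=0$ forces $M=0$. Equivalently, by exactness of $-\otimes_A B$, it suffices to show that $(\varphi\otimes\id_P)\otimes_A\id_B$ is injective.

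The key step is to identify this tensored map. Tensoring $P\to B\otimes_A P$ with $B$ over $A$ gives
\[P\otimes_A B\longrightarrow B\otimes_A P\otimes_A B,\qquad p\otimes b\mapsto 1\otimes p\otimes b.\]
This map has a left inverse, namely the multiplication map $b'\otimes p\otimes b\mapsto p\otimes b'b$. This left inverse is well defined: it is obtained from the $A$-bilinear multiplication $B\times B\to B$ tensored with $P$, after reordering factors. Composing, $p\otimes b\mapsto 1\otimes p\otimes b\mapsto p\otimes b$, which is the identity. Hence the tensored map is split injective.

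By flatness of $B$, the sequence $0\to K\to P\to B\otimes_A P$ stays exact after applying $-\otimes_A B$. So $K\otimes_A B$ is the kernel of the split injection above, and is therefore $0$. Faithful flatness then gives $K=0$, which proves the claim.

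The only point that needs any care is checking that the retraction is well defined and $A$-linear. This is routine, because everything is built from the $A$-algebra structure of $B$ and the symmetric monoidal structure of $\otimes_A$. The step doing the real work is the use of faithful flatness in the form "$M\otimes_A B=0\Rightarrow M=0$".
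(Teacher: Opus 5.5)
Your proof is correct and is essentially the argument behind the paper's proof, which simply cites Stacks, Tag 05CK: injectivity of $P\to B\otimes_A P$ may be checked after the faithfully flat base change $-\otimes_A B$, and there the map $p\otimes b\mapsto 1\otimes p\otimes b$ has the multiplication map as a retraction. This is also the same split-injection idea the paper uses for the step $\textnormal{(v)}\Rightarrow\textnormal{(i)}$ of Theorem~\ref{crisp-equivalences}.
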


\begin{proof}
    Cf. \cite[\href{https://stacks.math.columbia.edu/tag/05CK}{Tag 05CK}]{stacks-project}.
\end{proof}

\begin{Lemma}\label{crisp-ring-hom-surj}
    Let $A$ and $B$ be rings. If the ring homomorphism $\varphi\colon A\to B$ is crisp, then the induced morphism of affine schemes $^a\varphi\colon \Spec B\to \Spec A$ is surjective.
\end{Lemma}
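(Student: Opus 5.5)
To prove Lemma~\ref{crisp-ring-hom-surj}, I will show directly that every prime $\mathfrak p\subseteq A$ lies in the image of $^a\varphi$. The idea is to tensor $\varphi$ with the residue field $\kappa(\mathfrak p)=A_{\mathfrak p}/\mathfrak pA_{\mathfrak p}$, viewed as an $A$-module. By Definition~\ref{crisp-ring-hom} this gives an injective map
\[\varphi\otimes\id_{\kappa(\mathfrak p)}\colon \kappa(\mathfrak p)=A\otimes_A\kappa(\mathfrak p)\longrightarrow B\otimes_A\kappa(\mathfrak p).\]
Since $\kappa(\mathfrak p)\neq 0$ and the map sends $1$ to $1$, injectivity forces $1\neq 0$ in $B\otimes_A\kappa(\mathfrak p)$. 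So the fibre ring $B\otimes_A\kappa(\mathfrak p)$ is a nonzero ring.

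Next I use the standard description of the fibre: the fibre of $^a\varphi$ over $\mathfrak p$ is homeomorphic to $\Spec(B\otimes_A\kappa(\mathfrak p))$. A nonzero ring has a maximal ideal, hence a prime ideal, so this spectrum is nonempty. Therefore some prime $\mathfrak q\subseteq B$ satisfies $\varphi^{-1}(\mathfrak q)=\mathfrak p$.

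For a self-contained account of this identification of the fibre, I would argue as follows. Primes of $B\otimes_A\kappa(\mathfrak p)=S^{-1}B/\mathfrak pS^{-1}B$, where $S=\varphi(A\setminus\mathfrak p)$, correspond to primes $\mathfrak q$ of $B$ with $\mathfrak q\cap S=\emptyset$ and $\mathfrak q\supseteq\varphi(\mathfrak p)B$. These are exactly the primes with $\varphi^{-1}(\mathfrak q)=\mathfrak p$.

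There is no real obstacle here. The only point needing care is to apply crispness with the correct module $P=\kappa(\mathfrak p)$ and to note that a unital ring map out of a nonzero ring being injective makes the target nonzero.
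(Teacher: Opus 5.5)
Your proof is correct and follows essentially the same route as the paper. Both apply crispness with $P=\kappa(\mathfrak p)$ to see that the fibre ring $\kappa(\mathfrak p)\otimes_A B$ is nonzero, then use that its spectrum is the fibre of $^a\varphi$ over $\mathfrak p$; the paper phrases this as a contradiction, while you argue directly and additionally verify the fibre identification via $S^{-1}B/\mathfrak p S^{-1}B$.
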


\begin{proof}
    For any $y\in\Spec A$, the ring homomorphism $\kappa(y)\to\kappa(y)\otimes_A B$ is injective by assumption. The affine scheme $\Spec \kappa(y)\otimes_A B$ has the underlying topological space $\{x\in\Spec B\colon \,^a\varphi(x)=y\}$. Assume $^a\varphi$ is not surjective. Then for some $y$, $$|\Spec \kappa(y)\otimes_A B|=\emptyset=|\Spec \{*\}|,$$ hence $\kappa(y)\otimes_A B=\{*\}$, but this contradicts injectivity of $\kappa(y)\to\kappa(y)\otimes_A B$.
\end{proof}

\subsection{Permanence properties}
We start with some permanence properties of crisp module maps as stated but not proven in \cite[Proposition 1.2]{Oli70}. Initially, Olivier stated these permanence properties only for ring homomorphisms in the source above. We deem it worthwhile to state them in the more general case of module maps, as this naturally extends to the case of ring homomorphisms (where the Properties (3) and (4) coincide).

\begin{Proposition}\label{permanence-crisp-rings}
    Crispness as a property of morphisms of modules over a ring $A$ has the following permanence properties:
    \begin{assertionlist}
        \item Being crisp is stable under composition.
        \item If $M\xrightarrow{u} M'$ and $M\xrightarrow{v} M''$ are maps of $A$-modules such that $v\circ u$ is crisp, then $u$ is crisp.
        \item Being crisp is stable under taking tensor products. 
         \item Being crisp is stable under base change.
         \item Being crisp is stable under crisp descent, i.e. if $A\xrightarrow{\varphi}B$ is a crisp ring homomorphism, then a map of $A$-modules $M\to M'$ is crisp if (and only if) the map  $M\otimes_A B\to M'\otimes_A B$ is crisp.
        \item Let $\mathcal{I}$ be a small filtered category and let $M, N\colon \mathcal{I}\to (A\textnormal{-Mod})$ be diagrams of $A$-modules written as $i\mapsto M_i$, resp. $i\mapsto N_i$. Consider a morphism of functors $u\colon M\to N$. If $M_i\to N_i$ is a crisp $A$-linear map for any $i\in I$, then
            \[\underset{i\in\mathcal{I}}{\colim}\,M_i\xrightarrow{\underset{i\in\mathcal{I}}{\colim}(u)} \underset{i\in\mathcal{I}}{\colim}\,N_i\]
        is a crisp $A$-linear map. If $(u_i\colon M_i\to N_i)_{i\in I}$ is a family of morphisms of $A$-modules such that $u_i$ is crisp for any $i\in I$, then the induced map of $A$-modules
            \[\bigoplus_{i\in I} u_i\colon \bigoplus_{i\in I} M_i\to \bigoplus_{i\in I} N_i\]
        is crisp.
    \end{assertionlist}
\end{Proposition}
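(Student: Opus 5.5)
Each of the six items follows directly from the definition of crispness (injectivity of $u\otimes\id_P$ for every $A$-module $P$) together with associativity of tensor products and exactness of filtered colimits. I would treat them one by one, in order.

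For (1), given crisp maps $u\colon M\to M'$ and $u'\colon M'\to M''$, the map $(u'\circ u)\otimes\id_P$ equals $(u'\otimes\id_P)\circ(u\otimes\id_P)$. This is a composite of injective maps, hence injective. For (2), I read the hypothesis as: $u\colon M\to M'$ and $v\colon M'\to M''$ with $v\circ u$ crisp. Then $(v\otimes\id_P)\circ(u\otimes\id_P)$ is injective, so its first factor $u\otimes\id_P$ is injective.

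For (3), let $u\colon M\to N$ and $u'\colon M'\to N'$ be crisp. I would factor $u\otimes u'$ as
\[M\otimes_A M'\xrightarrow{u\otimes\id} N\otimes_A M'\xrightarrow{\id\otimes u'} N\otimes_A N'.\]
The first map is crisp, because tensoring it with $P$ gives $u\otimes\id_{M'\otimes P}$ after reassociating. The second map is crisp by the symmetric argument. Part (1) then finishes the proof.

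For (4), let $A\to A'$ be any ring map. Any $A'$-module $P'$ satisfies
\[(M\otimes_A A')\otimes_{A'}P'\cong M\otimes_A P'.\]
Under these identifications $(u\otimes_A A')\otimes_{A'}\id_{P'}$ corresponds to $u\otimes_A\id_{P'}$, which is injective.

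For (5), the implication ``only if'' is (4). For ``if'', fix an $A$-module $P$. Since $\varphi$ is crisp, the maps $M\otimes_A P\to M\otimes_A P\otimes_A B$ and $M'\otimes_A P\to M'\otimes_A P\otimes_A B$ are injective. They fit into a commutative square with bottom row
\[(M\otimes_A B)\otimes_B(P\otimes_A B)\to (M'\otimes_A B)\otimes_B(P\otimes_A B).\]
This bottom row is $u\otimes\id_{P\otimes B}$ up to isomorphism, and it is injective because $u\otimes_A B$ is crisp over $B$. The composite through the upper left corner is therefore injective, so $u\otimes\id_P$ is injective.

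For (6), tensor products commute with colimits, so $(\colim u)\otimes\id_P\cong\colim(u_i\otimes\id_P)$. Filtered colimits are exact in $(A\text{-Mod})$, so a filtered colimit of injections is injective. The statement about direct sums follows in two steps. First, a finite direct sum of injections is injective, since $\otimes$ commutes with finite sums. Second, an arbitrary direct sum is the filtered colimit of its finite subsums, so the colimit statement applies.

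The only real care needed is in (5), in checking that the square commutes and that the bottom identification is the correct one. Everything else is formal.
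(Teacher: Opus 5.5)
Your proof is correct and follows the paper's argument item by item. The differences are cosmetic. In (3) you handle the tensor product $u\otimes u'$ of two crisp maps, which reduces to the paper's case $u\otimes\id_N$ plus (1). In (5) you run the same diagram chase after tensoring with a fixed $P$, whereas the paper first shows $M\otimes_A B\to M'\otimes_A B$ is crisp as an $A$-linear map and then invokes (1) and (2).
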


\begin{proof}
     (1) + (2) follow directly from the fact that compositions of injective maps are injective again and that when the composition of two maps is injective, so is the first map.
    
    (3)\quad If $M\to M'$ is a crisp map of $A$-modules, then we obtain for any $A$-modules $N$ and $P$ that                 \begin{equation*}M\otimes_A(N\otimes_A P) = (M\otimes_A N)\otimes_A P \to (M'\otimes_A N)\otimes_A P = M'\otimes_A(N\otimes_A P)\end{equation*} 
    is injective as $N\otimes_A P$ is an $A$-module. Thus, $M\otimes_A N \to M'\otimes_A N$ is crisp.    

    (4)\quad If $u\colon M\to M'$ is a crisp map of $A$-modules, $B$ a ring and $\varphi\colon A\to B$ a ring homomorphism, then we obtain for any $B$-module $P$ that the $B$-linear map
        \begin{equation*}(M\otimes_A B)\otimes_B P = M\otimes_A P\to M'\otimes_A P = (M'\otimes_A B)\otimes_B P\end{equation*} 
    is injective by crispness of $u$. Thus, $M\otimes_A B \to M'\otimes_A B$ is a crisp $B$-linear map.
    
    (5)\quad If $M\to M'$ is a crisp map of $A$-modules, then $M\otimes_A B\to M'\otimes_A B$ is a crisp $B$-linear map by (4). Now, let $M\otimes_A B\to M'\otimes_A B$ be crisp as a $B$-linear map and consider the following commutative diagram of $A$-modules
        \begin{equation*}
            \begin{tikzcd}
                M\arrow[rightarrow]{r}\arrow[rightarrow]{d}&M'\arrow[rightarrow]{d}\\
                M\otimes_A B\arrow[rightarrow]{r}& M'\otimes_A B.
            \end{tikzcd}
        \end{equation*}
    The vertical arrows are crisp as $A\to B$ is, and the lower horizontal map is crisp as well: For any $A$-module $P$, the map of $B$-modules
        \[(M\otimes_A B)\otimes_A P = (M\otimes_A B)\otimes_B (P\otimes_A B)\to (M'\otimes_A B)\otimes_B (P\otimes_A B) = (M'\otimes_A B)\otimes_A P\]
    is injective by crispness of $M\otimes_A B\to M'\otimes_A B$ and also injective as an $A$-linear map. Thus, the composition $M\to M\otimes_A B\to M'\otimes_A B$ is crisp by (1). Because the diagram commutes, we obtain that the composition $M\to M'\to M'\otimes_A B$ is crisp, with which we get that $M\to M'$ is crisp by (2).
    
    (6) is due to the fact that tensor products commute with colimits.\end{proof}

For ring homomorphisms, we furthermore have that crispness satisfies being local in the following sense:

\begin{Proposition}\label{crisp-local-rings}
    Let $A$ and $B$ be rings. A ring homomorphism $\varphi \colon A\to B$ is crisp if and only if the induced ring homomorphisms $A_\mathfrak{p}\to A_\mathfrak{p} \otimes_A B $ are crisp for all prime ideals $\mathfrak{p}\subset A$, cf. \cite[Proposition 1.2]{Oli70}.
\end{Proposition}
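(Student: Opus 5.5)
The forward direction follows from Proposition~\ref{permanence-crisp-rings}~(4). If $\varphi$ is crisp, then for each prime $\mathfrak{p}\subset A$ the base change $A\otimes_A A_\mathfrak{p}\to B\otimes_A A_\mathfrak{p}$ along $A\to A_\mathfrak{p}$ is a crisp $A_\mathfrak{p}$-linear map. This map is exactly $A_\mathfrak{p}\to A_\mathfrak{p}\otimes_A B$, so it is a crisp ring homomorphism.

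For the converse, assume every $A_\mathfrak{p}\to A_\mathfrak{p}\otimes_A B$ is crisp, and fix an $A$-module $P$. We must show that $u\colon P\to P\otimes_A B$, $x\mapsto x\otimes 1$, is injective. Injectivity of an $A$-linear map can be checked after localising at every prime, since a module $K$ with $K_\mathfrak{p}=0$ for all $\mathfrak{p}$ is zero, and localisation is exact. So it suffices to show that $u_\mathfrak{p}\colon P_\mathfrak{p}\to (P\otimes_A B)_\mathfrak{p}$ is injective for each $\mathfrak{p}$.

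We rewrite the target using associativity of the tensor product:
\[(P\otimes_A B)_\mathfrak{p}= P\otimes_A B\otimes_A A_\mathfrak{p} \cong P_\mathfrak{p}\otimes_{A_\mathfrak{p}} (A_\mathfrak{p}\otimes_A B).\]
Under this identification, $u_\mathfrak{p}$ becomes the map $P_\mathfrak{p}\otimes_{A_\mathfrak{p}} A_\mathfrak{p}\to P_\mathfrak{p}\otimes_{A_\mathfrak{p}}(A_\mathfrak{p}\otimes_A B)$ obtained by tensoring the crisp map $A_\mathfrak{p}\to A_\mathfrak{p}\otimes_A B$ with the $A_\mathfrak{p}$-module $P_\mathfrak{p}$. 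It is therefore injective by the hypothesis, and hence $u$ is injective.

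The only real point is the last step: one has to check that the identification is compatible with the maps, so that $u_\mathfrak{p}$ really is the tensored structure map. This holds because both maps send $x/s$ to $(x/s)\otimes 1$. Everything else is routine.
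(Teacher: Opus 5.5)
Your proof is correct and takes essentially the same route as the paper. The forward direction is the base-change argument; the paper unwinds it directly for an $A_\mathfrak{p}$-module $M$ via $M\otimes_{A_\mathfrak{p}}(A_\mathfrak{p}\otimes_A B)=M\otimes_A B$, which is exactly the content of Proposition~\ref{permanence-crisp-rings}~(4). The converse identifies the localised map with $P_\mathfrak{p}\otimes_{A_\mathfrak{p}}A_\mathfrak{p}\to P_\mathfrak{p}\otimes_{A_\mathfrak{p}}(A_\mathfrak{p}\otimes_A B)$ and checks injectivity locally, the only cosmetic difference being that the paper localises at maximal ideals while you use all primes.
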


\begin{proof}
    ``$\Rightarrow$'': Let $\mathfrak{p}\subset A$ be a prime ideal and $M$ an $A_\mathfrak{p}$-module. For the ring homomorphism $A_\mathfrak{p}\to A_\mathfrak{p} \otimes_A B $ to be crisp, it has to hold that 
        \begin{equation*}
            M = M\otimes_{A_\mathfrak{p}} A_\mathfrak{p}\to M\otimes_{A_\mathfrak{p}} A_\mathfrak{p} \otimes_A B = M\otimes_A B
        \end{equation*} 
    is injective. This is true because $\varphi$ is crisp by assumption.

    ``$\Leftarrow$'': Let $M$ be an $A$-module. With the crispness of $A_\mathfrak{p}\to B_\mathfrak{p}$, we get that the map
        \begin{equation*}
            M_\mathfrak{p}\otimes_{A_\mathfrak{p}} A_\mathfrak{p}\longrightarrow M_\mathfrak{p}\otimes_{A_\mathfrak{p}} B_\mathfrak{p} 
        \end{equation*}
    is injective. As
        \begin{equation*}
            M_\mathfrak{p}\otimes_{A_\mathfrak{p}} A_\mathfrak{p}\cong M\otimes_A A_\mathfrak{p}\quad\text{and}\quad
            M_\mathfrak{p}\otimes_{A_\mathfrak{p}} B_\mathfrak{p}\cong (M\otimes_A B)\otimes_A A_\mathfrak{p},
        \end{equation*}
    the map 
        \[M\otimes_A A_\mathfrak{p}\longrightarrow (M\otimes_A B)\otimes_A A_\mathfrak{p}\]
    is injective as well. This holds in particular for any maximal ideal of $A$, such that $M\to M\otimes_A B$ is injective and thus, $A\to B$ is crisp.
\end{proof}

\subsection{Equivalent characterisations of crisp ring homomorphisms}

The following lemma will be necessary for the upcoming main theorem of this subsection: 

\begin{Lemma}\label{inj-surj-iso-crisp}
    Let $\varphi\colon A\to B$ be a crisp ring homomorphism and $u\colon M\rightarrow M'$ be a map of $A$-modules such that the $B$-linear map $u\otimes \id_B$ is 
    \begin{assertionlist}
        \item injective. Then $u$ is injective.
        \item surjective. Then $u$ is surjective.
        \item an isomorphism of $B$-modules. Then $u$ is an isomorphism of $A$-modules.
    \end{assertionlist}
\end{Lemma}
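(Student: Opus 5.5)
The plan is to use the naturality square relating $u$ and $u\otimes\id_B$:
\[
\begin{tikzcd}
M\arrow{r}{u}\arrow{d}[swap]{\iota_M} & M'\arrow{d}{\iota_{M'}}\\
M\otimes_A B\arrow{r}{u\otimes\id_B} & M'\otimes_A B,
\end{tikzcd}
\]
where $\iota_N\colon N\to N\otimes_A B$, $n\mapsto n\otimes 1$. Since $\varphi$ is crisp, every $\iota_N$ is injective: $\iota_N$ is $\id_N\otimes\varphi$ under $N\cong N\otimes_A A$, and crispness says exactly that this is injective for every $A$-module $N$.

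For (1), suppose $u(m)=0$. Then $(u\otimes\id_B)(\iota_M(m))=\iota_{M'}(u(m))=0$. Injectivity of $u\otimes\id_B$ gives $\iota_M(m)=0$, and injectivity of $\iota_M$ gives $m=0$. So (1) is a diagram chase using only injectivity of $\iota_M$.

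For (2), reduce to the injectivity statement for a cokernel. Put $C=\Coker(u)$. Right exactness of $-\otimes_A B$ gives
\[
C\otimes_A B\cong \Coker(u\otimes\id_B),
\]
and this vanishes because $u\otimes\id_B$ is surjective. Since $\iota_C\colon C\to C\otimes_A B=0$ is injective, $C=0$, so $u$ is surjective. The one place needing care is this identification of the cokernel after tensoring with $B$; it is standard right exactness. This is also why the argument handles the cokernel directly rather than applying (1) to some map, which would need exactness that $B$ need not have.

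Finally, (3) follows by combining (1) and (2): $u$ is a bijective $A$-linear map, hence an isomorphism of $A$-modules.
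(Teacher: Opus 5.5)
Your proposal is correct and follows essentially the same route as the paper. Part (1) is a diagram chase in the naturality square using injectivity of $M\to M\otimes_A B$, part (2) uses right exactness to get $\Coker(u)\otimes_A B=\Coker(u\otimes\id_B)=0$ and then crispness to conclude $\Coker(u)=0$, and part (3) combines the two.
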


\begin{proof}
    (1)\quad For the injectivity of $u$, we observe that $M\hookrightarrow M\otimes_A B$ and $M'\hookrightarrow M'\otimes_A B$ are injective $A$-linear maps as $\varphi$ is crisp and that $M\otimes_A B\hookrightarrow M'\otimes_A B$ is injective as a $B$-linear (and as an $A$-linear) map by the premise of this proposition. In the commutative diagram of $A$-modules
        \[ \begin{tikzcd}
            M \arrow[hookrightarrow]{r} \arrow[swap]{d}{u} & M\otimes_A B \arrow[hookrightarrow]{d} \\%
            M' \arrow[hookrightarrow]{r}& M'\otimes_A B,
        \end{tikzcd}\]
    a diagram chase yields that $u$ has to be injective.

    (2)\quad As $u\otimes \id_B$ is surjective, the sequence of $B$-modules
        \[\Ker(u\otimes \id_B)\longrightarrow M\otimes_A B\longrightarrow M'\otimes_A B\longrightarrow 0 \]
    is exact, implying that $\Coker(u)\otimes_A B=\Coker(u\otimes \id_B)=0$. As $\varphi$ is crisp, we obtain that $\Coker(u)=0$ with which $u$ is surjective.

    (3)\quad follows immediately from (1) and (2).
\end{proof}

We now come to the main theorem of this section:

\begin{Theorem}\label{crisp-equivalences}
    For a ring homomorphism $\varphi\colon A\to B$ and its induced base change functor $\varphi^*\colon (A\textnormal{-Mod})\to (B\textnormal{-Mod})$ the following are equivalent:
    \begin{equivlist}
        \item The map $\varphi$ is crisp, i.e. for any $A$-module $M$, the induced map $M\to M\otimes_A B$ is injective.
        \item For any $A$-algebra $R$, the induced map $R\to R\otimes_A B$ is injective.
        \item For any $A$-module of finite presentation $M$, the induced map $M\to M\otimes_A B$ is injective.
        \item The functor $\varphi^*$ is faithful.
        \item For every map of $A$-modules $u\colon M\to M'$, $u$ is injective whenever $u\otimes\id_B$ is an injective $B$-linear map.
        \item For every map of $A$-modules $u\colon M\to M'$, $u$ is crisp whenever $u\otimes\id_B$ is a crisp $B$-linear map.
        \item For every map of $A$-modules $u\colon M\to M'$, $u$ is an isomorphism whenever $u\otimes\id_B$ is an isomorphism of $B$-modules.
        \item For every sequence $\mathcal{S}$ of $A$-modules
            \[0\longrightarrow M'\longrightarrow M\longrightarrow M''\longrightarrow 0,\]
        $\mathcal{S}$ is exact whenever $\varphi^*(\mathcal{S})$ is an exact sequence of $B$-modules.
        \item For any $A$-module $M$, the sequence
            \[M\longrightarrow M\otimes_A B\rightrightarrows M\otimes_A B\otimes_A B\]
        is exact.
        \item For any $A$-algebra $R$, the sequence
            \[R\longrightarrow R\otimes_A B\rightrightarrows R\otimes_A B\otimes_A B\]
        is exact.
        \item A family of $A$-modules and $A$-linear maps $(M^i,d^i\colon M^i\to M^{i+1})_{i\in \Z}$ is a complex whenever $(\varphi^*(M^i),\varphi^*(d^i))_{i\in \Z}$ is a complex of $B$-modules.
        \item For every complex $M^{\bullet}$ of $A$-modules, the induced map in cohomology
            \[H^i(M^{\bullet})\to H^i(M^{\bullet}\otimes_A B)\]
        is injective in every degree $i$.
        \item The map $\varphi$ is injective and for any $B$-module $F$ of finite presentation, the functor $\Hom(F,-)$ preserves exactness of the short exact sequence of $A$-modules
            \[0\longrightarrow A\longrightarrow B\longrightarrow \faktor{B}{A}\longrightarrow 0.\]
        \item The map $\varphi$ is injective and for all $m,n\in\N$ and all systems of $A$-linear equations $\mathcal{E}$ in the variables $(x_j)_{1\leq j\leq n}$ with $a_i\in A$ for $1\leq i\leq m$ as well as $c_{ij}\in A$ for $1\leq i\leq m, 1\leq j\leq n$
            \[\sum_{j=1}^n c_{ij}x_j = a_i\quad (1\leq i\leq m),\]
        it holds that $\mathcal{E}$ has a solution in $A$ whenever $\mathcal{E}$ has a solution in $B$.
    \end{equivlist}
\end{Theorem}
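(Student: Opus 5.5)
I would organise the proof around (i) as a hub and prove a cycle of implications for each cluster of conditions. Several directions are immediate or already available. (i)$\Rightarrow$(v), (vi), (vii) come from Lemma~\ref{inj-surj-iso-crisp} and Proposition~\ref{permanence-crisp-rings}~(5). Conversely, (v)$\Rightarrow$(i) and (vii)$\Rightarrow$(i): apply the hypothesis to $u\colon M\to 0$ or to $\Ker(M\to M\otimes_A B)\to 0$. To see this, note that $\ker(M\to M\otimes_AB)\otimes_A B\to M\otimes_A B$ vanishes, so the image of the kernel becomes zero after base change. One then argues using the identity map, which detects zero modules. For (vi)$\Rightarrow$(i), apply (vi) to $\id_A$ composed appropriately, or more simply to $A\to B$ itself viewed as an $A$-module map $u=\varphi$. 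Here $u\otimes\id_B\colon B\to B\otimes_AB$ has the retraction given by multiplication, and split injections are crisp.

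(iv)$\Leftrightarrow$(i): faithfulness means $u\neq0\Rightarrow u\otimes\id_B\neq 0$. From (i), the map $\Im(u)\to \Im(u)\otimes B$ is injective, and $M'\otimes B$ receives $\Im(u)\otimes B$ compatibly via an injective map (by crispness applied through $M'\otimes_A B$). Conversely, given $m\in\ker(M\to M\otimes B)$, the map $A\to M$, $1\mapsto m$, becomes zero after base change, so $m=0$. (xi) is the same statement as (iv), since $d^{i+1}d^i=0$ is a vanishing condition. (viii)$\Leftrightarrow$(v)+(vii), by splitting exactness into injectivity, exactness in the middle, and surjectivity. Exactness in the middle is handled as in (xii).

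(i)$\Rightarrow$(xii): for a complex, $H^i(M^\bullet)\otimes_AB\to H^i(M^\bullet\otimes_AB)$ is not an isomorphism in general. Instead, I would use that the crisp inclusion $Z^i\to M^i$ and its interaction with coboundaries give injectivity. Concretely, $M^i/B^i\to (M^i/B^i)\otimes_A B$ is injective and $(M^i/B^i)\otimes_A B=M^i\otimes B/\Im(d\otimes B)$, so a cocycle that becomes a coboundary after tensoring was already a coboundary. The converse (xii)$\Rightarrow$(i) uses the complex $0\to M\to0$.

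(ix), (x) $\Rightarrow$ (i), (ii) are trivial. Conversely, (i)$\Rightarrow$(ix) via Proposition~\ref{permanence-crisp-rings}~(5): after $\otimes_AB$ the Amitsur sequence acquires a contracting section, hence is exact, and exactness descends by (viii)/(xii) applied to the complex truncated suitably. (ii)$\Rightarrow$(i) by taking $R=A\oplus M$ the trivial square-zero extension. (i)$\Rightarrow$(iii) is trivial, and (iii)$\Rightarrow$(i) follows because every module is a filtered colimit of finitely presented ones, so Proposition~\ref{permanence-crisp-rings}~(6) applies with $N_i=M_i\otimes B$.

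(iii)$\Leftrightarrow$(xiv) is a direct translation. A system $\mathcal E$ corresponds to the finitely presented module $M=A^{m}/\text{(columns } c_{\cdot j})$ together with the class of $a=(a_i)$. The system is solvable in $A$ iff $\bar a=0$ in $M$, and solvable in $B$ iff $\bar a\otimes1=0$ in $M\otimes B$. For (xiii) I would dualize: $\Hom_A(F,-)$ exactness on $0\to A\to B\to B/A\to0$ for finitely presented $F$ says $A\to B$ is a pure monomorphism in the sense of Göbel--Trlifaj. The standard equivalence between the Hom-criterion over finitely presented modules and the tensor criterion is proved by the same linear-equation translation, with matrices transposed. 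I expect this last step to be the main obstacle, since a careful bookkeeping of which side ($A$ or $B$) the finitely presented module lives on is needed; I would reduce it to (xiv) through explicit presentations.
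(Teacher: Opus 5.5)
Your proof of (v)$\Rightarrow$(i) and (vii)$\Rightarrow$(i) does not go through as written. The same problem affects (viii)$\Rightarrow$(i), since you route it through (v) and (vii).

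\begin{itemize}
\item Applying the hypothesis to $M\to 0$ only shows that $M\otimes_A B=0$ forces $M=0$. That is much weaker than crispness.
\item Applying it to $K\to 0$, where $K=\Ker(M\to M\otimes_A B)$, requires $K\otimes_A B=0$. Your observation that $K\otimes_A B\to M\otimes_A B$ is the zero map is correct, but it does not imply this.
\end{itemize}

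The map to test is the quotient $q\colon M\to M/K$. By right exactness, $K\otimes_A B\xrightarrow{0}M\otimes_A B\to (M/K)\otimes_A B\to 0$ is exact, so $q\otimes\id_B$ is an isomorphism. Then (v) or (vii) applied to $q$, or (viii) applied to $0\to 0\to M\xrightarrow{q} M/K\to 0$, gives $K=0$.

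The paper argues differently. For (v) it applies the hypothesis to the unit $M\to M\otimes_A B$, whose base change is split injective via the multiplication $B\otimes_A B\to B$. For (vii) it proves faithfulness (iv): $u\otimes\id_B=0$ turns $N\otimes_A B\to\Coker(u)\otimes_A B$ into an isomorphism, hence $u=0$.

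There are two smaller slips.
\begin{itemize}
\item In (i)$\Rightarrow$(iv), the map $\Im(u)\otimes_A B\to M'\otimes_A B$ need not be injective without flatness. For example, take $B=\Z\oplus\Z/2$ square-zero over $\Z$ and $u$ multiplication by $2$. You do not need it: $(u\otimes\id_B)(m\otimes 1)=u(m)\otimes 1\neq 0$ by injectivity of $M'\to M'\otimes_A B$.
\item In (iii)$\Rightarrow$(i), Proposition~\ref{permanence-crisp-rings}~(6) requires the maps $M_i\to M_i\otimes_A B$ to be crisp, but (iii) only makes them injective. Use exactness of filtered colimits directly, as the paper does.
\end{itemize}

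With these repairs your argument is correct and more self-contained than the paper's. The paper cites Andr\'e--Fiorot for (ii), (iv), (ix), (x), Hochster--Roberts for (xii), and G\"obel--Trlifaj for (xiii), (xiv). Your direct arguments are sound:
\begin{itemize}
\item (i)$\Rightarrow$(xii) via injectivity of $M^i/\Im(d^{i-1})\to (M^i/\Im(d^{i-1}))\otimes_A B$;
\item the converse via the complex $0\to M\to 0$, which is shorter than the paper's detour through (vii);
\item the square-zero extension $A\oplus M$ for (ii)$\Rightarrow$(i);
\item the presentation $A^m/\langle c_{\cdot j}\rangle$ for (iii)$\Leftrightarrow$(xiv).
\end{itemize}
Your planned reduction of (xiii) to (xiv) through presentations is the standard one. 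It works for $F$ finitely presented over $A$, which is the setting of the cited G\"obel--Trlifaj result, so your caution about which side $F$ lives on is justified.
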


\begin{proof}[Proof of Theorem \ref{crisp-equivalences}]
    $\textnormal{(i)}\Leftrightarrow \textnormal{(ii)}$\quad Cf. \cite[Proposition 2.2]{AF21}.

    $\textnormal{(i)}\Leftrightarrow \textnormal{(iii)}$\quad Follows from the facts that any $A$-module can be expressed as a filtered colimit of some modules of finite presentation and that colimits both commute with tensor products and are exact.

    $\textnormal{(i)}\Leftrightarrow \textnormal{(iv)}$\quad Cf. \cite[Proposition 2.2]{AF21}.

    $\textnormal{(i)}\Rightarrow \textnormal{(v)}$\quad Assume that $\varphi$ is crisp and let $u\colon M\to M'$ be a map of $A$-modules. Moreover, assume that $u\otimes\id_B$ is an injective $B$-linear map. Then, the injectivity of $u$ follows immediately with Lemma \ref{inj-surj-iso-crisp}.

    $\textnormal{(v)}\Rightarrow \textnormal{(i)}$\footnote{Cf. \cite{Fio24}.}\quad Assume that any $A$-linear map $u$ is injective whenever its base change $\varphi^*(u)=u\otimes \mathrm{id}_B$ is and consider the base change diagram for an arbitrary $A$-module $M$:
        \[\begin{tikzcd}
            M\arrow[r]\arrow[d] & M\otimes_A B\arrow[d]\\
            M\otimes_A B \arrow[r] & M\otimes_A (B\otimes_A B).
        \end{tikzcd}\]
    We note that $B\to B\otimes_A B$ is a split injective $A$-linear map since 
        \[(b\otimes b'\mapsto b\cdot b')\circ (b\mapsto b\otimes 1) = \id_B.\]
    As tensoring preserves injectivity for split injective maps, we get that 
        \[M\otimes_A B \longrightarrow M\otimes_A (B\otimes_A B)\]
    is injective with which $M\to M\otimes_A B$ is injective by assumption.

    $\textnormal{(i)}\Rightarrow \textnormal{(vi)}$\quad Let $\varphi$ be crisp and let $u\colon M\to M'$ be an $A$-linear map for which $u\otimes\id_B$ is a crisp $A$-linear map. Then, $u$ is crisp by Proposition \ref{permanence-crisp-rings} (5).

    $\textnormal{(vi)}\Rightarrow \textnormal{(i)}$\footnotemark[\value{footnote}]\quad We have already seen that $B\to B\otimes_A B$ is a split injective $A$-linear map. Thus, it is crisp, and so $A\to B$ is crisp.
    
     $\textnormal{(i)}\Rightarrow \textnormal{(vii)}$\quad This immediately follows from Lemma \ref{inj-surj-iso-crisp}.
    
    $\textnormal{(vii)}\Rightarrow \textnormal{(iv)}$\footnotemark[\value{footnote}]\quad Choose some morphism of $A$-modules $u\colon M\to N$ and assume that $u\otimes\id_B = 0$ as a $B$-linear map. For $\varphi^*$ to be faithful, it suffices to show that $u = 0$. For this, consider the exact sequence of $A$-modules
        \[M\overset{u}{\longrightarrow} N\overset{\pi}{\longrightarrow}\Coker (u)\longrightarrow 0.\]
    Tensoring with $B$ yields the exact sequence of $B$-modules
        \[M\otimes_A B\xrightarrow{u\otimes \id_B = 0} N\otimes_A B\xrightarrow{\pi\otimes \id_B}\Coker (u)\otimes_A B\longrightarrow 0.\]
    Then, $\pi\otimes \id_B$ needs to be an isomorphism of $B$-modules, so $\pi$ is an isomorphism of $A$-modules. Then, $\Im(u) = 0$ and thus $u = 0$.
    \footnotetext{Cf. \cite{Fio24}.}
    
    $\textnormal{(i)}\Rightarrow \textnormal{(viii)}$\quad Let $\varphi$ be crisp and consider a sequence $\mathcal{S}$ of $A$-modules
        \[0\longrightarrow M'\overset{u}{\longrightarrow} M\overset{v}{\longrightarrow} M''\longrightarrow 0\]
    such that the induced sequence $\varphi^*(\mathcal{S})$ of $B$-modules given by
        \[0\longrightarrow M'\otimes_A B\xrightarrow{\varphi^*(u)} M\otimes_A B \xrightarrow{\varphi^*(v)} M''\otimes_A B\longrightarrow 0\]
    is exact. As Lemma \ref{inj-surj-iso-crisp} yields that then, $u$ is injective and $v$ is surjective, it only remains to show that $\Im(u) = \Ker(v)$.
    We first note that 
        \[\Im(u)\otimes_A B = \Im(\varphi^*(u))\]
    as both terms are isomorphic as $B$-modules to $M'\otimes_A B$, which follows from the injectivity of both $u$ and $\varphi^*(u)$.\\
    Now, choose some $m\in M$ with $v(m) = 0$. Then, 
        \[0 = v(m)\otimes 1 = \varphi^*(v)(m\otimes 1)\]
    in $M''\otimes_A B$, such that $m\otimes 1$ is an element of 
        \[\Ker(\varphi^*(v)) = \Im(\varphi^*(u)) = \Im(u)\otimes_A B.\] Thus, $m$ lies in the image of $u$. \\
    Conversely, for any $m\in M$ with $m = u(m')$ for some $m'\in M'$, 
        \[m\otimes 1 = u(m')\otimes 1 = \varphi^*(u)(m'\otimes 1)\] 
    is an element of $\Im(\varphi^*(u)) = \Ker(\varphi^*(v))$. Because of the injectivity of $M\to M\otimes_A B$, $m$ now also needs to lie in $\Ker(v)$.

    $\textnormal{(viii)}\Rightarrow \textnormal{(i)}$\quad We assume that for every sequence of $A$-modules $\mathcal{S}$ as above, $\mathcal{S}$ is exact whenever $\varphi^*(\mathcal{S})$ is. Let $u\colon M\to M'$ be an $A$-linear map for which $u\otimes\id_B$ is an injective $B$-linear map. Then, the sequence of $B$-modules
        \[\begin{tikzcd}
            0\arrow[r] & M\otimes_A B \arrow[r] & M'\otimes_A B\arrow[r]&\Coker(M\otimes_A B\to M'\otimes_A B) \arrow[d, equal] & \\
            & & &\Coker(M\to M')\otimes_A B\arrow[r] & 0
        \end{tikzcd}\]
    is exact, implying that the sequence of $A$-modules
        \[0\longrightarrow M\longrightarrow M'\longrightarrow \Coker(M\to M')\longrightarrow 0\]
    is exact by assumption. Hence, $u$ is injective, which yields together with $\textnormal{(i)}\Leftrightarrow \textnormal{(iv)}$ that $f$ is crisp.

    $\textnormal{(i)}\Leftrightarrow \textnormal{(ix)}$\quad Cf. \cite[Proposition 2.2]{AF21}.

    $\textnormal{(i)}\Leftrightarrow \textnormal{(x)}$\quad Cf. \cite[Proposition 2.2]{AF21}.

    $\textnormal{(i)}\Leftrightarrow \textnormal{(xi)}$\quad Let $\varphi$ be crisp and let $(M^i,d^i:M^i\to M^{i+1})_{i\in \Z}$ be a family of $A$-modules and $A$-linear maps. It is clear that $(\varphi^*(M^i),\varphi^*(d^i))$ is a complex whenever $(M^i,d^i)$ is. Assume conversely that $(\varphi^*(M^i),\varphi^*(d^i))$ is a complex and consider the commutative diagram
        \[\begin{tikzcd}[column sep = 3.8em, row sep = 2em]
            \cdots \arrow[r]& M^i\arrow[r, "d^i"] \arrow[d, hook] & M^{i+1}\arrow[r, "d^{i+1}"]\arrow[d, hook] & M^{i+2}\arrow[r] \arrow[d,hook]& \cdots \\
            \cdots \arrow[r]& M^i\otimes_A B\arrow[r, "\varphi^*(d^i)"] & M^{i+1}\otimes_A B\arrow[r, "\varphi^*(d^{i+1})"] & M^{i+2}\otimes_A B\arrow[r] & \cdots
        \end{tikzcd}\]
    where the injectivity of the vertical arrows is due to the crispness of $\varphi$. Choose some $m\in M^i$. As
        \[0 = (\varphi^*(d^{i+1})\circ \varphi^*(d^i))(m\otimes 1) = (d^{i+1}\circ d^i)(m)\otimes 1\] 
    by assumption, the injectivity of the rightmost vertical arrow above yields $(d^{i+1}\circ d^i)(m) = 0$ such that $(M^i,d^i:M^i\to M^{i+1})$ is a complex as wished.

    Now, assume that $\varphi$ is not crisp, i.e. that there is some $A$-module $M$ such that $M\to M\otimes_A B$ is not injective. We choose some non-trivial $m\in M$ such that $m\otimes 1 = 0$. Let us consider the sequence of $A$-modules
        \[\dots\longrightarrow 0\longrightarrow 0\longrightarrow \langle m\rangle \hookrightarrow M\xrightarrow{\id_M} M\longrightarrow 0\longrightarrow 0\dots,\]
    which is not a complex of $A$-modules. Tensoring this sequence with $B$ yields the sequence
        \[\dots\longrightarrow 0\longrightarrow 0\longrightarrow \langle m\rangle\otimes_A B  \overset{0}{\hookrightarrow} M\otimes_A B\xrightarrow{\id_M\otimes\id_B} M\otimes_A B\longrightarrow 0\longrightarrow 0\dots,\]
    which is a complex of $A$-modules, as can be checked.
    
    $\textnormal{(i)}\Rightarrow \textnormal{(xii)}$\quad Cf. \cite[Corollary 6.6]{HR74}.
    
    $\textnormal{(xii)}\Rightarrow \textnormal{(vii)}$\quad Let $u\colon M\to M'$ be a map of $A$-modules such that $u\otimes\id_B$ is an isomorphism of $B$-modules. We obtain complices of $A$-modules
        \[\dots\longrightarrow 0\longrightarrow M\overset{u}{\longrightarrow} M'\longrightarrow 0\longrightarrow\dots\quad\textnormal{and}\]
        \[\dots\longrightarrow 0\longrightarrow M\otimes_A B\xrightarrow{u\otimes\id_B} M'\otimes_A B\longrightarrow 0\longrightarrow\dots\]
    where the latter complex has trivial cohomology everywhere by virtue of the isomorphism $u\otimes\id_B$ of $B$-modules, which is also an isomorphism of $A$-modules. Our assumption now yields that the former complex also has trivial cohomology everywhere, making $u$ into an isomorphism as wished.

    $\textnormal{(i)}\Leftrightarrow \textnormal{(xiii)} \Leftrightarrow \textnormal{(xiv)}$\quad  Cf. \cite[Definition 1.2.4 and Lemma 1.2.13]{GT06}.
\end{proof}

\subsection{Some examples}
The following examples show in particular that there are crisp ring homomorphisms which are not faithfully flat or not split injective.
\begin{Example}\label{crisp-examples}
    \begin{assertionlist}
        \item Let $A$ be a ring. We consider the inclusion $\iota\colon A\hookrightarrow A[[X]]$. Clearly, this map is a split injection as the composition
            \[\begin{tikzcd}[column sep = 6em]
                A\arrow[r, hook, "a\mapsto a"]& A[[X]]\arrow[r,"\sum_{\N}a_iX^i\,\mapsto\, a_0"] & A
            \end{tikzcd}\]
        is the identity on $A$. It is easy to see that this implies crispness of $\iota$ as tensoring is exact for split exact sequences. Of course, this construction also works to show crispness of $A\to A[(X_i)_{i\in I}]$ or of $A\to A[[(X_i)_{i\in I}]]$.
        \item Let $A$ be a ring and consider the ring homomorphism
            \[A\xrightarrow{\varphi} \prod_{\mathfrak{p}\in\Spec A}A_{\mathfrak{p}}.\]
        We claim that $\varphi$ is crisp. Using Proposition \ref{crisp-local-rings}, this map is crisp if and only if for any prime ideal $\mathfrak{q}$ in $A$, the induced map 
            \[A_{\mathfrak{q}}\xrightarrow{\varphi_{\mathfrak{q}}}\left(\prod_{\mathfrak{p}\in\Spec A}A_{\mathfrak{p}}\right)_{\mathfrak{q}}\]
        is crisp. We note that the composition
            \[A_{\mathfrak{q}}\xrightarrow{\varphi_{\mathfrak{q}}}\left(\prod_{\mathfrak{p}\in\Spec A}A_{\mathfrak{p}}\right)_{\mathfrak{q}}\xrightarrow{\mathrm{proj}_{\mathfrak{q}}}\left(A_{\mathfrak{q}}\right)_{\mathfrak{q}} = A_{\mathfrak{q}}\]
        is an isomorphism and thus crisp. Proposition \ref{permanence-crisp-rings} (2) now yields that 
            \[A_{\mathfrak{q}}\xrightarrow{\varphi_{\mathfrak{q}}}\left(\prod_{\mathfrak{p}\in\Spec A}A_{\mathfrak{p}}\right)_{\mathfrak{q}}\]
        is crisp which means that $\varphi$ is as well. 
        \item Let $A$ be a ring and $M$ an $A$-module that is not flat (e.g. $A/I$ for an ideal
        $I\subset A$ where $I^2\neq I$). We set $B\coloneqq A\oplus M$. Then, $B$ is an $A$-module, and we can endow it with a
        ring structure as usual: The multiplication map 
            \begin{equation*}
                B\times B\to B,\,\,(a,m)\cdot (a',m')\coloneqq (a\cdot a', a\cdot m'+a'\cdot m) 
                \end{equation*}
        turns $B$ into a ring with multiplicative identity $(1,0)$. Furthermore, the map
            \begin{equation*}
                \varphi\colon A\to B,\quad a\mapsto (a,0)
            \end{equation*}
        turns $B$ into an $A$-algebra. As the projection $\psi\colon B=A\oplus M\to A$ is a ring homomorphism for which it holds that $\psi\circ \varphi = \id_A$, $\varphi$ is a split injection and thus crisp.  However, $\varphi$ is not (faithfully) flat: As $M$ is not a flat $A$-module, there is some exact sequence of $A$-modules
            \begin{equation*}
                0\longrightarrow N'\overset{u}{\longrightarrow} N\longrightarrow N''\longrightarrow 0
            \end{equation*}
        for which 
            \begin{equation*}
                0\longrightarrow N'\otimes_A M\xrightarrow{u\otimes\id_M} N\otimes_A M\longrightarrow N''\otimes_A M\longrightarrow 0
            \end{equation*}
        is not exact, i.e. for which $u\otimes\id_M$ is not injective. Now, assume that $\varphi$ is flat, i.e. that $A\oplus M$ is a flat $A$-algebra. Then, it is a flat $A$-module such that this time, the sequence
            \begin{equation*}
                0\longrightarrow N'\otimes_A (A\oplus M)\xrightarrow{u\otimes(\id_A,\id_M)} N\otimes_A (A\oplus M)\longrightarrow N''\otimes_A (A\oplus M)\longrightarrow 0
            \end{equation*}
        is indeed exact which means that the map $u\otimes(\id_A,\id_M)$ is injective. Since tensor products commute with direct sums, we obtain that the map $(u\otimes\id_A,u\otimes\id_M)$ is injective. This holds only if $u\otimes\id_M$ is injective, which contradicts the fact that $M$ is not flat. Hence, $A\oplus M$ is not a flat $A$-module, implying that $\varphi$ is not faithfully flat but certainly crisp.\footnote{I would like to thank StackExchange user \href{https://math.stackexchange.com/users/245104/mohan?tab=profile}{Mohan} for this \href{https://math.stackexchange.com/questions/4714001/example-of-a-universally-injective-ring-morphism-which-is-not-faithfully-flat/4714233\#4714233}{example}.}
        \item Let us consider the exact sequence of $\Z$-modules 
            \begin{equation*}
                0\longrightarrow \bigoplus_{n\in\N}\Z \overset{u}{\longrightarrow} \prod_{n\in\N}\Z\overset{v}{\longrightarrow} \Coker(u)\longrightarrow 0.
            \end{equation*}
        It is easy to show that all of the modules above are flat $\Z$-modules as $\Z$ is a principal ideal domain, and they are torsion-free. Then, \cite[\href{https://stacks.math.columbia.edu/tag/058M}{Tag 058M}]{stacks-project} yields that the sequence is universally exact, i.e. that the map $u$ is crisp. We show that $u$ cannot be a split injection: Assuming that $u$ is indeed split injective, we obtain that the sequence above is split exact. Then, $v$ must be split surjective. Now, consider 
            \[x\coloneqq v(2,2^2,2^3,\dots)\neq 0\in \Coker(u).\] 
        Because it holds that
            \[x = v(0,0,\dots,0,2^{n},2^{n+1},2^{n+2},\dots)\] 
        in $\Coker(u)$ for any $n\in\N$, we obtain that the image of $x$ under any map $$w\colon \Coker(u)\to \prod_{n\in\N}\Z$$needs to be zero as $2^n\,|\,w(x)$ for any $n\in\N$, contradicting the fact that $v$ is split surjective. Thus, $u$ is a crisp module map that is not split injective, cf. \cite[\href{https://stacks.math.columbia.edu/tag/058N}{Tag 058N}]{stacks-project}.
    \end{assertionlist}
\end{Example}

\subsection{Crisp descent for modules and algebras}

This subsection studies the properties of rings, ring algebras and modules which descend along crisp ring homomorphisms. This problem has been investigated before, e.g. by Bachuki Mesablishvili (\cite{Mes00}, \cite{Mes02}), Gerhard Angermüller (\cite{Ang20}) and the Stacks Project authors (\cite[\href{https://stacks.math.columbia.edu/tag/08WE}{Tag 08WE}]{stacks-project}). We will compile their respective propositions and add two of our own: that being an integral or a smooth ring algebra descends along crisp ring homomorphisms. 

\begin{Proposition}\label{descent-module}
    Let $M$ be an $A$-module and $\varphi\colon A\to B$ be crisp. 
    \begin{assertionlist}
        \item If $M\otimes_A B$ is a finitely generated (resp. finitely presented) $B$-module, then $M$ is a finitely generated (resp. finitely presented) $A$-module.
        \item If $M\otimes_A B$ is (faithfully) flat as a $B$-module, then $M$ is (faithfully) flat as an $A$-module.
        \item If $M\otimes_A B$ is a vector bundle as a $B$-module, then $M$ is a vector bundle as an $A$-module.
        \item If $M\otimes_A B$ is a projective (not necessarily finitely generated) $B$-module, then $M$ is a projective $A$-module.
    \end{assertionlist}
\end{Proposition}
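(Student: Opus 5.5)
I will treat the four assertions separately. Two tools do most of the work: Lemma~\ref{inj-surj-iso-crisp}, which reflects injectivity, surjectivity and isomorphisms along $\varphi^*$, and Proposition~\ref{permanence-crisp-rings}. The harder statements (projectivity, and partly finite presentation) are the classical crisp-descent results of Mesablishvili and the Stacks Project \cite[\href{https://stacks.math.columbia.edu/tag/08WE}{Tag 08WE}]{stacks-project}. I expect to cite those where the direct argument becomes long.

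For (1), I write $M$ as the filtered union of its finitely generated submodules $M_\lambda$. Base change is exact on filtered colimits, so the images of the $M_\lambda\otimes_A B$ form a filtered system of submodules exhausting $M\otimes_A B$. Since $M\otimes_A B$ is finitely generated, some $M_\lambda\otimes_A B\to M\otimes_A B$ is surjective. Lemma~\ref{inj-surj-iso-crisp}~(2) then gives $M_\lambda=M$.

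For finite presentation, I choose a surjection $A^n\to M$ with kernel $K$. The sequence $K\otimes_A B\to B^n\to M\otimes_A B\to 0$ is exact, and $\Ker(B^n\to M\otimes_A B)$ is finitely generated because $M\otimes_A B$ is finitely presented. This kernel is the image of $K\otimes_A B$, so I again write $K$ as a filtered union of finitely generated $K_\mu$. Some $K_\mu$ then has the same image in $B^n$. Hence $M_\mu:=A^n/K_\mu\to M$ becomes an isomorphism after $\otimes_A B$, and Lemma~\ref{inj-surj-iso-crisp}~(3) shows $M\cong M_\mu$, which is finitely presented.

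For (2), let $N'\hookrightarrow N$ be injective. I need injectivity of $N'\otimes_A M\to N\otimes_A M$. By Theorem~\ref{crisp-equivalences}~(v) it suffices that this becomes injective after $\otimes_A B$. That base change identifies with
\[(N'\otimes_A B)\otimes_B(M\otimes_A B)\to (N\otimes_A B)\otimes_B(M\otimes_A B).\]
This need not be injective, since $N'\otimes_A B\to N\otimes_A B$ may fail to be injective. So I instead use the Tor criterion. For a finitely generated ideal $I$, the kernel of $I\otimes_A M\to M$ injects into its base change by crispness of $\varphi$. I then must show that this base change vanishes, using flatness of $M\otimes_A B$ together with the crisp (not flat) map $I\otimes_A B\to B$.

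This is the main obstacle. My plan is to exploit that the composite $I\otimes_A M\to M\to M\otimes_A B$ factors through $(I\otimes_A B)\otimes_B(M\otimes_A B)$, and to use that $M\otimes_A B$ is flat over $B$. If this does not close, I fall back on the equational criterion of flatness, or on Mesablishvili's result. For the faithful part, if $M\otimes_A P=0$ then $(M\otimes_A B)\otimes_B(P\otimes_A B)=0$. Faithful flatness gives $P\otimes_A B=0$, and crispness of $\varphi$ gives $P=0$.

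For (3), a vector bundle means finite projective, that is, finitely presented and flat. So (3) follows from (1) and (2). For (4), I will invoke the Raynaud--Gruson style argument as in the Stacks Project: by (2) $M$ is flat. Projectivity of a flat module is equivalent to being a direct sum of countably generated modules together with the Mittag-Leffler property. Mittag-Leffler descends along crisp maps via Theorem~\ref{crisp-equivalences}~(v) applied to the maps $M\otimes\prod Q_i\to\prod(M\otimes Q_i)$. The countable decomposition is the delicate step, and there I cite \cite[\href{https://stacks.math.columbia.edu/tag/08WE}{Tag 08WE}]{stacks-project}.
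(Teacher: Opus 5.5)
\textbf{Verdict.} Your part (1) is correct and self-contained. The paper proves nothing here and only cites \cite[Tag 08XD]{stacks-project}, \cite{Mes02} and \cite{Ang20}; your argument for (1) is the standard one behind the Stacks reference. The genuine gap is the flatness half of (2). Your (3), and the flatness step in (4), depend on it.

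\textbf{Why your plan for (2) fails.} You factor $I\otimes_A M\to M\to M\otimes_A B$ through $(I\otimes_A B)\otimes_B(M\otimes_A B)$. The first map is injective, but the second is $(I\otimes_A B\to B)\otimes_B(M\otimes_A B)$. Since $M\otimes_A B$ is $B$-flat, its kernel is $\mathrm{Tor}_1^A(A/I,B)\otimes_B(M\otimes_A B)$, which is nonzero in general. Your claim that $I\otimes_A B\to B$ is crisp is false; the map need not even be injective. Take the split, hence crisp, map $A=k[X]\to B=A\oplus A/(X)$ of Example~\ref{crisp-examples}~(3) and $I=(X)$. Then $I\otimes_A B\cong I\oplus I/XI$, and the summand $I/XI\cong k$ maps to $X\cdot A/(X)=0$. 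So flatness of $M\otimes_A B$ alone gives nothing along this route. Falling back on a citation, as your proposal suggests, does not prove the step.

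\textbf{The missing idea.} Replace $I\otimes_A B$ by the ideal $IB\subseteq B$. The same composite also factors as
\[
I\otimes_A M\longrightarrow IB\otimes_A M = IB\otimes_B (M\otimes_A B)\longrightarrow B\otimes_B(M\otimes_A B)=M\otimes_A B .
\]
The second arrow is injective, because $IB\to B$ is injective and $M\otimes_A B$ is $B$-flat. So it suffices to show that the $A$-linear inclusion $I\hookrightarrow IB$ is crisp, and this follows from Theorem~\ref{crisp-equivalences}~(xiv).
\begin{itemize}
\item Take $N=A^r/L$ finitely presented, with $I=(i_1,\dots,i_p)$ and $L=\langle \ell_1,\dots,\ell_s\rangle$.
\item The kernel of $I\otimes_A N=I^r/IL\to IB\otimes_A N=(IB)^r/(IB)L$ is $(I^r\cap (IB)L)/IL$.
\item An element $v=\sum_{j,k} b_{jk}\, i_j\ell_k\in A^r$ with $b_{jk}\in B$ is a solution over $B$ of the linear system $\sum_{j,k}x_{jk}\, i_j\ell_k=v$, whose data lie in $A$.
\item By (xiv) the system is solvable over $A$, so $v\in IL$.
\item Arbitrary $N$ follow by filtered colimits.
\end{itemize}
Hence $I\otimes_A M\to M$ is injective for every finitely generated ideal $I$, so $M$ is flat. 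Your faithfulness argument and your reduction of (3) to (1) and (2) then go through.

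\textbf{A smaller point in (4).} Mittag-Leffler descent does not follow from Theorem~\ref{crisp-equivalences}~(v) as you state it, because $(\prod Q_i)\otimes_A B\neq\prod(Q_i\otimes_A B)$. Instead, note that $\prod Q_i\to\prod(Q_i\otimes_A B)$ is crisp: products of crisp injections are crisp, by the $\Hom(P,-)$-criterion for finitely presented $P$. Then factor
\[
M\otimes_A\prod Q_i\hookrightarrow (M\otimes_A B)\otimes_B\prod(Q_i\otimes_A B)\hookrightarrow\prod(M\otimes_A Q_i\otimes_A B).
\]
The countable-decomposition step remains a citation in your proposal, just as the whole statement is in the paper.
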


\begin{proof}
    Cf. \cite[\href{https://stacks.math.columbia.edu/tag/08XD}{Tag 08XD}]{stacks-project}, \cite[Theorem 3.2]{Mes02} and \cite{Ang20}.
\end{proof}

\begin{Proposition}\label{descent-algebras}
    Let $\varphi\colon A\to B$ be a crisp ring homomorphism and let $R$ be an $A$-algebra via any ring homomorphism $A\to R$. If $R\otimes_A B$ is a finite $B$-algebra (resp. a $B$-algebra of finite type, resp. of finite presentation) and $\varphi\colon A\to B$ is crisp, then $R$ is a finite $A$-algebra (resp. an $A$-algebra of finite type, resp. of finite presentation).
\end{Proposition}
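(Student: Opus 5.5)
The plan is to handle the three cases in turn, always by the same device: write $R$ as a filtered colimit (filtered union) of subalgebras, use finiteness over $B$ to see that after base change the colimit stabilises, and then descend that stabilisation along $\varphi$ with Lemma \ref{inj-surj-iso-crisp}. For the finite case, note that $R\otimes_A B$ finite as a $B$-algebra means it is finitely generated as a $B$-module, hence $R$ is finitely generated as an $A$-module by Proposition \ref{descent-module} (1). That already gives a finite $A$-algebra.

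For finite type, write $R=\colim_\lambda R_\lambda$ as the filtered union of its finitely generated $A$-subalgebras. Tensoring is exact on filtered colimits, so $R\otimes_A B=\colim_\lambda R_\lambda\otimes_A B$. Take finitely many generators of $R\otimes_A B$ over $B$. Each is a finite sum of elements $r\otimes b$, so only finitely many elements of $R$ occur, and all of them lie in a single $R_\lambda$. Hence $R_\lambda\otimes_A B\to R\otimes_A B$ is surjective. By Lemma \ref{inj-surj-iso-crisp} (2), the inclusion $R_\lambda\to R$ is surjective, so $R=R_\lambda$ is of finite type.

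For finite presentation, first apply the finite type case to choose a surjection $P=A[x_1,\dots,x_n]\to R$ with kernel $I$. Since $\varphi$ is crisp it is in particular injective, and base change gives the exact sequence $0\to I\otimes_A B\to\ldots$. More carefully, $I\otimes_A B\to P\otimes_A B=B[x]$ has image $IB[x]$, which is the kernel of $B[x]\to R\otimes_A B$ by right exactness. Because $R\otimes_A B$ is finitely presented and $B[x]\to R\otimes_A B$ is a surjection from a finitely presented algebra, the standard lemma shows that this kernel is finitely generated as an ideal. Write $I=\colim I_\mu$ over its finitely generated subideals. Then $IB[x]$ is generated by finitely many elements, each coming from some $I_\mu\otimes_A B$, so for a large enough $\mu$ the map $P/I_\mu\otimes_A B\to P/I\otimes_A B$ is an isomorphism. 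Lemma \ref{inj-surj-iso-crisp} (3), applied to the $A$-linear map $P/I_\mu\to P/I$, then gives $R\cong P/I_\mu$, which is finitely presented.

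The main obstacle is this last step. One has to check carefully that the base change of $P/I_\mu\to P/I$ is indeed injective, i.e. that its kernel $(I/I_\mu)\otimes_A B$ maps to zero in $(P/I_\mu)\otimes_A B$. That holds because the image of $I\otimes_A B$ in $B[x]$ equals the image of $I_\mu\otimes_A B$. One also has to quote correctly the fact that any presentation of a finitely presented algebra by a polynomial ring in finitely many variables has a finitely generated kernel (Stacks, Tag 00R2). Alternatively, this case can be cited as \cite[\href{https://stacks.math.columbia.edu/tag/08XD}{Tag 08XD}]{stacks-project} and \cite{Mes02}.
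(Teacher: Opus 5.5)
Your proof is correct and is essentially the standard argument behind Stacks Project Tag 08XE, which is the paper's only justification (it cites the tag without further argument). For finite type you descend surjectivity of $R_\lambda\otimes_A B\to R\otimes_A B$; for finite presentation you descend the isomorphism $(P/I_\mu)\otimes_A B\to R\otimes_A B$ via Lemma~\ref{inj-surj-iso-crisp}; and the finite case comes from Proposition~\ref{descent-module}.

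Two cosmetic fixes. First, delete the aside asserting an exact sequence $0\to I\otimes_A B\to\cdots$: crispness of $\varphi$ does not make $I\otimes_A B\to B[x_1,\dots,x_n]$ injective, though you rightly never use it. Second, the paper cites Tag 08XE for this algebra statement, whereas Tag 08XD, which you cite, is the module version used in Proposition~\ref{descent-module}.
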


\begin{proof}
    Cf.  \cite[\href{https://stacks.math.columbia.edu/tag/08XE}{Tag 08XE}]{stacks-project}.
\end{proof}

\begin{Proposition}\label{descent-integral}
    If $R\otimes_A B$ is integral over $B$ and $\varphi\colon A\to B$ is crisp, then $R$ is integral over $A$.
\end{Proposition}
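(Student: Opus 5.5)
The plan is to reduce integrality of a single element to the vanishing of one element in a suitable quotient module. Then we apply the injectivity $Q\to Q\otimes_A B$ from Theorem~\ref{crisp-equivalences} (i) to that quotient $Q$. Since $R$ is integral over $A$ if every element is, fix $r\in R$. By assumption $r\otimes 1\in R\otimes_A B$ is integral over $B$, so there are $n\geq 1$ and $b_0,\dots,b_{n-1}\in B$ with
\[(r\otimes 1)^n = \sum_{i=0}^{n-1} b_i\,(r\otimes 1)^i = \sum_{i=0}^{n-1} r^i\otimes b_i \quad\text{in } R\otimes_A B.\]
Fix this $n$. The aim is to show that $r^n$ lies in the $A$-submodule $N\coloneqq A\cdot 1+A\cdot r+\dots+A\cdot r^{n-1}\subseteq R$. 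That membership gives exactly a monic equation of degree $n$ for $r$ over $A$.

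Let $Q\coloneqq R/N$ be the quotient $A$-module and let $\pi\colon R\to Q$ be the projection. Right exactness of $-\otimes_A B$, applied to $N\to R\to Q\to 0$, identifies $Q\otimes_A B$ with $(R\otimes_A B)/N'$. Here $N'$ is the image of $N\otimes_A B$, namely the $B$-submodule generated by $1\otimes 1, r\otimes 1,\dots, r^{n-1}\otimes 1$. The displayed equation says precisely that $r^n\otimes 1\in N'$. Hence $(\pi\otimes\id_B)(r^n\otimes 1)=\pi(r^n)\otimes 1=0$ in $Q\otimes_A B$.

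Since $\varphi$ is crisp, the map $Q\to Q\otimes_A B$, $q\mapsto q\otimes 1$, is injective by Theorem~\ref{crisp-equivalences} (i). Therefore $\pi(r^n)=0$, i.e.\ $r^n=\sum_{i<n} a_i r^i$ with $a_i\in A$, so $r$ is integral over $A$.

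The only point needing care is the identification of $Q\otimes_A B$ with the quotient of $R\otimes_A B$ by the $B$-span of the images of the powers $r^i$. This is pure right exactness of the tensor product. It does not need $N\otimes_A B\to R\otimes_A B$ to be injective, which indeed need not hold, because we only use the image $N'$. Apart from this, no obstacle is expected: the argument uses crispness only through injectivity on the single module $Q$. In particular, we never pass through the subalgebra $A[r]$, whose base change might not embed into $R\otimes_A B$.
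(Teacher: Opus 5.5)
Your proof is correct, and it takes a genuinely different route from the paper's.

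The paper argues geometrically:
\begin{itemize}
\item Since $\Spec R\to\Spec A$ is affine, integrality is equivalent to universal closedness (Stacks Project, Tag 01WM).
\item Universal closedness of $\Spec(R\otimes_A B)\to\Spec B$ is then descended along $\Spec B\to\Spec A$ via Theorem~\ref{crispt-descent}, with Proposition~\ref{equivalent-on-affines} identifying this morphism as crisp.
\item That descent rests on quasi-compact crisp morphisms being universally subtrusive, hence universally submersive. This uses Picavet's valuative criterion, Rydh's results and the descent statements of G\"ortz--Wedhorn.
\item It also needs an unstated base-change step to pass from descent of ``closed'' (what Theorem~\ref{crispt-descent} lists) to descent of ``universally closed''.
\end{itemize}

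Your argument instead works one element at a time. It uses crispness only through the injectivity of $Q\to Q\otimes_A B$ for the single module $Q=R/(A+Ar+\dots+Ar^{n-1})$. Right exactness correctly identifies $Q\otimes_A B$ with the quotient by the image of $N\otimes_A B$, without needing that map to be injective. The integral equation is precisely the statement that $r^n\otimes 1$ lies in the $B$-span of the $r^i\otimes 1$.

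What your route buys:
\begin{itemize}
\item It is elementary and self-contained.
\item It has no forward references to the scheme-theoretic chapters or to the heavy external results.
\item It gives more than the paper's conclusion: $r$ satisfies a monic equation over $A$ of the same degree $n$ as $r\otimes 1$ does over $B$.
\end{itemize}
The paper's route buys conceptual uniformity, placing integrality among the topological properties handled by Theorem~\ref{crispt-descent}, at the price of all that machinery.
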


\begin{proof}
   This proposition uses later, independent results for crisp morphisms of affine schemes: A ring homomorphism $\varphi$ is integral if and only if the corresponding morphism of affine schemes $^a\varphi$ is universally closed (cf. \cite[\href{https://stacks.math.columbia.edu/tag/01WM}{Tag 01WM}]{stacks-project}). This property descends along crisp morphisms of schemes (Theorem \ref{crispt-descent}), and a morphism of affine schemes is crisp if and only if the underlying ring homomorphism is crisp (Theorem \ref{pureqcqs-crisp}).
\end{proof}

\begin{Proposition}\label{descent-etale-etc}
    Let $\varphi\colon A\to B$ be a crisp ring homomorphism and let $R$ be an $A$-algebra via any ring homomorphism $A\to R$. If $R\otimes_A B$ is a (formally) unramified (resp. étale, resp. smooth) $B$-algebra, then $R$ is a (formally) unramified (resp. étale, resp. smooth) $A$-algebra.  
\end{Proposition}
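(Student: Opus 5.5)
Write $R_B \coloneqq R\otimes_A B$. The plan is to reduce everything to statements about modules and module maps over $R$ and $R_B$, and then descend them with Theorem \ref{crisp-equivalences}, Lemma \ref{inj-surj-iso-crisp} and Proposition \ref{descent-module}. The first step is to note that $R\to R_B$ is itself crisp, because crispness is stable under base change (Proposition \ref{permanence-crisp-rings} (4)). So we may descend $R$-modules along $R\to R_B$ as well as $A$-modules along $\varphi$. We also record that $\Omega_{R/A}\otimes_A B \cong \Omega_{R_B/B}$, and more generally that formation of the naive cotangent complex $NL_{R/A}$ commutes with the base change $A\to B$ whenever $B$ is flat. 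Since $B$ need not be flat, we avoid relying on the latter in general.

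\emph{Formally unramified.} $R$ is formally unramified over $A$ if and only if $\Omega_{R/A}=0$. We have $\Omega_{R/A}\otimes_R R_B = \Omega_{R/A}\otimes_A B\cong \Omega_{R_B/B}=0$, and $R\to R_B$ is crisp, so $\Omega_{R/A}\to \Omega_{R/A}\otimes_R R_B$ is injective by Theorem \ref{crisp-equivalences} (i). Hence $\Omega_{R/A}=0$. \emph{Unramified} means formally unramified and of finite type, and the finite type part descends by Proposition \ref{descent-algebras}.

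\emph{Smooth and étale.} For the non-formal versions we first use Proposition \ref{descent-algebras} to get that $R$ is of finite presentation over $A$. We then use the characterisation of smoothness through the naive cotangent complex. Write $R = P/I$ with $P=A[x_1,\dots,x_n]$. Then $R$ is smooth if and only if $I/I^2\to \Omega_{P/A}\otimes_P R$ is injective with projective cokernel, i.e.\ a split injection whose cokernel is $\Omega_{R/A}$ and is finite projective. After base change we get the presentation $R_B = P_B/IP_B$, and the sequence $I/I^2\otimes_A B\to \Omega_{P_B/B}\otimes R_B\to \Omega_{R_B/B}\to 0$ is right exact. Its first term surjects onto $IP_B/(IP_B)^2$, but this surjection need not be an isomorphism without flatness, and this is the main obstacle.

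To get around it, we apply Proposition \ref{descent-module} (3) to $\Omega_{R/A}$ along $R\to R_B$ to conclude that $\Omega_{R/A}$ is finite projective. Then we argue that the map $I/I^2\to\Omega_{P/A}\otimes R$ is split injective as follows. Smoothness of $R_B$ over $B$ gives a $B$-section, i.e.\ a lifting of the identity through $P_B/(IP_B)^2\to R_B$. We want to descend the \emph{existence} of a retraction, and we use Theorem \ref{crisp-equivalences} (xiii)/(xiv) for this: the existence of a splitting is a solvable system of linear equations. Because $R$ is finitely presented, after choosing generators this is a finite linear system over $A$, once we pass to the finitely generated module data. A solution over $B$ then yields a solution over $A$.

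If this linear-algebra route gets messy, the fallback is to verify the formal lifting property directly. Given a square-zero extension $C\to C/J$ of $A$-algebras and a map $R\to C/J$, the obstruction to lifting lies in $\mathrm{Ext}^1_R(NL_{R/A},J)$. We would then show that this obstruction injects after base change to $B$, using the crispness of $C\to C\otimes_A B$. Finally, étale $=$ smooth $+$ unramified, so the étale case follows from the two previous cases.
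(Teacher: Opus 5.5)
Your (formally) unramified and étale cases are fine, and more self-contained than the paper, which just cites Stacks. You show $\Omega_{R/A}\hookrightarrow\Omega_{R/A}\otimes_A B\cong\Omega_{R_B/B}=0$, finite type descends by Proposition \ref{descent-algebras}, and étale $=$ smooth $+$ unramified.

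The gap is in the smooth case, exactly at the obstacle you flag yourself. Descending ``$\Omega_{R/A}$ finite projective'' via Proposition \ref{descent-module} is correct, but it does not touch that obstacle. Everything then rests on the unproved claim that the splitting is ``a solvable system of linear equations'' which becomes finite over $A$ ``once we pass to the finitely generated module data''.

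Suppose you set up the retraction of $I/I^2\to\Omega_{P/A}\otimes_P R$ as a linear system over $R$, with unknowns in the module $I/I^2$, and descend along the crisp map $R\to R_B$. The solution that smoothness of $R_B$ provides lives in $IP_B/(IP_B)^2$, which is only a quotient of $I/I^2\otimes_R R_B$. Lifted to $I/I^2\otimes_R R_B$, it satisfies the equations only modulo the kernel. So it is not a solution of the base-changed system, and nothing descends.

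Your fallback has the same defect. The kernel of $C\otimes_A B\to (C/J)\otimes_A B$ is the image of $J\otimes_A B$, not $J\otimes_A B$ itself. You also give no mechanism by which an obstruction class in $\mathrm{Ext}^1_R(NL_{R/A},J)$ would inject into anything over $B$ without flatness.

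The missing idea is to keep the system inside the polynomial ring, where the ideal has the same generators before and after base change. Write $I=(f_1,\dots,f_m)\subseteq P$. A section of $P/I^2\to R$ exists if and only if there are $c_{ik},d_{jkl}\in P$ with
\[\sum_{i,k}\frac{\partial f_j}{\partial x_i}\,f_k\,c_{ik}-\sum_{k,l}f_kf_l\,d_{jkl}=-f_j\qquad(1\le j\le m).\]
This is a finite linear system with coefficients in $P$.

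Since $IP_B$ is generated by the $f_k$ and $(IP_B)^2$ by the $f_kf_l$, the section $R_B\to P_B/(IP_B)^2$ coming from smoothness of $R_B$ gives a solution in $P_B=P\otimes_A B$. The map $P\to P_B$ is crisp by Proposition \ref{permanence-crisp-rings} (4), so Theorem \ref{crisp-equivalences} (xiv) yields a solution in $P$. Alternatively, bound the degrees of the $B$-solution to get a genuinely finite system over $A$ and apply (xiv) to $\varphi$.

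So $R$ is formally smooth (by the standard section criterion) and of finite presentation, hence smooth. Your descent of $\Omega_{R/A}$ is then superfluous.

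The paper avoids this bookkeeping altogether. It uses the criterion flat $+$ finitely presented $+$ smooth fibres, and descends the first two properties by Propositions \ref{descent-module} and \ref{descent-algebras}. On fibres, the base change is along a field extension $\kappa(a)\to\kappa(b)$, which exists by Lemma \ref{crisp-ring-hom-surj}. A field extension is faithfully flat, so ordinary fpqc descent applies.

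Your repaired linear-algebra route is more elementary and never needs flatness of $R$, at the cost of writing out the explicit equations above.
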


\begin{proof}
    The proofs for the properties ``(formally) unramified'' and ``étale'' can be found in  \cite[\href{https://stacks.math.columbia.edu/tag/08XE}{Tag 08XE}]{stacks-project}.
    
    For smoothness, we show that if $\Spec R\otimes_A B$ is a smooth affine scheme over $\Spec B$, then $\Spec R$ is a smooth affine scheme over $\Spec A$. For the definition of crispness for affine schemes, we refer to Corollary \ref{equivalent-on-affines}. Let $\Spec R\otimes_A B$ be a smooth affine scheme over $\Spec B$. By \cite[\href{https://stacks.math.columbia.edu/tag/01V8}{Tag 01V8}]{stacks-project}, it suffices to show that 
    \begin{assertionlist}
      \item $\Spec R\to \Spec A$ is flat.
      \item $\Spec R\to \Spec A$ is locally of finite presentation.
       \item All scheme-theoretic fibres $(\Spec R)_a$ with $a\in \Spec A$ are smooth.
    \end{assertionlist}

    Statements (1) and (2) immediately follow from the respective statements for module maps, cf. Proposition \ref{descent-module}.

    For (3) we pursue the following strategy: We note that the fibres $(\Spec R\otimes_A B)_b$ for $b\in \Spec B$ are smooth over $\kappa(b)$. By surjectivity of $^a\varphi$ (cf. Lemma \ref{crisp-ring-hom-surj}), for any $a\in\Spec A$ there is some $b\in\Spec B$ with an induced map $\Spec \kappa(b)\to \Spec \kappa(a)$. This map is faithfully flat and since 
        \[(\Spec R\otimes_A B)_b=(\Spec R)_a\otimes_{\Spec \kappa(a)}\Spec \kappa(b),\]
    smoothness of the fibres $(\Spec R)_a$ now follows from faithfully flat descent of smoothness.
\end{proof}
    \newpage

\section{On crisp morphisms of schemes}\label{chap::three}

This chapter is dedicated to the new notion of crisp morphisms as in Definition \ref{crisp-morphism}. Here, we will prove Theorems \ref{ppintro} and \ref{crisp-context} as stated in the introduction. These theorems enable the reader to work with crisp morphisms of schemes as with other scheme morphisms.

Let us briefly recall the definitions of locally quasi-compact surjective morphisms and crisp morphisms of quasi-separated schemes. 

\begin{Lemma}\label{loc-qc-surjective}
    Let $f\colon X\to Y$ be a surjective morphism of schemes. Then, the following are equivalent (and hereafter referred to as being \textnormal{locally quasi-compact surjective} or \textnormal{lqcs}):
    \begin{equivlist}
        \itemsep0em 
        \item Every quasi-compact open subset of $Y$ is the image of a quasi-compact open subset of $X$.
        \item There is a covering $(V_i)_{i\in I}$ of $Y$ by open affine subschemes such that each $V_i$ is the image of a quasi-compact open subset of $X$. 
        \item For all $x\in X$, there is an open neighbourhood $U_x$ of $x$ in $X$ such that $f(U_x)$ is open in $Y$ and the restriction $U_x\to f(U_x)$ is quasi-compact.
        \item For all $x\in X$, there is a quasi-compact open neighbourhood $U$ of $x$ in $X$ such that $f(U)$ is open and affine in $Y.$
    \end{equivlist}
\end{Lemma}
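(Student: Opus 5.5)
I will prove the equivalences through the cycle (i)$\Rightarrow$(ii)$\Rightarrow$(iv)$\Rightarrow$(i), together with (iii)$\Leftrightarrow$(iv). The one technical tool is the following observation. Let $U\subseteq X$ be quasi-compact open with $f(U)=V$ affine, and let $g\in\O_Y(V)$. Then $(f|_U)^{-1}(D(g))$ is the non-vanishing locus $U_{f^\sharp g}$. Cover $U$ by finitely many affines $U_j$. On each, $U_{f^\sharp g}\cap U_j$ is a principal open of $U_j$ and hence quasi-compact. So $(f|_U)^{-1}(D(g))$ is quasi-compact. This needs no separatedness hypotheses on $X$ or $Y$. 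Since every quasi-compact open of the affine scheme $V$ is a finite union of principal opens, it follows that $U\to V$ is a quasi-compact morphism.

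(i)$\Rightarrow$(ii): open affines are quasi-compact, so any affine open cover of $Y$ works.

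(ii)$\Rightarrow$(iv): let $x\in X$ and put $y=f(x)\in V_i=f(W_i)$, with $W_i$ quasi-compact open. Choose an affine open neighbourhood $U''$ of $x$ inside $f^{-1}(V_i)$ and set $U=W_i\cup U''$. Then $U$ is quasi-compact, contains $x$, and $f(U)=V_i$ because $f(U'')\subseteq V_i=f(W_i)$.

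(iv)$\Rightarrow$(iii): the observation shows that $U\to f(U)$ is quasi-compact for the $U$ given by (iv).

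(iii)$\Rightarrow$(iv): given $U_x$, choose an affine open $V'\subseteq f(U_x)$ containing $f(x)$. Its preimage in $U_x$ is quasi-compact, since $U_x\to f(U_x)$ is quasi-compact and $V'$ is quasi-compact. This preimage contains $x$. Its image is $V'$, because $f|_{U_x}$ maps onto $f(U_x)\supseteq V'$.

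(iv)$\Rightarrow$(i): this is the step I expect to need the most care, since $Y$ is not assumed quasi-separated. In particular, intersections such as $f(U)\cap W$ need not be quasi-compact, so I will avoid them. Let $W\subseteq Y$ be quasi-compact open. For each $y\in W$, surjectivity of $f$ gives some $x$ with $f(x)=y$. Then (iv) gives a quasi-compact open $U\ni x$ with $f(U)=V$ affine. Choose a principal open $D(g)\subseteq V\cap W$ of $V$ containing $y$. By the observation, $(f|_U)^{-1}(D(g))$ is quasi-compact open, and it maps onto $D(g)$. These $D(g)$ cover $W$, so finitely many suffice. The union of the corresponding finitely many quasi-compact preimages is a quasi-compact open of $X$ whose image is exactly $W$.
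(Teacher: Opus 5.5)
Your proof is correct. Each implication checks out, and your observation is the right tool: a morphism from a quasi-compact scheme to an affine scheme is quasi-compact, which you show using the non-vanishing loci of $f^\sharp g$ on a finite affine cover. That fact needs no quasi-separatedness hypothesis, and this is exactly what lets your (iv)$\Rightarrow$(i) step work for an arbitrary $Y$.

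The paper gives no argument of its own; it only cites \cite[Proposition 2.33]{FA05}. That proof runs along essentially the same lines: enlarge by an affine neighbourhood, shrink to small affine opens of the image, and use quasi-compactness of the target open. Your write-up is therefore a faithful, self-contained version of the cited argument.
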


\begin{proof}
    Cf. \cite[Proposition 2.33]{FA05}.
\end{proof}

\begin{Remark}
    Being locally quasi-compact surjective is stable under composition and local on the target (cf. \cite[Proposition 2.35]{FA05}) as well as stable under base change: 
    
    Let $f\colon X\to Y$ be an lqcs morphism of schemes and let $Y'\to Y$ be another scheme morphism. We choose an open affine covering $(V_i)_{i\in I}$ of $Y$ as in (ii) of Lemma \ref{loc-qc-surjective}. We obtain an open covering $(g^{-1}(V_i))_{i\in I}$ of $Y'$ which can be refined to an open affine covering $(V'_j)_{j\in J}$ of $Y'$.
    Any affine open $V'_j$ now maps into some affine open $V_i\subseteq Y$ and by assumption, there is some quasi-compact open $U_i\subseteq X$ with $f(U_i)=V_i$. Then, $U_i\times_{V_i} V_j'\subseteq X\times_Y Y'$ is a quasi-compact open with $f'(U_i\times_{V_i} V_j')=V_j'.$
\end{Remark}

\begin{Remark}
    Any quasi-compact and surjective morphism of schemes and any faithfully flat morphism locally of finite presentation is locally quasi-compact surjective.
\end{Remark}

\begin{Definition}\label{crisp-morphism}
    A surjective morphism of quasi-separated schemes $f\colon X\to Y$ is called \textit{crisp}, if it is locally quasi-compact surjective and satisfies the following equivalent properties:
    \begin{equivlist}
        \item for every open affine subscheme $V\subseteq Y$, there is a quasi-compact open $U\subseteq f^{-1}(V)$ with $f(U) = V$ such that there is a finite affine open covering $U_{1},\dots, U_{r}$ of $U$ for which the ring homomorphism corresponding to the morphism of affine schemes
            \[\bigsqcup_{i=1} ^r U_{i}\longrightarrow V\]
        is crisp.
        \item for every open affine subscheme $V\subseteq Y$, there is a quasi-compact open $U\subseteq f^{-1}(V)$ with $f(U) = V$ such that for every finite affine open covering $U_{1},\dots, U_{r}$ of $U$, the ring homomorphism corresponding to the morphism of affine schemes
            \[\bigsqcup_{i=1} ^r U_{i}\longrightarrow V\]
        is crisp.
        \item there is an open affine covering $(V_j)_{j\in J}$ of $Y$ such that for all $j$, there is a quasi-compact open $U_j\subseteq f^{-1}(V_j)$ with $f(U_j) = V_j$ such that there is a finite open affine covering $U_{j,1},\dots, U_{j,r_j}$ of $U_j$ for which the ring homomorphism corresponding to the morphism of affine schemes
            \[\bigsqcup_{i=1} ^{r_j} U_{j,i}\longrightarrow V_j\]
        is crisp.
        \item there is an open affine covering $(V_j)_{j\in J}$ of $Y$ such that for all $j$, there is a quasi-compact open $U_j\subseteq f^{-1}(V_j)$ with $f(U_j) = V_j$ such that for every finite open affine covering $U_{j,1},\dots, U_{j,r_j}$ of $U_j$, the ring homomorphism corresponding to the morphism of affine schemes
            \[\bigsqcup_{i=1} ^{r_j} U_{j,i}\longrightarrow V_j\]
        is crisp.
    \end{equivlist}
\end{Definition}

\begin{Remark}
    \begin{enumerate}[(1)]
        \item Note that the choice of the quasi-compact open subschemes mapping surjectively onto the affine open subschemes is made possible by $f$ being locally quasi-compact surjective. 
        \item If $f\colon X\to Y$ is crisp, this does \emph{not} imply that for any affine $U\subseteq X$ and $V\subseteq Y$ with $f(U) = V$, the ring homomorphism corresponding to $U\to V$ is crisp! 
    \end{enumerate}
\end{Remark}

\begin{Remark}[The garbage principle]\label{garbage}
    Since for any ring $A$ and $A$-algebras $B$ and $C$, the induced map $A\to B\times C$ is crisp if $A\to B$ is crisp, we can ``add any garbage'' to the image of a crisp ring homomorphism without destroying the crispness. Correspondingly, we obtain that whenever $f\colon X\to Y$ is a morphism of schemes such that $f|_U\colon U\to Y$ is crisp for some open subscheme $U\subseteq X$, then $f$ is crisp. We call this fact ``the garbage principle''.
\end{Remark}
 
\begin{Lemma}\label{crisp-product-rings}
    If $\varphi_1\colon A_1\to B_1$ and $\varphi_2\colon A_2\to B_2$ are crisp ring homomorphisms, then so is $\varphi_1\times\varphi_2$.
\end{Lemma}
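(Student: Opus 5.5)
The plan is to show directly, via Definition~\ref{crisp-ring-hom}, that $\varphi\coloneqq\varphi_1\times\varphi_2\colon A_1\times A_2\to B_1\times B_2$ is crisp. We use the idempotent decomposition of modules over a product ring. Set $A\coloneqq A_1\times A_2$ and let $e_1=(1,0)$, $e_2=(0,1)$ be the orthogonal idempotents. Any $A$-module $M$ then splits canonically as $M=e_1M\oplus e_2M$, where $M_k\coloneqq e_kM$ is naturally an $A_k$-module and $A$ acts on $M_k$ through the projection $A\to A_k$. Moreover, $M_k\cong M\otimes_A A_k$.

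The key computation is the identification of $M\otimes_A(B_1\times B_2)$. Since tensor products commute with finite direct sums, we have $M\otimes_A(B_1\times B_2)\cong (M\otimes_A B_1)\oplus(M\otimes_A B_2)$, where $B_k$ is regarded as an $A$-module via $A\to A_k\to B_k$. Because $e_{3-k}$ acts as zero on $B_k$, we also get
\[M\otimes_A B_k\cong (M\otimes_A A_k)\otimes_{A_k}B_k\cong M_k\otimes_{A_k}B_k.\]
We then need to check that, under these identifications, the map $M\to M\otimes_A(B_1\times B_2)$, $m\mapsto m\otimes 1$, becomes the direct sum of the maps $M_k\to M_k\otimes_{A_k}B_k$, $m_k\mapsto m_k\otimes 1$. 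This follows from $1=(1,1)=e_1+e_2$ and $m\otimes e_k=e_km\otimes e_k$.

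With this in place, each summand map is injective by crispness of $\varphi_k$, since $M_k$ is an $A_k$-module. A direct sum of injective maps is injective, so $M\to M\otimes_A(B_1\times B_2)$ is injective for every $A$-module $M$. This is criterion (i) of Theorem~\ref{crisp-equivalences}.

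The only step requiring care is the bookkeeping in the isomorphism $M\otimes_A B_k\cong M_k\otimes_{A_k}B_k$ and its compatibility with the unit maps. Everything else is formal, and the localisation criterion Proposition~\ref{crisp-local-rings} is not needed. A geometric reformulation is also available: $\Spec(A_1\times A_2)=\Spec A_1\sqcup\Spec A_2$, and one could check crispness on each clopen piece. However, the direct module-theoretic argument avoids any results that come later in the paper.
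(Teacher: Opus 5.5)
Your argument is correct, but it takes a different route from the paper. The paper's proof is geometric. It quotes \cite[Theorem 5.12]{Mes04a} to identify crispness of a ring homomorphism with universal schematic dominance (purity) of the associated morphism of affine schemes. It then uses that purity is local on the target, applied to the clopen cover $\Spec(A_1\times A_2)=\Spec A_1\sqcup\Spec A_2$.

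You instead verify the definition directly. You split an $A$-module as $M=e_1M\oplus e_2M$ and identify the unit map $M\to M\otimes_A(B_1\times B_2)$ with $m\mapsto(e_1m\otimes 1,\,e_2m\otimes 1)$. This is the direct sum of the unit maps $M_k\to M_k\otimes_{A_k}B_k$, each injective by crispness of $\varphi_k$. The bookkeeping you flag is right.

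Your version is self-contained: it needs nothing beyond the compatibility of tensor products with finite direct sums. The paper's two-line proof instead leans on a nontrivial external theorem about schematic dominance. Your idempotent computation is essentially the algebraic content of ``locality on the target'' for a decomposition into two clopen pieces, so the two proofs are close at bottom, with the paper's routed through heavier machinery.

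Neither argument extends verbatim to infinite products. Modules over $\prod_i A_i$ need not split into components, and $\Spec\prod_i A_i\neq\bigsqcup_i\Spec A_i$. This does not matter here: the paper only uses finite products, which follow from the two-factor case by induction. If you ever want arbitrary products, apply the equational criterion (xiv) of Theorem~\ref{crisp-equivalences} componentwise; it gives them directly.
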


\begin{proof}
    Geometrically, this is clear: It is shown in \cite[Theorem 5.12]{Mes04a} that a ring homomorphism is crisp in the sense of Definition \ref{crisp-ring-hom} if and only if the associated morphism of affine schemes is universally schematically dominant (i.e. pure), which is a property of scheme morphisms local on the target. Thus, passing to the open covering $(\Spec (A_1),\Spec(A_2))$ of $\Spec A_1\times A_2=\Spec A_1\sqcup \Spec A_2$, we obtain the statement.
\end{proof}

\begin{proof}[Proof of the equivalences in Definition \ref{crisp-morphism}]
    For the equivalences $\textnormal{(i)}\Leftrightarrow\textnormal{(ii)}$ and $\textnormal{(iii)}\Leftrightarrow\textnormal{(iv)}$, we note that it suffices to prove the following statement: Let $f\colon X\to Y$ be a morphism of schemes where $X$ is qcqs and $Y$ is affine. Then, there is a finite open affine covering $(V_i)_{i}$ of $X $ such that the ring homomorphism corresponding to 
        \[\bigsqcup_{i} V_i\to X\] 
    is crisp if and only if this is the case for every finite open affine covering $(U_j)_{j}$ of $X$.\\
    We only need to show one implication. Thus, let $(V_i)_{i}$ be a finite open affine covering of $X$ such that the ring homomorphism corresponding to
        \[\bigsqcup_{i} V_i\to X\]
    is crisp. Choose another finite open affine covering $(U_j)_{j}$ of $X$. We want to show that the ring homomorphism corresponding to 
        \[\bigsqcup_{j} U_j\to X\]
    is crisp. For this, we note the following: Since $X$ is quasi-separated, the intersections $U_j\cap V_i$ are quasi-compact. Hence, we find finitely many affine opens $(W^{(ij)}_k)_{k}$ covering $U_j\cap V_i$. Then, we have the equalities
        \[V_i = \bigcup_{j,k}W^{(ij)}_k\quad \textnormal{and}\quad U_j =  \bigcup_{i,k}W^{(ij)}_k.\] 
    We note that the ring homomorphisms corresponding to the morphisms of affine schemes
        \[\bigsqcup_{j,k} W^{(ij)}_k\to V_i\quad \textnormal{and}\quad \bigsqcup_{i,k} W^{(ij)}_k\to U_j\]
    are crisp since they are faithfully flat (cf. Lemma \ref{ff-crisp-rings}). In the commutative diagram of affine schemes
        \[\begin{tikzcd}\displaystyle
         \bigsqcup_{i,j,k} W^{(ij)}_k\arrow[r,"f_1"]\arrow[d,"f_2"] & \displaystyle\bigsqcup_i V_i \arrow[d,"g_1"]\\
        \displaystyle \bigsqcup_j U_j \arrow[r,"g_2"] & X,
        \end{tikzcd}\]
    the ring homomorphisms corresponding to $f_1$, $f_2$ are crisp by Lemma \ref{crisp-product-rings}, and the ring homomorphism $g_1$ is crisp by construction. Thus with Proposition \ref{permanence-crisp-rings}, the ring homomorphism corresponding to $g_2$ is crisp as wished.\\
    Thus so far, we have shown the equivalences $\textnormal{(i)}\Leftrightarrow\textnormal{(ii)}$ and $\textnormal{(iii)}\Leftrightarrow\textnormal{(iv)}$. It is clear that the former complex of statements implies the latter, so we now show the implication $\textnormal{(iv)}\Rightarrow \textnormal{(i)}$. We take an auxiliary step and show that if (iv) is true, then for every open affine covering $(W_i)_{i\in I}$ of $Y$ and for all $i$, there is a quasi-compact open $A_i\subseteq f^{-1}(W_i)$ with $f(A_i) = W_i$ such that there is a finite open affine covering $A_{i,1},\dots, A_{i,r_i}$ of $A_i$ for which the ring homomorphism corresponding to the morphism of affine schemes
            \[\bigsqcup_{k=1} ^{r_i} A_{i,k}\longrightarrow W_i\]
        is crisp.
    
    Thus, let us assume (iv): Let $(V_j)_{j\in J}$ be an open affine covering of $Y$ such that for each $V_j$, there is some quasi-compact open $U_j$ in $X$ with $U_j\subseteq f^{-1}(V_j)$ and $f(U_j) = V_j$ such that for every finite open affine covering $U_{j,1},\dots, U_{j,r_{j}}$ of $U_j$, the ring homomorphism corresponding to 
        \[\bigsqcup_{k=1} ^{r_j} U_{j,k}\to V_j\]
    is crisp. Now, choose some other open affine covering $(W_i)_{i\in I}$ of $Y$. Fix some $n\in I$. We know that $(V_j\cap W_n)_{j\in J}$ is an open covering of $W_n$. Since $W_n$ is affine and thus quasi-compact, $J$ is finite without loss of generality. We know that for any such $V_j$,  $(V_j\cap W_i)_{i\in I}$ is an open covering, and we can again assume $I$ to be finite, and to include $n$. Since $Y$ is quasi-separated, $V_j\cap W_i$ is quasi-compact and we find some finite open affine covering $(Z_l^{(ij)})_{l}$ of $V_j\cap W_i$. Then, we obtain the finite unions
        \[V_j = \bigcup_{i,l} Z_l^{(ij)}\quad \textnormal{and}\quad W_i= \bigcup_{j,l} Z_l^{(ij)},\]
    where the latter is in particular true for $W_n$. We want to show that for any $Z_l^{(ij)}$, there is some $A_l^{(ij)}\subseteq f^{-1}(Z_l^{(ij)})$ which is quasi-compact and maps surjectively onto $Z_l^{(ij)}$ such that there is a finite open affine covering $(B_m^{(ijl)})_{m}$ of $A_l^{(ij)}$ for which the ring homomorphism associated to 
        \[\bigsqcup_{m} B_m^{(ijl)} \to Z_l^{(ij)}\]
    is crisp. If we can find these sets, then setting 
        $$A_i = \bigcup_{j,l}A_l^{(ij)}$$ 
    yields a quasi-compact open subscheme of $X$ contained in $f^{-1}(W_i)$ mapping surjectively onto $W_i$ for which we also get: As
        \[\bigsqcup_m B_m^{(ijl)} \to Z_l^{(ij)}\]
    corresponds to a crisp ring homomorphism, so does
        \[\bigsqcup_{j,l,m} B_m^{(ijl)}\to \bigsqcup_{j,l} Z_l^{(ij)} \] 
    by Lemma \ref{crisp-product-rings}. Since
        \[\bigsqcup_{j,l} Z_l^{(ij)} \to W_i\]
    corresponds to a crisp morphism of affine schemes by faithful flatness of this map, 
    the ring homomorphism corresponding to 
        \[\bigsqcup_{m,j,l} B_m^{(ijl)}\to W_i\]
    is crisp as well. We are then done, since this guarantees for every $n\in I$ a quasi-compact $A_n$ in $f^{-1}(W_n)$ and $(B_m^{(njl)})_{m,j,l}$ as its finite open affine covering for which the definition of crispiness is satisfied.\\ Now, about the existence of all these sets: Consider for fixed indices the cartesian diagram
        \[\begin{tikzcd}
            U_{j,k}\times_{V_j}Z_l^{(ij)} \arrow[r]\arrow[d] & U_{j,k}\arrow[d]\\
            Z_l^{(ij)} \arrow[r] & V_j,
        \end{tikzcd}\]
    in which the bottom arrow is an open immersion.
    Setting $B^{(ijl)}_m =  U_{j,m}\times_{V_j}Z_l^{(ij)}$ and $A_l^{(ij)} = \bigcup_m B^{(ijl)}_m$, we obtain:
    \begin{itemize}
        \item All the $B^{(ijl)}_m$ are affine opens in $f^{-1}\left(Z_l^{(ij)}\right)$.
        \item They form a a finite open affine covering of $A_l^{(ij)}$, which is then open and quasi-compact as the finite union of open quasi-compacts.
        \item By construction, $A_l^{(ij)}$ is contained in $f^{-1}\left(Z_l^{(ij)}\right)$ and it maps surjectively onto $Z_l^{(ij)}$: We have the cartesian diagram 
            \begin{equation}\label{cart1}
                \begin{tikzcd}
                    \displaystyle \bigsqcup_m B^{(ijl)}_m \arrow[r]\arrow[d] & \displaystyle\bigsqcup_m U_{j,m}\arrow[d]\\
                    Z_l^{(ij)}\arrow[r] & V_j
                \end{tikzcd}
            \end{equation}
        in which the right vertical arrow is surjective, thus the left one is as well. This map factors through
            \[ \bigsqcup_m B^{(ijl)}_m \longrightarrow  \bigcup_m B^{(ijl)}_m = A_l^{(ij)}\longrightarrow Z_l^{(ij)}, \]
        with which the second arrow is surjective.
        \item In the cartesian Diagram \ref{cart1}, the right vertical arrow corresponds to a crisp ring homomorphism, thus the left one does as well by Proposition \ref{permanence-crisp-rings}.
    \end{itemize}
    These are all the requirements we posed such that the proof is concluded.
\end{proof}

\subsection{Permanence properties}

In this section, we will study some permanence properties of crisp morphisms.
\begin{Theorem}\label{permanence-crisp}
    Crisp morphisms of quasi-separated schemes satisfy the following properties:
    \begin{assertionlist}
        \item Being crisp is stable under composition.
        \item Being crisp is local on the target.
        \item Being crisp is stable under base change.
    \end{assertionlist}
\end{Theorem}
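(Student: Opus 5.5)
The plan is to work throughout with the equivalent characterisations of Definition \ref{crisp-morphism}, and to reduce every claim to the permanence properties of crisp ring homomorphisms in Proposition \ref{permanence-crisp-rings} and to Lemma \ref{crisp-product-rings}. We also use that lqcs morphisms are stable under composition and base change and are local on the target. I would prove (2) first, since it is the easiest and is then used in (1) and (3).

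\emph{Locality on the target (2).} Let $(Y_\alpha)$ be an open covering of $Y$ with each $f^{-1}(Y_\alpha)\to Y_\alpha$ crisp. Applying characterisation (iv) to each restriction gives an affine open covering of each $Y_\alpha$ with the required quasi-compact opens and finite affine coverings. Taken together these form an affine open covering of $Y$ of the kind in (iii), so $f$ is crisp. Conversely, suppose $f$ is crisp and $Y'\subseteq Y$ is open. Every affine open $V\subseteq Y'$ is an affine open of $Y$, so (ii) for $f$ gives (ii) for $f^{-1}(Y')\to Y'$ directly. Surjectivity and the lqcs property restrict to open subsets by the Remark following Lemma \ref{loc-qc-surjective}.

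\emph{Base change (3).} Let $g\colon Y'\to Y$ be a morphism with $Y'$ quasi-separated. Then $X'=X\times_YY'$ should be quasi-separated, which holds because $X$ and $Y'$ are. By (2) we may assume $Y=\Spec A$ and $Y'=\Spec A'$ are affine. Take $U\subseteq X$ as in (ii), with affine covering $U_1,\dots,U_r$, so that $A\to\prod_i\O(U_i)$ is crisp. The fibre product $U\times_YY'$ is a quasi-compact open of $X'$, covered by the affines $U_i\times_YY'$ with rings $\O(U_i)\otimes_AA'$. Since the product is finite, $\prod_i\O(U_i)\otimes_AA'=(\prod_i\O(U_i))\otimes_AA'$. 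Hence $A'\to\prod_i\O(U_i)\otimes_AA'$ is a base change of a crisp map, and it is crisp by Proposition \ref{permanence-crisp-rings} (4). Surjectivity of $U\times_YY'\to Y'$ follows from Lemma \ref{crisp-ring-hom-surj}, or simply from stability of surjectivity under base change. This verifies (i) for $f'$.

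\emph{Composition (1).} This is the step I expect to be the main obstacle. Let $f\colon X\to Y$ and $g\colon Y\to Z$ be crisp, and by (2) take $Z=\Spec C$ affine. Choose a quasi-compact $W\subseteq Y$ with $g(W)=Z$ and a finite affine covering $W_1,\dots,W_s$ such that $C\to\prod_j\O(W_j)$ is crisp. For each $W_j$, property (ii) of $f$ gives a quasi-compact $U_j\subseteq f^{-1}(W_j)$ with $f(U_j)=W_j$ and a finite affine covering $U_{j,1},\dots,U_{j,r_j}$ with $\O(W_j)\to\prod_k\O(U_{j,k})$ crisp. Set $U=\bigcup_jU_j$. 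This is quasi-compact, and $g(f(U))=g(W)=Z$. By Lemma \ref{crisp-product-rings}, $\prod_j\O(W_j)\to\prod_{j,k}\O(U_{j,k})$ is crisp. Composing with $C\to\prod_j\O(W_j)$ and applying Proposition \ref{permanence-crisp-rings} (1) gives that $C\to\prod_{j,k}\O(U_{j,k})$ is crisp, which verifies (i) for $g\circ f$. What needs care is to confirm that the $U_{j,k}$, which are defined over different $W_j$, together form an affine covering of $U$ in the sense required by (i). They do: they are affine opens of $X$ whose union is $U$, even though they may overlap, and (i) asks only for a finite affine open covering. Finally, $g\circ f$ is surjective, and it is lqcs by the Remark after Lemma \ref{loc-qc-surjective}.
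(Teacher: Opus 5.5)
Your proposal is correct and follows essentially the paper's own proof. The ingredients are the same: for composition, Lemma~\ref{crisp-product-rings} plus composition of crisp ring maps; for base change, base change of the crisp map $A\to\prod_i\O(U_i)$; for locality, taking the union of the witnessing affine coverings. The only difference is ordering: you prove locality first and use it to reduce to affine targets, whereas the paper works directly with an arbitrary affine open of the target and assembles characterisation (iii).
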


\begin{proof}
    (1)\quad Let $f\colon X\to Y$ and $g\colon Y\to Z$ be crisp morphisms of quasi-separated schemes. Their composition is locally quasi-compact surjective. For its crispness, we choose some open affine $W\subseteq Z$. Since $g$ is crisp, there is some quasi-compact open $V$ contained in $g^{-1}(W)$ mapping surjectively onto $W$ and finitely many affine opens $V_1,\dots, V_r$ covering $V$ such that the ring homomorphism corresponding to the morphism of affine schemes
        \[\bigsqcup_{i=1}^r V_i\to W\]
    is crisp. Since $f$ is crisp as well, we find for any $i$ a quasi-compact open $U_i$ contained in $f^{-1}(V_i)$ mapping surjectively onto $V_i$ and finitely many affine opens $U_{i,1},\dots, U_{i,s_i}$ covering $U_i$ such that the ring homomorphism corresponding to the morphism of affine schemes
        \[\bigsqcup_{j=1}^{s_i} U_{i,j}\to V_i\]
    is crisp for any $i$. With Lemma \ref{crisp-product-rings}, the ring homomorphism corresponding to the morphism of affine schemes
        \[\bigsqcup_{i,j} U_{i,j} \to \bigsqcup_{i} V_i\]
    is crisp as well. Then, the composition 
        \[\bigsqcup_{i.j} U_{i,j} \to W\]
    corresponds to a crisp ring homomorphism too. With $U = \bigcup_{i} U_i$, we have found some quasi-compact open in $(g\circ f)^{-1}(W)$ mapping surjectively onto $W$ which has a finite affine open covering $(U_{i,j})_{i,j}$ satisfying what is needed for the crispness of $g\circ f$. \\
    (2)\quad Let $f\colon X\to Y$ be a morphism of quasi-separated schemes and let $(V_i)_{i\in I}$ be an open covering of $Y$. We first note that being locally quasi-compact surjective is local on the target. Since any affine open subscheme of any of the $V_i$ is an affine open subscheme of $Y$, $f^{-1}(V_i)\to V_i$ is crisp for any $i$ whenever $f$ is. Conversely, if $f^{-1}(V_i)\to V_i$ is crisp for all $i$, we find for any $i$ an open affine covering $(U_j^{(i)})_{j\in J_i}$ of $V_i$ witnessing the crispness of $f^{-1}(V_i)\to V_i$. The union of all of these yields an open affine covering of $Y$ witnessing the crispness of $f$.\\
    (3)\quad Consider a cartesian diagram of quasi-separated schemes
        \[\begin{tikzcd}
            X'\arrow[r, "g'"]\arrow[d,"f'"] & X\arrow[d,"f"]\\
            S'\arrow[r,"g"] & S
        \end{tikzcd}\]
    where $f$ is crisp. We know that being locally quasi-compact surjective is stable under base change. Let $Z$ be an affine open in $S$. Consider its preimage in $S'$, which can be covered by affine opens $(V_i)_{i\in I}$. Since $f$ is crisp, there is some quasi-compact open $W$ contained in $f^{-1}(Z)$ mapping surjectively onto $Z$ such that there is some finite open affine covering $W_1,\dots,W_r$ of $W$ for which the ring homomorphism corresponding to the morphism of affine schemes
        \[\bigsqcup_{k=1}^r W_k\to Z\]
    is crisp. We now note:
    \begin{itemize}
        \item For any fixed $i$, the $W_k\times_Z V_i$ are finitely many open affines lying in $f'^{-1}(V_i)\subseteq X'$.
        \item Since the commutative diagram of affine schemes
        \[\begin{tikzcd}
            \displaystyle\bigsqcup_k W_k\times_Z V_i \arrow[d]\arrow[r] & \displaystyle\bigsqcup_k W_k\arrow[d]\\
            V_i\arrow[r] & Z
        \end{tikzcd}\]
        is cartesian, the ring homomorphism corresponding to the map 
            \[\bigsqcup_k W_k\times_Z V_i\to V_i\]
        is crisp for any $i$.
        \item The union $\bigcup_k W_k\times_Z V_i$ is quasi-compact and open and lies in $f'^{-1}(V_i)$. 
    \end{itemize}
    Repeating this argument for any affine open in $S$ yields an open affine covering of $S'$ witnessing the crispness of $f'$.
\end{proof}

\begin{Remark}
We refer to \cite[Section 11.10]{EGAIV}, \cite{Mes04a}, \cite{Pic13} and \cite{Mes04b} for similar statements about pure (i.e. universally schematically dominant) morphisms of schemes. Important ones, which we will need later on, are as follows:
\begin{itemize}
    \item Being pure is local on the target, stable under base chance and stable under composition.
    \item If a composition $g\circ f$ of morphisms of schemes is pure, then so is $g$.
    \item Any pure morphism of schemes is surjective.
\end{itemize}
\end{Remark}

\subsection{Crisp morphisms in context}
In this section, we will draw connections between crisp morphisms and other important classes of morphisms.

\begin{Proposition}\label{faithfully-flat-crisp}
    Any fpqc morphism of schemes is crisp. 
\end{Proposition}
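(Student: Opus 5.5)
Recall that an fpqc morphism $f\colon X\to Y$ is flat, surjective, and such that every quasi-compact open of $Y$ is the image of a quasi-compact open of $X$ (the Stacks Project definition, Tag 022B). Since crispness is only defined for quasi-separated schemes, I assume throughout that $X$ and $Y$ are quasi-separated. The plan is to verify directly condition (iii) of Definition \ref{crisp-morphism}, after first checking that $f$ is locally quasi-compact surjective.

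First, the lqcs property is immediate. Characterisation (i) of Lemma \ref{loc-qc-surjective} is exactly the image condition in the definition of fpqc, so $f$ is lqcs.

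Next, fix an arbitrary affine open $V=\Spec A\subseteq Y$. By the fpqc condition there is a quasi-compact open $U\subseteq X$ with $f(U)=V$. Then $U\subseteq f^{-1}(V)$ automatically. Choose a finite affine open covering $U_1,\dots,U_r$ of $U$ with $U_i=\Spec B_i$, and let $B=\prod_i B_i$, so that $\bigsqcup_i U_i=\Spec B$. It remains to show that $A\to B$ is crisp.

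For this I verify faithful flatness and then apply Lemma \ref{ff-crisp-rings}. Flatness holds because each $A\to B_i$ is flat: it is the map on sections of the flat morphism $f|_{U_i}\colon U_i\to V$ between affines, and flatness of a morphism of affine schemes is equivalent to flatness of the ring map. A finite product of flat $A$-modules is flat. Faithfulness holds because the morphism $\Spec B\to\Spec A$ is surjective, its image being $f(U)=V$. A flat ring map with surjective map on spectra is faithfully flat (Stacks, Tag 00HQ). Hence $A\to B$ is faithfully flat and therefore crisp by Lemma \ref{ff-crisp-rings}. Since $V$ was arbitrary, this gives condition (ii), and hence all the equivalent conditions, of Definition \ref{crisp-morphism}.

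There is no real obstacle. The only points needing care are that the paper's definition of fpqc matches the quasi-compact image condition, and that quasi-separatedness is part of the standing hypotheses. If one instead uses the weaker convention that every affine open of $Y$ is the image of a quasi-compact open, the argument is unchanged: it only ever uses affine $V$.
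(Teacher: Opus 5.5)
Your proof is correct and takes essentially the same approach as the paper: for an affine open $V\subseteq Y$ you take a quasi-compact open $U$ with $f(U)=V$, cover it by finitely many affines, show that $\bigsqcup_i U_i\to V$ is flat and surjective (hence faithfully flat), and conclude crispness by Lemma~\ref{ff-crisp-rings}. You announce condition (iii) at the start but conclude with condition (ii); this is harmless, since your finite affine cover was arbitrary and the conditions are equivalent.
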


\begin{proof}
    Let $f\colon X\to Y$ be fpqc, i.e. faithfully flat and locally quasi-compact surjective. Choose some open affine subscheme $V\subseteq Y$. Since $f$ is locally quasi-compact surjective, there is some quasi-compact open $U\subseteq f^{-1}(V)$ mapping surjectively onto $V$. We cover it by finitely many affine opens $U_1,\dots, U_r$ and we want to show that the ring homomorphism corresponding to the morphism of affine schemes
        \[\bigsqcup_{i=1}^r U_i\to V\]
    is crisp. If we can show that this morphism of affine schemes is faithfully flat, i.e. flat and surjective, the statement follows immediately. \\
    The surjectivity is immediate, given the fact that $U_1,\dots, U_r$ cover $U$ and that $U$ maps surjectively onto $V$. The morphism is flat since $U_i\to V$ is flat by definition, and flatness is local on the source.
\end{proof}

\begin{Proposition}\label{equivalent-on-affines}
    Let $f\colon \Spec B\to \Spec A$ be a morphism of affine schemes. Then, the following are equivalent:
    \begin{equivlist}
        \item The map $f$ is a crisp morphism of schemes.
        \item The map $f$ is a pure morphism of schemes, i.e. universally schematically dominant.
        \item The ring homomorphism $A\xrightarrow{\varphi_f} B$ is crisp in the sense of Definition \ref{crisp-ring-hom}.
    \end{equivlist}
\end{Proposition}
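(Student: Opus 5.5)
The plan is to prove (ii)$\Leftrightarrow$(iii) by citing the literature and to prove (i)$\Leftrightarrow$(iii) directly from Definition \ref{crisp-morphism}.

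For (ii)$\Leftrightarrow$(iii) I will invoke \cite[Theorem 5.12]{Mes04a}, already used in the proof of Lemma \ref{crisp-product-rings}. It states that $\varphi_f$ is crisp if and only if $\Spec B\to\Spec A$ is universally schematically dominant. So (ii) and (iii) are equivalent, and it remains to compare (i) with (iii).

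For (iii)$\Rightarrow$(i), first note that affine schemes are quasi-separated. By Lemma \ref{crisp-ring-hom-surj}, $f$ is surjective. Since $\Spec B$ is quasi-compact, $f$ is quasi-compact and surjective, hence lqcs by the Remark after Lemma \ref{loc-qc-surjective}. Now I verify criterion (iii) of Definition \ref{crisp-morphism}, using the trivial covering $\{V_1=\Spec A\}$ of $Y$, the quasi-compact open $U=\Spec B$, and its one-element affine covering $U_1=\Spec B$. The associated ring homomorphism is $\varphi_f$ itself, which is crisp by assumption.

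For (i)$\Rightarrow$(iii), the difficulty is that the definition only guarantees \emph{some} quasi-compact open $U\subseteq\Spec B$ with $f(U)=\Spec A$, and $U$ may be strictly smaller than $\Spec B$. Criterion (ii) of Definition \ref{crisp-morphism}, applied to $V=\Spec A$, gives such a $U$ and a finite affine open covering $U_1,\dots,U_r$ of $U$ with
\[A\longrightarrow \prod_{i=1}^r \O(U_i)\]
crisp. I then enlarge this to a covering of all of $\Spec B$. Choose basic opens $D(g_1),\dots,D(g_s)$ with $\Spec B=\bigcup_k D(g_k)$. The ring map $A\to \prod_i\O(U_i)\times\prod_k B_{g_k}$ is crisp by the garbage principle (Remark \ref{garbage}): one checks injectivity of $M\to M\otimes_A(\cdot)$ on the first factor. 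This ring map factors as
\[A\xrightarrow{\varphi_f} B\longrightarrow \prod_i \O(U_i)\times\prod_k B_{g_k},\]
where the second map is induced by the open immersions $U_i\hookrightarrow\Spec B$ and $D(g_k)\hookrightarrow\Spec B$. Since the composite is crisp, Proposition \ref{permanence-crisp-rings}~(2) shows that $\varphi_f$ is crisp.

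The main obstacle is exactly this passage from the particular chosen $U$ to the whole of $\Spec B$. The cleanest resolution is the factorisation through $B$ above combined with the "first map crisp" property of Proposition \ref{permanence-crisp-rings}~(2). With this, the garbage enlargement is not even needed, because $A\to\prod_i\O(U_i)$ already factors through $B$ via restriction of sections. So I would present that shorter version and keep the garbage principle only as a remark.
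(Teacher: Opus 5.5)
Your proposal is correct and follows essentially the same route as the paper. You cite \cite[Theorem 5.12]{Mes04a} for (ii)$\Leftrightarrow$(iii), and you prove (iii)$\Rightarrow$(i) from the definition; there you helpfully spell out quasi-separatedness, surjectivity via Lemma \ref{crisp-ring-hom-surj}, and the lqcs condition, where the paper just says ``by definition''. For (i)$\Rightarrow$(iii) you factor $A\to\prod_i\O(U_i)$ through $B$ and apply Proposition \ref{permanence-crisp-rings}~(2), exactly as the paper does, so the garbage-principle detour is indeed unnecessary.
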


\begin{proof}
    The equivalence $\textnormal{(ii)}\Leftrightarrow \textnormal{(iii)}$ is due to \cite[Theorem 5.12]{Mes04a}.\\
    The implication $\textnormal{(iii)}\Rightarrow \textnormal{(i)}$ is by definition.\\
    For the implication $\textnormal{(i)}\Rightarrow\textnormal{(iii)}$, let $f\colon (\Spec B = X)\to (Y = \Spec A)$ be a crisp morphism of affine schemes. Then, there is some quasi-compact open $U\subset X$ and finitely many open affines $U_1,\dots, U_n$ covering $U$ such that the ring homomorphism corresponding to the morphism of affine schemes
        \[\bigsqcup_{i=1}^n U_i\to Y \]
    is crisp. This map factors through
        \[\bigsqcup_{i=1}^n U_i\to X\to Y,\]
    and by Proposition \ref{permanence-crisp-rings}, we get that the ring homomorphism $A\to B$ is crisp as well.
\end{proof}

\begin{Proposition}\label{pureqcqs-crisp}
    Let $f\colon X\to Y$ be a morphism of quasi-separated schemes. Then, $f$ is pure, i.e. universally schematically dominant, if it is crisp. If moreover, $f$ is quasi-compact, then the converse implication is true.
\end{Proposition}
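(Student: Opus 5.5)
Since being pure is local on the target (see the Remark after Theorem~\ref{permanence-crisp}) and being crisp is local on the target by Theorem~\ref{permanence-crisp} (2), I will reduce both implications to the case $Y = V = \Spec A$ affine. I work with the affine criterion of Proposition~\ref{equivalent-on-affines}: a morphism of affine schemes is pure if and only if the corresponding ring homomorphism is crisp.

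\emph{Crisp implies pure.} Let $f$ be crisp and $V=\Spec A\subseteq Y$ affine open. Definition~\ref{crisp-morphism} (i) gives a quasi-compact open $U\subseteq f^{-1}(V)$ with $f(U)=V$ and a finite affine open covering $U_1,\dots,U_r$ of $U$ such that $g\colon\bigsqcup_i U_i\to V$ corresponds to a crisp ring homomorphism. By Proposition~\ref{equivalent-on-affines}, $g$ is pure. Now $g$ factors as
\[\bigsqcup_i U_i\to U\hookrightarrow f^{-1}(V)\to V,\]
and I use the fact from the Remark that if a composition is pure then so is its last factor. This shows that $f^{-1}(V)\to V$ is pure, hence $f$ is pure by locality on the target. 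One point needs care: purity in the sense of Picavet and Mesablishvili is formulated for quasi-compact morphisms, while $f^{-1}(V)\to V$ need not be quasi-compact. I would therefore use the definition ``$\O_{Y'}\to f'_*\O_{X'}$ injective after every base change'', which makes sense in general, and check that the cancellation ``$g\circ h$ pure $\Rightarrow g$ pure'' holds directly. Indeed, after any base change, $\O\to g'_*\O$ composed with $g'_*\O\to (g'h')_*\O$ is injective, so the first map is injective. This needs no quasi-compactness, and this is the argument I would write out.

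\emph{Pure and quasi-compact implies crisp.} Assume $f$ is quasi-compact and pure. By the Remark, $f$ is surjective, so quasi-compact surjective, hence lqcs. Let $V=\Spec A$ be affine open. Then $U:=f^{-1}(V)$ is quasi-compact and satisfies $f(U)=V$. Choose a finite affine open covering $U_1,\dots,U_r$ of $U$ and set $h\colon\bigsqcup_i U_i\to U$. Since $h$ is an fpqc covering, it is pure: base change of a faithfully flat quasi-compact surjection, with $\O\to h_*\O$ injective by fpqc descent. Purity is stable under composition, so $f|_U\circ h\colon\bigsqcup U_i\to V$ is a pure morphism of affine schemes. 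Proposition~\ref{equivalent-on-affines} then makes its ring homomorphism crisp, which verifies Definition~\ref{crisp-morphism} (i).

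The main obstacle is the first direction: the class of pure morphisms and its permanence properties are stated in the literature for quasi-compact morphisms, while crisp morphisms need not be quasi-compact. So I need to spell out exactly which definition of pure is being used and verify the cancellation property by hand, as sketched above. The reverse direction is then routine bookkeeping with the cited permanence properties.
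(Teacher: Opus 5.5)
Your proposal is correct and follows essentially the paper's proof. For crisp $\Rightarrow$ pure you use the same factorization $\bigsqcup_i U_i\to U\hookrightarrow f^{-1}(V)\to V$ with cancellation of injectivity, and for the converse you use the same finite affine cover of $f^{-1}(V)$, composition with $f|_{f^{-1}(V)}$, and Proposition~\ref{equivalent-on-affines}. The only difference is how universality is obtained. The paper invokes stability of crispness under base change and then checks plain schematic dominance on global sections over affine opens. You instead apply the affine equivalence to $\bigsqcup_i U_i\to V$ and then use a cancellation argument that is compatible with base change, which cleanly avoids the quasi-compactness issue you flagged.
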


\begin{proof} 
    Let $f$ be crisp. Since being crisp is stable under base change, it suffices to show that $f$ is schematically dominant, i.e. that the morphism of sheaves on $Y$
        \[\O_Y\to f_*\O_X\]
    is injective. This is equivalent to showing that for any $V\subseteq Y$ open affine, the ring homomorphism 
        \[\O_V(V)=\O_Y(V)\to f_*\O_X(V) = \O_{f^{-1}(V)}(f^{-1}(V))\]
    is injective. Since $f$ is crisp, there is some quasi-compact open $U$ contained in $f^{-1}(V)$ mapping surjectively onto $V$ and such that there is a finite open affine covering $U_1,\dots, U_n$ of $U$ making the ring homomorphism 
        \[\O_V(V)\to \O_{\bigsqcup_{i=1}^n U_i}\left(\bigsqcup_{i=1}^n U_i\right)\]
    crisp and thus injective.
    After pushing all structure sheaves forward to $V$ and taking global sections, the composition
        \[\bigsqcup_{i=1}^n U_i\to U\hookrightarrow f^{-1}(V)\to V\]
    induces the composition of ring homomorphisms
        \[\O_V(V)\to \O_{f^{-1}(V)}(f^{-1}(V))\to \O_U(U)\to \O_{\bigsqcup_{i=1}^n U_i}\left(\bigsqcup_{i=1}^n U_i\right).\]
    Using the cancellation property of injectivity, it now immediately follows that the first map, i.e.
        \[\O_V(V)\to \O_{f^{-1}(V)}(f^{-1}(V)),\]
    is injective as wished.\\
    Assume that conversely, $f$ is quasi-compact and schematically dominant. Then, $f$ is in particular locally quasi-compact surjective. Choose some open affine $V\subseteq Y$. Its preimage $f^{-1}(V)$ is quasi-compact. Choose any finite open affine covering $V_1,\dots, V_r$ of $f^{-1}(V)$. The morphism $f$ is crisp if the ring homomorphism corresponding to 
        \[\bigsqcup_{i=1}^r V_i\to V\]
    is crisp. According to Proposition \ref{equivalent-on-affines} right above, this is the case if and only if the map $\bigsqcup_{i=1}^r V_i\to V$ is universally schematically dominant. We know that 
    \begin{itemize}
        \item the map 
            \[\bigsqcup_{i=1}^r V_i\to f^{-1}(V)\]
        is faithfully flat and quasi-compact, thus universally schematically dominant (cf. \cite[Remark 3.13]{Mes04a})
        \item and the map $f^{-1}(V)\to V$ is universally schematically dominant since this property is local on the target.
    \end{itemize}
    Since 
        \[\bigsqcup_{i=1}^r V_i\to V\]
    factors through
        \[\bigsqcup_{i=1}^r V_i\to f^{-1}(V)\to V\]
    and being schematically dominant is stable under composition, we are done.
\end{proof}

\begin{Example}\label{sch-examples}
    \begin{enumerate}[(1)]
        \item As alluded to in the introduction, demanding a morphism of affine schemes $f\colon \Spec B\to \Spec A$ to be surjective and crisp on the stalks does not coincide with the ring-theoretic notion of crispness. Indeed, let $k$ be an algebraically closed field and let 
            \[f\colon \Spec k[X,Y]/(XY)\to \Spec k[X]\] 
        be the surjective map which projects the affine coordinate cross onto $\mathbb{A}^1_k$. The corresponding ring homomorphism 
            \[k[X]\to k[X,Y]/(XY)\]
        is crisp since it is split injective. However, the morphism of affine schemes $f$ is not crisp on the stalks: Choose $\mathfrak{q}=(Y-a)\subseteq  k[X,Y]/(XY)$ with $a\neq 0.$ The map $f^\#_\mathfrak{q}$ is given by
            \[k[X]_{(X)}\to \bigl(k[X,Y]/(XY)\bigr)_\mathfrak{q}\]
        and by the universal property of the quotient, it factors through
            \[\begin{tikzcd}[row sep =3em,column sep=-2em]
                k[X]_{(X)}\arrow[rr,"f^\#_\mathfrak{q}"]\arrow[dr,"\pi"]&&\bigl(k[X,Y]/(XY)\bigr)_\mathfrak{q}\\
                &k=k[X]_{(X)}/(X)k[X]_{(X)}\arrow[ur].&
            \end{tikzcd}\]
        Since $\pi$ is not injective, $f^\#_\mathfrak{q}$ cannot be injective either. In particular, it is not a crisp ring homomorphism.
        \item Consider the morphism of schemes 
            \[\bigsqcup_{p \textnormal{ prime}} \Spec \mathbb{F}_p\to \Spec \Z.\]
        Since 
            \[\Ker\left(\Z\to \prod_p\mathbb{F}_p\right)=\bigcap_p p\Z =0,\]
        this map is injective on global sections. It can, however, never be crisp! Any quasi-compact open in $\bigsqcup_p \Spec \mathbb{F}_p$ is of the form 
            \[\bigsqcup_{\textnormal{fin. many }p} \Spec \mathbb{F}_p.\]
        Taking any open affine covering of this yields the map 
            \[\bigsqcup_{\textnormal{fin. many }p} \Spec \mathbb{F}_p\to \Spec \Z,\]
        which on global sections never has trivial kernel.
        \item We can use the garbage principle (cf. Remark \ref{garbage}) to construct a morphism of schemes which is crisp but not fpqc. If $f\colon X\to Y$ is a morphism of schemes such that $f|_U$ is fpqc for some open subscheme $U\subsetneq X$, then $f|_U$ is crisp and thus $f$ is crisp. In general however, $f$ will not be fpqc. 
    \end{enumerate}
\end{Example}

\subsection{Excursion: subtrusive and submersive morphisms}

In this subsection, we will give some information on subtrusive and submersive morphisms of schemes. This will become important in the following chapter, where we will need the fact that any quasi-compact crisp morphism is universally subtrusive for some proofs.\\
Submersiveness and subtrusiveness are topological properties of morphisms of schemes that possess interesting (descent) properties. Firstly, any faithfully flat quasi-compact morphism of schemes is universally submersive, and this is actually what we need for the faithfully flat descent of some of the topological properties of morphisms of schemes. Secondly, universally subtrusive morphisms are effective descent morphisms in the fibered category of étale morphisms. This is just an appetiser, and we recommend \cite{Pic86}, \cite{Rydh10} and \cite{Pic13} for further reading.

We give some topological context for the definition of these types of morphisms:

\begin{Reminder}
    We will concern ourselves with the following topologies on a scheme $X$ (cf. \cite[p. 4f.]{Rydh10}, \cite[Definition 10.39]{GW10}):
    \begin{bulletlist}
        \item Zariski topology
        \item Constructible topology: also known as the patch topology. The closed subsets are the pro-constructible subsets, and the open subsets are the ind-constructible subsets, meaning subsets that are locally an intersection or a union of constructible subsets, respectively. Constructible subsets are given by finite unions of sets $U\cap V^{C}$, where $U$ and $V$ are open and retrocompact subsets of $X$. A subset $W$ of $X$ is called retrocompact if $W\cap Z$ is quasi-compact for every quasi-compact open subset $Z$ of $X$. If $X$ is qcqs, these notions simplify: Then, a subset of $X$ is constructible if it is a finite union of subsets $U\cap (X\setminus V)$ where $U$ and $V$ are open quasi-compact, and the pro-constructible subsets are of the form $f(\Spec A)$, where $f\colon \Spec A\to X$ is a morphism of qcqs schemes.
        \item S-topology: Here, the S stands for specialisation w.r.t. the Zariski topology and the open sets are those which are stable under generisation w.r.t. the Zariski topology.
    \end{bulletlist}
\end{Reminder}

\begin{Definition}\label{submersive}
    A surjective morphism of topological spaces $f\colon X\to Y$ is called \textit{submersive} if the topology on $Y$ is the quotient topology with respect to $f$, i.e. a subset $Z$ of $Y$ is closed if and only if $f^{-1}(Z)$ is closed.\\
    A surjective morphism of schemes $f\colon X\to Y$ is called \textit{submersive} if the underlying morphism of topological spaces is submersive, cf. \cite[p. 5]{Rydh10}.
\end{Definition}

\begin{Definition}\label{subtrusive}
    A morphism of schemes $f\colon X\to Y$ is called \textit{subtrusive} if the following two conditions hold (cf. \cite[Definition 1]{Pic13}):
    \begin{definitionlist}
        \item Every ordered pair $y\prec y'$ of points (w.r.t. the Zariski topology) in $Y$ lifts to an ordered pair of points $x\prec x'$ (w.r.t. the Zariski topology) in X, i.e. for every ordered pair $y\prec y'$ of points in $Y$ there is an ordered pair of points $x\prec x'$ in X such that $f(x) = y$ and $f(x') = y'.$
        \item The morphism $f$ is submersive in the constructible topology.
    \end{definitionlist}
\end{Definition}

\begin{Remark}\label{submersive in different topologies}
    One can show that a subset of a scheme is open (resp. closed) in the Zariski topology if and only if it is open (resp. closed) both in the constructible and in the S-topology (cf. \cite[Corollary 1.5]{Rydh10}). It follows that any morphism is submersive in the Zariski topology if it is submersive in the constructible and in the S-topology.
\end{Remark}

This leads to the following statement:

\begin{Lemma}\label{subtrusive-submersive}
    Any subtrusive morphism of schemes is submersive.
\end{Lemma}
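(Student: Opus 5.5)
By Remark \ref{submersive in different topologies}, it suffices to show that a subtrusive morphism $f\colon X\to Y$ is surjective and submersive both in the constructible topology and in the S-topology. Submersiveness in the constructible topology is condition (b) of Definition \ref{subtrusive}, so the work is the S-topology, together with surjectivity.

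Surjectivity comes from condition (a) applied to the trivial pair $y\prec y$: it gives some $x\prec x'$ with $f(x)=y$. Alternatively, surjectivity is already built into the notion of submersiveness in the constructible topology.

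For the S-topology, I fix a subset $Z\subseteq Y$ such that $f^{-1}(Z)$ is closed in the S-topology, i.e. stable under specialisation. I then show that $Z$ is stable under specialisation. Let $y\in Z$ and let $y'$ be a specialisation of $y$, so $y\prec y'$ in the order used in Definition \ref{subtrusive}. By (a) there are $x\prec x'$ in $X$ with $f(x)=y$ and $f(x')=y'$. Since $x\in f^{-1}(Z)$ and this set is stable under specialisation, $x'\in f^{-1}(Z)$, hence $y'\in Z$. The converse direction, that $f^{-1}(Z)$ is S-closed whenever $Z$ is, holds because $f$ is continuous for the S-topology: scheme morphisms preserve specialisation. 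So $f$ is submersive in the S-topology.

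Finally, if $f^{-1}(Z)$ is Zariski closed, then it is closed in both the constructible topology and the S-topology. By the two submersiveness statements, $Z$ is closed in both topologies, and therefore Zariski closed by \cite[Corollary 1.5]{Rydh10}. The converse is continuity of $f$.

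The only delicate point I expect is the orientation of the relation $\prec$. I need to confirm that the lifting in (a) goes in the specialisation direction used for the S-topology, which requires reading Picavet's convention for $\prec$ correctly. If $\prec$ denotes generisation instead, the same argument works with the S-open sets, which are the sets stable under generisation, in place of the S-closed ones.
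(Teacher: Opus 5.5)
Your proof is correct and follows the paper's route: you reduce via Remark~\ref{submersive in different topologies} to submersiveness in the S-topology. There the paper simply cites \cite[Proposition 2.1]{Rydh10}, while you prove the needed fact directly: lifting of specialisations forces $Z$ to be specialisation-stable whenever $f^{-1}(Z)$ is. Your worry about the orientation of $\prec$ is unfounded, since under either convention condition (a) says exactly that every specialisation $y\rightsquigarrow y'$ in $Y$ lifts to a specialisation $x\rightsquigarrow x'$ in $X$ with $f(x)=y$ and $f(x')=y'$.
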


\begin{proof}
    By Remark \ref{submersive in different topologies}, we only need to check why the morphism is submersive in the S-topology. This is Proposition 2.1 in \cite{Rydh10}.
\end{proof}

\begin{Example}
    Note that, in general, the converse does not hold, not even for qcqs schemes! A counterexample can be found in Section 8 of \cite{Pic86}.
    One can show that the two properties are equivalent over locally Noetherian schemes (cf. \cite[p. 2]{Rydh10}).
\end{Example}

The following proposition explains why we have introduced these notions:

\begin{Proposition}\label{crisp-subtrusive}
    Any quasi-compact crisp morphism of schemes is universally subtrusive.
\end{Proposition}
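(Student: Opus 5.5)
Since crispness is stable under base change (Theorem \ref{permanence-crisp} (3)) and quasi-compactness is as well, it suffices to show that every quasi-compact crisp morphism $f\colon X\to Y$ is subtrusive; universality then follows by applying this to every base change. By Proposition \ref{pureqcqs-crisp}, $f$ is then pure, i.e. universally schematically dominant. The plan is to reduce to the affine case and there invoke Picavet's result that pure ring homomorphisms (equivalently, pure quasi-compact morphisms) are subtrusive, cf. \cite{Pic13}. If one prefers not to quote that result for pure morphisms directly, the argument below only needs it for crisp ring homomorphisms.

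\textbf{Reduction to affines.} Subtrusiveness is local on the target: both the lifting of specialisations $y\prec y'$ and submersiveness in the constructible topology can be tested over an open affine covering of $Y$. For the lifting, any specialisation lies in any open containing $y'$. For submersiveness, one uses that a subset is closed in the constructible topology if and only if its intersection with each member of an open covering is. Hence we may assume $Y=\Spec A$ is affine and $X$ is quasi-compact. Choose a finite open affine covering $U_1,\dots,U_r$ of $X$ and set $X'=\bigsqcup_i U_i=\Spec B$. By Definition \ref{crisp-morphism} (ii), applied with $U=X=f^{-1}(Y)$, the composite $A\to B$ is crisp. If $g\colon X'\to Y$ is subtrusive, then so is $f$: it is a general fact (Picavet, Rydh) that if $f\circ p$ is subtrusive then $f$ is, where here $p\colon X'\to X$. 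Indeed, lifts of specialisations through $X'$ map to lifts in $X$, and a quotient map factoring through $p$ forces $f$ to be a quotient map, since $f^{-1}(Z)$ constructibly closed implies $(f\circ p)^{-1}(Z)$ is closed.

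\textbf{Affine case.} Let $\varphi\colon A\to B$ be crisp. Submersiveness in the constructible topology is automatic: $\Spec B\to\Spec A$ is surjective (Lemma \ref{crisp-ring-hom-surj}) and continuous between compact Hausdorff spaces in the constructible topology, hence closed, hence a quotient map. The real work is lifting a specialisation $\mathfrak p\subseteq\mathfrak p'$ of primes of $A$. First I would base change along $A\to A_{\mathfrak p'}/\mathfrak pA_{\mathfrak p'}$, which preserves crispness (Proposition \ref{permanence-crisp-rings} (4)). This reduces the problem to the case where $A$ is a local domain with generic point $\mathfrak p=0$ and closed point $\mathfrak m=\mathfrak p'$, and we need a prime $\mathfrak q\subseteq\mathfrak q'$ of $B$ with $\mathfrak q\cap A=0$ and $\mathfrak q'\cap A=\mathfrak m$. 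Suppose no such pair exists. Then the closure in $\Spec B$ of the generic fibre $\Spec B\otimes_A K$ misses the closed fibre, where $K=\operatorname{Frac}A$. By the quasi-compactness of the closed fibre and of this closure, we obtain finitely many elements $m\in\mathfrak m$, and a single nonzero $a\in A$, such that $1\in\mathfrak mB+\operatorname{Ann}$-type ideal. Concretely, the image of $B$ in $B_a$ satisfies $\mathfrak m B_a=B_a$, so some relation $a^N=\sum m_ib_i$ holds in $B$ modulo $a$-power torsion.

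\textbf{Main obstacle.} The hard part is turning this into a linear system over $A$ that is solvable in $B$ but not in $A$, contradicting Theorem \ref{crisp-equivalences} (xiv). I would take the equation $a^{N}=\sum_i m_i x_i$ after killing the torsion with one extra factor $a^k$, giving $a^{N+k}=\sum_i a^k m_i x_i$. This system is solvable in $B$. By (xiv) it is then solvable in $A$, which forces $a^{N+k}\in\mathfrak m a^k$. Cancelling $a^k$ in the domain $A$ gives $a^{N}\in\mathfrak m$. That alone is not yet a contradiction, so the choice of $a$ must be arranged carefully, e.g. by replacing $A$ with its localisation or valuation-type modification, or by applying crisp descent to the module $A/aA$ via Theorem \ref{crisp-equivalences} (i). If this bookkeeping becomes unwieldy, I would fall back on citing \cite[Theorem 5.12]{Mes04a} together with Picavet's theorem \cite{Pic13} that universally schematically dominant quasi-compact morphisms are universally subtrusive.
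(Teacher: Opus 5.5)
Your fallback is the paper's proof. The paper notes that each base change $X\times_Y\Spec V\to\Spec V$ to a valuation ring is again crisp and quasi-compact, hence pure by Proposition~\ref{pureqcqs-crisp}, and concludes with Picavet's valuative criterion (Lemma~\ref{val-lemma}). Your self-contained route, however, has a genuine gap at its only non-formal step, the lifting of a specialisation. You spotted the symptom yourself: the relation $a^{N+k}=\sum_i a^k m_i x_i$ only yields $a^N\in\mathfrak m$, which contradicts nothing. The cause is the passage from $1\in\mathfrak mB+\Ker(B\to B_a)$ to $\mathfrak mB_a=B_a$. The latter holds trivially whenever $a\in\mathfrak m$, which is the only case of interest, since a unit of the local ring $A$ kills no nonzero torsion. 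So it carries no information, and clearing denominators in it is exactly what introduces the spurious factor $a^N$. None of the remedies you list (choosing $a$ more carefully, valuation-type modifications, descent for $A/aA$) addresses this.

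The repair is to keep the relation in $B$ itself. Let $T=\Ker(B\to B\otimes_A K)$ be the $A$-torsion of $B$. The closure of the generic fibre is $V(T)$, and every point of it is a specialisation of a point of the generic fibre. This holds because the generic fibre is pro-constructible; alternatively, minimal primes of $B/T$ avoid the image of $A\setminus\{0\}$, which acts on $B/T$ by nonzerodivisors. So if no lift exists, then $V(T)\cap V(\mathfrak mB)=\emptyset$, i.e.\ $1=t+\sum_i m_ib_i$ with $t\in T$, $m_i\in\mathfrak m$, $b_i\in B$. This is your relation with $N=0$. Choose $0\neq s\in A$ with $st=0$; then $s=\sum_i (sm_i)b_i$ in $B$. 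By Theorem~\ref{crisp-equivalences}~(xiv), or simply crispness applied to the module $A/s\mathfrak m$, there are $a_i\in A$ with $s(1-\sum_i m_ia_i)=0$. Cancelling $s$ in the domain $A$ gives $1\in\mathfrak m$, a contradiction.

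With this fix your argument is a genuinely different and more elementary proof. It needs only Theorem~\ref{crisp-equivalences}, compactness of the constructible topology, and the formal locality and cancellation properties of subtrusiveness. It bypasses purity and \cite[Theorem 50]{Pic13} entirely, whereas the paper's proof is a two-line reduction to those results. A minor point: Definition~\ref{crisp-morphism}~(ii) only provides some quasi-compact $U\subseteq X$, not $U=X$. This is harmless, since your cancellation argument never uses surjectivity of $p\colon X'\to X$.
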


\begin{Lemma}\label{val-lemma}
    A quasi-compact morphism of schemes $f\colon X\to Y$ is universally subtrusive if and only if for every morphism of schemes $\Spec V\to Y$, where $V$ is a valuation domain, the induced morphism of schemes
    \[X\times_Y \Spec V \to \Spec  V \]
    is pure.
\end{Lemma}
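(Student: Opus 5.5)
The plan is to reduce both directions to the valuative criterion for universal subtrusiveness from \cite{Rydh10}. For a quasi-compact $f\colon X\to Y$, that criterion says $f$ is universally subtrusive if and only if the following holds. For every valuation ring $V$ and every morphism $\Spec V\to Y$, there are an extension of valuation rings $V\to W$ (injective and local) and a morphism $\Spec W\to X$ making the evident square commute. I take this criterion as a black box. Since $f$ is quasi-compact and both purity and subtrusiveness are stable under base change, I may base change along $\Spec V\to Y$ and work with $f_V\colon X_V\coloneqq X\times_Y\Spec V\to\Spec V$. This is a quasi-compact morphism to the spectrum of a valuation domain with generic point $\eta$ and closed point $s$.

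\emph{($\Rightarrow$)} The morphism $f_V$ is subtrusive, so the specialisation $\eta\prec s$ lifts to a specialisation $x\prec x'$ in $X_V$. By the standard lemma on specialisations in schemes, there are a valuation ring $W$ and a morphism $\Spec W\to X_V$ sending the generic point to $x$ and the closed point to $x'$. The composite $V\to W$ is then injective, since the generic point maps to $\eta$, and local, since the closed point maps to $s$. Because $W$ is torsion-free over the valuation domain $V$, it is flat over $V$, and locality makes it faithfully flat. Hence $\Spec W\to\Spec V$ is pure, by Lemma~\ref{ff-crisp-rings} together with Proposition~\ref{equivalent-on-affines}. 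This morphism factors as $\Spec W\to X_V\to\Spec V$. The cancellation property recalled in the remark after Theorem~\ref{permanence-crisp} (if $g\circ h$ is pure then $g$ is pure) therefore shows that $f_V$ is pure.

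\emph{($\Leftarrow$)} Here I verify Rydh's criterion. Given $\Spec V\to Y$, the morphism $f_V$ is quasi-compact and pure. Choose a finite affine open cover of $X_V$ and put $B\coloneqq\prod_i\O(U_i)$. By the argument in Proposition~\ref{pureqcqs-crisp}, the map $V\to B$ is crisp. It suffices to find a prime $\mathfrak q\subset B$ lying over $\mathfrak m_V$ that generises to a prime over $(0)$. Granting such a chain, the standard lemma again produces a valuation ring $W$ dominating $V$ with $\Spec W\to\Spec B\to X_V$, which is exactly the lifting Rydh's criterion requires.

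To find the chain, I pass to $B'\coloneqq B/B_{\mathrm{tors}}$, where $B_{\mathrm{tors}}$ is the $V$-torsion ideal. Then $B'$ is torsion-free, hence flat over $V$, so going down holds for $V\to B'$. It therefore remains to show $\mathfrak m_VB'\neq B'$: any prime of $B'$ over $\mathfrak m_V$ then generises to one over $(0)$, and pulling back to $B$ gives the desired chain.

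This is the step I expect to be the main obstacle, because it is the only place where crispness is used in an essential way. I would argue by contradiction. If $\mathfrak m_VB'=B'$, then, since ideals of $V$ are totally ordered, $1=cb$ in $B'$ for some $c\in\mathfrak m_V$. I would then test crispness of $V\to B$ against a suitable torsion module such as $V/c^2V$ or $K/V$. Using (ix) or (xiv) of Theorem~\ref{crisp-equivalences}, I would aim to turn the relation $1-cb\in B_{\mathrm{tors}}$ into a solution in $B$ of the linear equation $c\,x=1$ modulo $c^2$. By purity, that equation then has a solution in $V$, which is impossible since $c\in\mathfrak m_V$.
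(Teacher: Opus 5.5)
Your route differs from the paper's, which gives no argument and simply cites \cite[Theorem 50]{Pic13}. You instead reduce to Rydh's valuative criterion for universal subtrusiveness \cite{Rydh10}, plus an elementary statement about crisp algebras over a valuation ring. The direction ($\Rightarrow$) is correct as written. So is the reduction in ($\Leftarrow$) to proving $\mathfrak m_VB'\neq B'$ for $B'=B/B_{\mathrm{tors}}$.

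One small repair in that reduction: $X_V$ need not be quasi-separated, so Proposition~\ref{pureqcqs-crisp} does not literally apply. Argue directly instead. For every $V$-algebra $R$, the composite $R\to\O(X_V\times_V\Spec R)\to\prod_i\O(U_i)\otimes_VR=B\otimes_VR$ is injective by purity and the sheaf axiom. Hence $V\to B$ is crisp by (ii) of Theorem~\ref{crisp-equivalences}.

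The genuine gap is the last step. You flag it as the crux but only sketch it, and the sketch aims at the wrong target. From $\mathfrak m_VB'=B'$ you only learn that $1-cb$ is $V$-torsion in $B$. Solvability of $cx\equiv1\pmod{c^2}$ in $B$ means $c(x+cy)=1$, i.e.\ that $c$ is a unit in $B$, and the torsion relation alone does not give this. For example, take $K$ the fraction field of $V$, $c,d\in\mathfrak m_V\setminus\{0\}$, $B=V/dV\times K$ and $b=(0,1/c)$. Then $d(1-cb)=0$, yet $c$ is not a unit in $B$. Reducing modulo $c^2$ also loses the relation entirely when its annihilator lies in $c^2V$.

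The missing idea is to keep the annihilator inside the equation. Choose $0\neq d\in V$ with $d(1-cb)=0$ in $B$. Then $x=b$ solves the single $V$-linear equation $(dc)\,x=d$ in $B$. By (xiv) of Theorem~\ref{crisp-equivalences} there is $v\in V$ with $dcv=d$. Since $V$ is a domain, $cv=1$, contradicting $c\in\mathfrak m_V$.

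Equivalently, you can test against a torsion module:
\begin{itemize}
\item the class of $d$ in $V/dcV$ is nonzero, but it maps to the class of $dc\,b$, i.e.\ to $0$, in $B/dcB$;
\item or, using $K/V$, the nonzero class of $1/c$ equals the image of $b$ in $K\otimes_VB$, and therefore dies in $(K/V)\otimes_VB$.
\end{itemize}

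With this line added, your argument is a complete proof modulo Rydh's criterion. Compared with the paper's bare citation, it isolates exactly where purity enters. It still rests on Rydh's theorem, which is itself built on Picavet's results.
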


\begin{proof}
    Cf. \cite[Theorem 50]{Pic13}.
\end{proof}

\begin{proof}[Proof of Proposition~\ref{crisp-subtrusive}]
    If $f$ is quasi-compact and crisp, then so is the morphism $X\times_Y \Spec V \to \Spec  V$ for any valuation domain $V$ since being crisp and quasi-compact is stable under base change. Then, $X\times_Y \Spec V \to \Spec  V$ is pure by Proposition \ref{pureqcqs-crisp} and thus universally subtrusive by Lemma \ref{val-lemma}. 
\end{proof}
    \section{The crisp topology}\label{chap::four}

Let $S$ be a scheme and consider the slice category (Sch/$S$) of schemes over $S$. In the following, we will use the notion of a \textit{Grothendieck topology} as defined in \cite{FA05}, meaning a \textit{pretopology} in the sense of FGA. In the subsections below, we will introduce a new Grothendieck topology on (Sch/$S$), the \textit{crisp} topology, and also study some of its properties, most notably properties of schemes which are local on the target w.r.t. our topology. Furthermore, we will show that it is subcanonical.

We will construct the crisp Grothendieck topology adhering to the structure of \cite{FA05}, where, in Chapter 2.3, the fpqc topology is introduced. 
 
\begin{Reminder}
    The \textit{fpqc topology} on (Sch/$S$) is the topology where the coverings $\{U_i\to U\}_{i\in I}$ are collections of morphisms such that the induced morphism
        \[\bigsqcup_i U_i\to U\]
    is faithfully flat and locally quasi-compact surjective.
\end{Reminder}

The fpqc topology induces a well-behaved site in the sense that it is both finer than the Zariski topology and subcanonical. Drawing back to the ring- and scheme-theoretic relations between faithfully flat and crisp morphisms, the following definition does not come as a surprise:

\begin{Definition}\label{crisp-top}
    The \textit{crisp topology} on (Sch/$S$) is the topology in which the coverings $\{U_i\to U\}_{i\in I}$ are collections of morphisms such that the induced morphism
        \[\bigsqcup_i U_i\to U\]
    is crisp.
\end{Definition}

\begin{proof}[Proof that Definition \ref{crisp-top} indeed gives a Grothendieck topology]
    Firstly, any isomorphism of schemes is crisp. Secondly, being crisp is stable under composition and base change and can moreover be checked locally on the target.
\end{proof}

\begin{Remark}
    Since the fpqc topology is finer than the Zariski topology and every faithfully flat morphism of schemes is crisp, the crisp topology is finer than the fpqc and the Zariski topology. Below we will see that the crisp topology is subcanonical.
\end{Remark}

\subsection{Crisp-local properties}

In this subsection, we want to determine which properties of schemes are crisp-local on the target: Let $f\colon X\to Y$ be a morphism of $S$-schemes and $\{Y_i\to Y\}_{i\in I}$ a crisp covering. Let \textbf{P} be a property of morphisms of schemes and assume that for all $i$, the projection $Y_i\times_Y X\to Y_i$ has property \textbf{P}. We are interested in whether the morphism $f\colon X\to Y$ also has property \textbf{P}. We note that this is true if
\begin{definitionlist}
    \item \textbf{P} is local on the target in the Zariski topology and
    \item \textbf{P} descends along morphisms of schemes which are crisp and quasi-compact.
\end{definitionlist}
This can be seen as follows: As \textbf{P} is local on the target in the Zariski topology, the morphism
    \[\bigsqcup_i Y_i\times_Y X\to \bigsqcup_i Y_i\]
also has property \textbf{P}. Let us now consider the cartesian diagram of $S$-schemes
    \[\begin{tikzcd}
        \displaystyle\bigsqcup_i Y_i\times_Y X \arrow[d]\arrow[r] & X\arrow[d, "f"]\\
        \displaystyle\bigsqcup_i Y_i\arrow[r, "\pi"] & Y,
    \end{tikzcd}\]
where the map $\pi$ is crisp and locally quasi-compact surjective. By property (iii) from the definition of crispness (Definition \ref{crisp-equivalences}), there is an open affine covering $(V_j)_{j\in I}$ of $Y$ such that each $V_j$ is the image of some quasi-compact open subset $W_j$ of $\pi^{-1}(V_j)$ and such that $W_j$ witnesses the crispness of $\pi^{-1}(V_j)\to V_j$. Then, $\pi_{|W_j}$ is crisp and quasi-compact. For every $j$, we have an induced cartesian diagram of $S$-schemes
    \[\begin{tikzcd}
        \displaystyle \bigsqcup_i f^{-1}(V_j)\times_{V_j} W_j \arrow[d]\arrow[r] & f^{-1}(V_j)\arrow[d, "f\vert_{f^{-1}(V_j)}"]\\
        W_j\arrow[r, "\pi\vert_{W_j}"] & V_j.
    \end{tikzcd}\]
Since \textbf{P} is local on the target in the Zariski topology, we may deduce two things: Firstly, the left vertical arrow of the diagram has property \textbf{P}. Secondly, showing that $f\vert_{f^{-1}(V_j)}$ has \textbf{P} for any $j$ suffices for $f$ to have \textbf{P}. As $\pi\vert_{W_j}$ is crisp and quasi-compact, we now know that $f$ has \textbf{P} whenever it descends along crisp and quasi-compact maps.

\begin{Theorem}\label{crispt-descent}
    Let $f\colon X\to Y$ be a morphism of schemes and $g\colon Y'\to Y$ a crisp and quasi-compact morphism of schemes. If $f'\colon (X'\coloneqq X\times_Y Y')\to Y'$ has \textbf{P}, then $f$ has \textbf{P}, where \textbf{P} is one of the following properties:
    \begin{assertionlist}
        \item surjective
        \item injective
        \item bijective
        \item a morphism with set-theoretically finite fibres
        \item open
        \item closed
        \item a homeomorphism
        \item quasi-compact
        \item quasi-separated
        \item separated
        \item (locally) of finite type
        \item (locally) of finite presentation
        \item smooth
        \item unramified
        \item étale
        \item an isomorphism
        \item a monomorphism
        \item an open immersion
        \item proper
        \item quasi-finite
    \end{assertionlist}
\end{Theorem}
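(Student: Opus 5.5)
The plan is to reduce the statement to two known descent results: fpqc descent as organised in \cite[Tag 02KO ff.]{stacks-project}, and descent along universally submersive or universally subtrusive morphisms as in \cite{Rydh10}. The reduction rests on what we already know about $g$. Since $g$ is crisp and quasi-compact, Proposition \ref{pureqcqs-crisp} shows that $g$ is pure, i.e. universally schematically dominant. Proposition \ref{crisp-subtrusive} shows that $g$ is universally subtrusive, hence universally submersive by Lemma \ref{subtrusive-submersive}. It is also surjective, and universally so, since crispness is stable under base change by Theorem \ref{permanence-crisp}. All of these properties are local on $Y$, so I may assume $Y$ affine. Then $Y'$ is quasi-compact, and I may further replace $Y'$ by a finite disjoint union of affines $Y''=\bigsqcup U_i$ as in Definition \ref{crisp-morphism}. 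By the proof of Proposition \ref{equivalent-on-affines}, $Y''\to Y$ then corresponds to a crisp ring homomorphism $A\to B$.

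The properties are handled in four groups.

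\textbf{Topological properties.} For (1)--(4), surjectivity of $g$ suffices. Each fibre $f^{-1}(y)$ receives a surjection from $f'^{-1}(y')\to f^{-1}(y)\times_{\kappa(y)}\kappa(y')$, which is itself surjective. This gives surjectivity and finiteness of fibres. Injectivity is universal injectivity on fibres, and geometric fibres can be compared over a common field extension.

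For (5)--(7), I will use universal submersiveness of $g$. If $W\subseteq X$ is open, then $g'^{-1}(f(W))=f'(g'^{-1}(W))$ is open, so $f(W)$ is open because $g$ is submersive. Closedness is argued identically, and a homeomorphism is a bijective open map.

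\textbf{Quasi-compactness and separation properties.} For (8), I will use that $g'$ is surjective and that $Y'$ is quasi-compact over affine $Y$. For (9) and (10), I apply (8) and (6) respectively to the diagonal, using the base change $X'\times_{Y'}X'=(X\times_YX)\times_YY'$.

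\textbf{Algebraic properties.} For (11)--(15), on affines this is exactly Propositions \ref{descent-algebras} and \ref{descent-etale-etc}. The non-affine case needs care: after covering $X$ by affines $\Spec R$ over $Y=\Spec A$, I base change along $A\to B$, and local finite type descends affine-locally. Finite type is then locally of finite type together with (8), and finite presentation is locally of finite presentation together with (8) and (9).

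\textbf{Remaining properties.} For (17), a monomorphism means the diagonal is an isomorphism, so it reduces to (16). For (16) itself, I take $f$ to be a homeomorphism by (7) and reduce to showing that $\O_Y\to f_*\O_X$ is an isomorphism on affines. This uses Lemma \ref{inj-surj-iso-crisp} together with flat base change for the quasi-compact, quasi-separated $f$. That base change is only needed along $A\to B$, which is not flat; however, $f$ is affine once it is a homeomorphism with affine-local descent, so I will first prove that $f$ is affine by descent of affineness along the effective descent morphism $g$, using \cite{Mes04a}. For (18), an open immersion is étale together with a monomorphism. For (19), proper is separated, finite type and universally closed, via (6) applied after arbitrary base change, since crisp quasi-compact morphisms are stable under base change. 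For (20), quasi-finite is locally of finite type, quasi-compact and with finite fibres.

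The main obstacle I expect is (16), and the descent of affineness that it requires, since crisp morphisms are not flat and cohomology does not commute with base change. If that fails, I would instead use effective descent for quasi-coherent algebras along pure morphisms, \cite{Mes04b}, to realise $X$ as $\mathrm{Spec}$ of a descended algebra.
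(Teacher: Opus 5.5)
Your treatment of (1)--(15), (19) and (20) follows the paper's proof essentially step by step. You use surjectivity and universal submersiveness of $g$ for the topological and diagonal properties, and for (11)--(15) you reduce to a crisp ring map $A\to B$ and apply the ring-theoretic descent results. Your routes to (17) via the diagonal and to (18) via ``\'etale monomorphism $=$ open immersion'' differ from the paper's appeal to G\"ortz--Wedhorn and are fine in themselves. However, both pass through (16), and that is where there is a genuine gap.

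For (16) you correctly see that you need $f$ to be affine, but you do not say how affineness descends along $g$, and the paper proves no such result. Appealing to $g$ being an effective descent morphism for quasi-coherent modules or algebras does not supply it, and your fallback is circular. Effective descent of quasi-coherent algebras produces \emph{some} affine $Y$-scheme $X_0$ with $X_0\times_Y Y'\cong X'$ compatibly with descent data. Identifying $X_0$ with $X$, however, requires knowing that a possibly non-affine $Y$-scheme is determined by its pullback to $Y'$ together with its descent datum, which is exactly what is in question. Since you route (17) and (18) through (16), they are left open as well.

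The missing observation is that $f$ is a \emph{universal} homeomorphism. For any (say affine) $T\to Y$, the morphism $Y'\times_Y T\to T$ is again crisp and quasi-compact by Theorem~\ref{permanence-crisp}~(3). The base change of $X\times_Y T\to T$ along it is a base change of the isomorphism $f'$, so (7) applies to every base change of $f$. Universal homeomorphisms are integral (Stacks Project, Tag 04DF), in particular affine. So for $Y=\Spec A$ you get $X=\Spec C$ and $X'=\Spec(C\otimes_A B)$. The isomorphism $f'$ says that $(A\to C)\otimes_A B$ is an isomorphism, and Lemma~\ref{inj-surj-iso-crisp}~(3) finishes; no flat base change or cohomology is needed. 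This is how the paper argues (its ``(universal) homeomorphism'').

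Alternatively, you can avoid affineness altogether by proving (17) directly:
\begin{itemize}
\item $\Delta_f$ is an immersion and is surjective by (1), hence a closed immersion cut out by a quasi-coherent ideal $\mathcal{I}$.
\item Its base change along the crisp quasi-compact map $(X\times_Y X)\times_Y Y'\to X\times_Y X$ is the isomorphism $\Delta_{f'}$, so locally $IS=0$ with $R\to S$ crisp, hence injective, and therefore $I=0$.
\end{itemize}
Then (16) follows from (1), (15) and (17), since a surjective \'etale monomorphism is an isomorphism.
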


\begin{proof}
    As $f$ is a crisp morphism of schemes and thus surjective, the statements regarding Properties (1) through (4) follow immediately by \cite[Proposition 14.50]{GW10}. 

    Properties (5) through (10) descend along morphisms which are both quasi-compact and universally submersive (cf. \cite[Proposition 14.51 and Remark 14.52 (1)]{GW10}). Thus, they descend along crisp and quasi-compact morphisms (see Lemma \ref{subtrusive-submersive} and Proposition \ref{crisp-subtrusive}).

    Properties (11) through (15) can be reduced to the corresponding statements for rings (cf. Propositions \ref{descent-algebras} and \ref{descent-etale-etc}): Let \textbf{P} be one of the following: locally of finite type, locally of finite presentation, smooth, unramified or étale. Then, $f$ has \textbf{P} if and only if for every open affine $U = \Spec B$ in $X$ and every open affine $V = \Spec A$ in $Y$ with $f(U)\subseteq V$, the ring homomorphism $A\to B$ has \textbf{P}. Since $g$ is crisp, we now obtain a finite open affine covering $W_1,\dots,W_n$ of a quasi-compact open $W\subseteq g^{-1}(V)$ with $g(W) = V$ and a cartesian diagram
        \[\begin{tikzcd}
            \displaystyle\bigsqcup_{i=1}^n W_i\times_{V} U\arrow[rr]\arrow[d] && U\arrow[d]\\
        \displaystyle\bigsqcup_{i=1}^n W_i\arrow[r] & W\arrow[r] & V
        \end{tikzcd}\]
    with affine corners and a crisp morphism at the bottom.
    As \textbf{P} is local on the target and stable under base change, the morphism
        \[\bigsqcup_{i=1}^n W_i\times_{V} U\to \bigsqcup_{i=1}^n W_i \]
    has \textbf{P}. Thus, $U\to V$ has \textbf{P} as wished, using the corresponding descent statement for rings. The properties being of finite type and of finite presentation also descend because of the descent of Properties (8) and (9).

    For the descent of isomorphisms (Property (16)), we note that by descent of the Properties (7) through (9), $f$ is a (universal) homeomorphism and qcqs whenever $f'$ is an isomorphism. It thus only remains to show that the sheaf morphism 
        \[\O_Y\to f_*\O_X\]
    is an isomorphism. We now use a similar strategy as for Properties (11) through (15): Being an isomorphism can be checked locally on affines in $Y$ for $\O_Y\to f_*\O_X$, so we may assume that $Y$ is affine. Then, there is a finite open affine covering $W_1,\dots,W_n$ of a quasi-compact open $W\subseteq Y'$ with $g(W) = Y$ such that in the cartesian rectangle
        \[\begin{tikzcd}
            \displaystyle\bigsqcup_{i=1}^n W_i\times_Y X\arrow[d,"f'''"]\arrow[r] &f'^{-1}(W)\arrow[r,hook]\arrow[d,"f''"]&X'\arrow[r]\arrow[d, "f'"]  & X\arrow[d, "f"]\\
            \displaystyle\bigsqcup_{i=1}^n W_i\arrow[r] & W\arrow[r, hook]& Y'\arrow[r, "g"] & Y,
        \end{tikzcd}\]
   the composition at the bottom is crisp. As $f'$ is an isomorphism, $f'''$ is as well. If the outer rectangle is a base change diagram of affine schemes, we can deduce the wished statement. We may indeed assume this: Since $f$ is a homeomorphism, it is affine, and because $Y$ is affine, $X$ is now as well.
   
    The argumentation for the descent of monomorphisms and open immersions (Properties (17) and (18)) can be taken ad verbatim from \cite[Proposition 14.53 (3)]{GW10}, which may be applied because any crisp map is submersive.

    The descent of proper morphisms (property (19)) is due to the fact that being proper just means being separated, universally closed and of finite type.

    The descent of quasi-finite morphisms (property (20)) is analogously due to the fact that being quasi-finite just means being of finite type with set-theoretically finite fibres.
\end{proof}

\begin{Corollary}\label{crisp-loc-props}
    Let $f\colon X\to Y$ be a morphism of $S$-schemes and $\{Y_i\to Y\}_{i\in I}$ a crisp covering. Let \textbf{P} be a property of morphisms of schemes as above and assume that for all $i$, the projection $Y_i\times_Y X\to Y_i$ has property \textbf{P}. Then, $f$ has property \textbf{P} as well.
\end{Corollary}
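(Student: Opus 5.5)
The plan is to reduce the Corollary to Theorem \ref{crispt-descent}, using the argument given just before that theorem. Two facts must be checked for every property \textbf{P} in the list: \textbf{P} is local on the target in the Zariski topology, and \textbf{P} descends along crisp quasi-compact morphisms.

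First step: set $Y' \coloneqq \bigsqcup_i Y_i$. By definition of a crisp covering, the morphism $\pi\colon Y'\to Y$ is crisp, hence locally quasi-compact surjective. Each property in the list is Zariski-local on the target, which is standard for all twenty. In particular, a morphism to a disjoint union has \textbf{P} if and only if it has \textbf{P} over each summand. Since $Y'\times_Y X = \bigsqcup_i Y_i\times_Y X$, the hypothesis therefore gives that $f'\colon Y'\times_Y X\to Y'$ has \textbf{P}.

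Second step: use characterisation (iii)/(iv) of Definition \ref{crisp-morphism}. This gives an open affine covering $(V_j)_j$ of $Y$ and quasi-compact opens $W_j\subseteq\pi^{-1}(V_j)$ with $\pi(W_j)=V_j$, each admitting a finite affine open covering $W_{j,1},\dots,W_{j,r_j}$ such that $\bigsqcup_k W_{j,k}\to V_j$ corresponds to a crisp ring homomorphism. The key point is that $\pi|_{W_j}\colon W_j\to V_j$ is then crisp and quasi-compact.
\begin{itemize}
\item It is quasi-compact because $W_j$ is quasi-compact and $V_j$ is affine, hence quasi-separated.
\item It is crisp by Proposition \ref{equivalent-on-affines} and the local-on-the-target property. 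Over any affine open $V\subseteq V_j$, the intersections $W_{j,k}\cap\pi^{-1}(V)$ give a quasi-compact witness after a base change, as in the proof of Theorem \ref{permanence-crisp} (3). Alternatively, one takes $V_j$ itself with $U=W_j$ in (iii) and applies (iii)$\Rightarrow$(ii).
\end{itemize}

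Third step: consider the base change of $f|_{f^{-1}(V_j)}$ along $\pi|_{W_j}$. It is an open subscheme restriction of $f'$, namely the preimage of the open $W_j\subseteq Y'$, so it has \textbf{P} by Zariski-locality. Theorem \ref{crispt-descent} then shows that $f|_{f^{-1}(V_j)}\colon f^{-1}(V_j)\to V_j$ has \textbf{P}. Since the $V_j$ cover $Y$, Zariski-locality once more gives that $f$ has \textbf{P}.

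The main obstacle is the second step: checking honestly that $\pi|_{W_j}$ satisfies the definition of a crisp morphism, not merely that some affine cover of it is crisp over $V_j$. The definition requires a witness over every affine open of $V_j$ (or an affine covering), and one must make sure the witness can be taken as $W_j$ itself with its given cover. I would handle this by applying (iii) with the one-element covering $\{V_j\}$, the chosen $U=W_j$, and the given finite affine cover of $W_j$. Here $W_j$ is quasi-separated as an open subscheme of the quasi-separated scheme $Y'$.
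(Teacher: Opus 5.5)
Your proposal is correct and is essentially the paper's own argument. The paper proves the corollary by the reduction sketched just before Theorem~\ref{crispt-descent}: pass to $\pi\colon\bigsqcup_i Y_i\to Y$, use characterisation (iii) of Definition~\ref{crisp-morphism} to get affine opens $V_j$ with quasi-compact witnesses $W_j$, descend \textbf{P} along $\pi|_{W_j}$, and glue by Zariski-locality. Your explicit check that $\pi|_{W_j}\colon W_j\to V_j$ is itself crisp and quasi-compact (via (iii) with the one-element cover $\{V_j\}$, using that $W_j$ is quasi-separated and $V_j$ is affine) fills in a step the paper only asserts.
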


\begin{proof}
    This follows immediately from Theorem \ref{crispt-descent}, using that any of these properties is Zariski-local on the target.
\end{proof}

\subsection{The crisp topology is subcanonical}

\begin{Theorem}\label{subcanonical}
    The crisp topology on (Sch/$S$) is subcanonical, i.e. every representable functor on (Sch/$S$) is a sheaf with respect to the crisp topology.
\end{Theorem}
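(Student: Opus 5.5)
We need to show: for any $S$-scheme $T$ and any crisp covering $\{U_i\to U\}_{i\in I}$, the sequence
\[\Hom_S(U,T)\longrightarrow \prod_i \Hom_S(U_i,T)\rightrightarrows \prod_{i,j}\Hom_S(U_i\times_U U_j,T)\]
is an equalizer.

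The first reduction replaces the family by the single morphism $\pi\colon U'\coloneqq\bigsqcup_i U_i\to U$, which is crisp. Representable functors turn disjoint unions into products, and $U'\times_U U'=\bigsqcup_{i,j}U_i\times_U U_j$. So it suffices to show that every crisp morphism $\pi\colon U'\to U$ is an effective epimorphism, i.e. that $h_T(U)\to h_T(U')\rightrightarrows h_T(U'\times_U U')$ is exact.

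Next we reduce to the quasi-compact case. By Definition \ref{crisp-morphism} (iii), choose an open affine covering $(V_j)_j$ of $U$ and quasi-compact opens $W_j\subseteq\pi^{-1}(V_j)$ with $\pi(W_j)=V_j$. Then $\pi|_{W_j}\colon W_j\to V_j$ is crisp, and it is quasi-compact because $W_j$ is quasi-compact and $V_j$ is affine and hence quasi-separated. Morphisms into $T$ glue along the Zariski covering $(V_j)$. Moreover, a morphism $U'\to T$ equalized on $U'\times_U U'$ restricts to one on $W_j$ equalized on $W_j\times_{V_j}W_j$. It therefore suffices to prove the following claim: every quasi-compact crisp $g\colon Y'\to Y$ is an effective epimorphism. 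We also need compatibility on overlaps $V_j\cap V_k$ and uniqueness, and both follow from the same claim applied to the restrictions. (Uniqueness over all of $U$ holds because each $\pi|_{W_j}$ is an epimorphism.)

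For the claim, take a quasi-compact crisp $g$. By Proposition \ref{crisp-subtrusive} it is universally subtrusive, hence universally submersive by Lemma \ref{subtrusive-submersive}. By Proposition \ref{pureqcqs-crisp} it is pure, so $\O_Y\to g_*\O_{Y'}$ is injective after every base change. The strategy is the classical fpqc argument, as in \cite{FA05} or the proof that fpqc coverings are effective epimorphisms. Let $a\colon Y'\to T$ have equal pullbacks to $Y'\times_Y Y'$.

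The first step is topological. Since $g$ is submersive and $a$ is constant on the fibres of $g$ (the points of $Y'\times_Y Y'$ surject onto pairs of points with the same image), $a$ descends to a continuous map $\bar a\colon |Y|\to|T|$.

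The second step is the sheaf-theoretic part. We work locally: take an affine open $\Spec R\subseteq T$ and its preimage $Y_0\coloneqq\bar a^{-1}(\Spec R)$, which is open, with $g^{-1}(Y_0)=a^{-1}(\Spec R)$. Shrinking $Y_0$ to affines $\Spec A$ and using quasi-compactness, we get a finite affine covering $\bigsqcup_k\Spec B_k$ of $g^{-1}(\Spec A)$. By Definition \ref{crisp-morphism} (ii) and Proposition \ref{equivalent-on-affines}, $A\to B\coloneqq\prod_k B_k$ is crisp. A morphism $\Spec A\to\Spec R$ is a ring map $R\to A$. The given data yield $R\to B$ equalized by the two maps $B\rightrightarrows B\otimes_A B$; here we must check that $\Spec(B\otimes_A B)$ surjects in the appropriate sense onto $g^{-1}(\Spec A)\times g^{-1}(\Spec A)$ pieces, i.e. that the compatibility on $Y'\times_Y Y'$ implies compatibility on $\Spec B\otimes_A B$. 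By Theorem \ref{crisp-equivalences} (x) with $R=A$, the sequence $A\to B\rightrightarrows B\otimes_A B$ is exact, so $R\to B$ factors uniquely through $A$. This gives a morphism $\Spec A\to\Spec R$ over $S$.

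The last step glues these local morphisms, using uniqueness on overlaps, and checks that the composite with $g$ is $a$. This check is done on the affine pieces, where it holds by construction.

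I expect the main obstacle to be the topological descent of the first step, together with ensuring that the local construction is compatible with the chosen $\bar a$. Here submersiveness from Proposition \ref{crisp-subtrusive} is essential. Passing through the non-canonical affine covers of $g^{-1}(\Spec A)$ also requires Definition \ref{crisp-morphism} (ii), which says that any finite affine covering works.
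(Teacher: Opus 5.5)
Your route is essentially the paper's, which is the fpqc argument of \cite{FA05} transported to the crisp setting. You reduce to a single covering and then to quasi-compact pieces, which the paper packages as Lemma~\ref{reducetoonecover}. You descend $a$ topologically using that quasi-compact crisp morphisms are submersive (Proposition~\ref{crisp-subtrusive}, Lemma~\ref{subtrusive-submersive}). You then conclude affine-locally from exactness of $A\to B\rightrightarrows B\otimes_A B$, which you correctly read off from Theorem~\ref{crisp-equivalences}~(x) with $R=A$; this is exactly the content of Lemma~\ref{reducetoaffine}. The only organisational difference is that you keep a quasi-compact crisp $g$ over an arbitrary target and choose affines $\Spec A\subseteq\bar a^{-1}(\Spec R)$ directly. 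The paper instead first reduces to affine $V\to U$ and then uses the sheaf property of $h_{X_i}$ on the possibly non-affine opens $U_i\subseteq U$. Both work.

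One verification is missing from your reduction to the quasi-compact case. After gluing the $b_j\colon V_j\to T$ into $b\colon U\to T$, you only know $b\circ\pi=a$ on $\bigcup_j W_j$. You never show it on the rest of $U'$, and that part is arbitrary, since $\pi^{-1}(V_j)$ need not be quasi-compact. This is exactly where the garbage principle (Remark~\ref{garbage}) is needed:
\begin{itemize}
\item Take an affine open $Q\subseteq\pi^{-1}(V_j)$. The morphism $W_j\sqcup Q\to V_j$ is quasi-compact and crisp, and $a|_{W_j\sqcup Q}$ is equalized on its self-fibre product, because that fibre product maps to $U'\times_U U'$.
\item Your claim therefore gives $c\colon V_j\to T$ with $c\circ\pi=a$ on both $W_j$ and $Q$.
\item Restricting to $W_j$ and using that $W_j\to V_j$ is an epimorphism (surjective and schematically dominant) forces $c=b_j$, so $b\circ\pi=a$ on $Q$.
\end{itemize}
Alternatively, $Q\times_{V_j}W_j\to Q$ is a quasi-compact crisp base change, hence an epimorphism, and $a|_Q$ and $(b_j\circ\pi)|_Q$ agree after composing with it. The fpqc proof behind Lemma~\ref{reducetoonecover} contains precisely this step.

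A similar small point arises when you assert that $A\to\prod_k B_k$ is crisp for an arbitrary finite affine cover of all of $g^{-1}(\Spec A)$. Definition~\ref{crisp-morphism}~(ii) only controls covers of some $U$ with $g(U)=\Spec A$. So you need the garbage principle again, or Proposition~\ref{pureqcqs-crisp} combined with the faithfully flat quasi-compact map $\bigsqcup_k\Spec B_k\to g^{-1}(\Spec A)$. With these two additions your proof is complete.
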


We strictly follow the proof of the corresponding Theorem 2.55 for the fpqc topology in \cite{FA05} and start off with a lemma which is well known for other Grothendieck topologies:

\begin{Lemma}\label{reducetoonecover}
    Let  $F\colon \textnormal{(Sch/$S$)}^{\mathrm{op}}\to (\mathrm{Set})$ be a functor. If $F$ satisfies
    \begin{definitionlist}
        \item F is a sheaf in the Zariski topology
        \item and whenever $V\to U$ is a crisp morphism of affine $S$-schemes, the diagram 
            \[\begin{tikzcd}
                F(U)\arrow[r] &F(V)\arrow[r,yshift=2pt]\arrow[r,yshift=-2pt] &F(V\times_U V)
            \end{tikzcd}\]
        is an equalizer,
    \end{definitionlist}
    then $F$ is a sheaf in the crisp topology.
\end{Lemma}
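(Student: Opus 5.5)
We follow the proof of the corresponding statement for the fpqc topology in \cite[Lemma 2.60]{FA05}. Let $\{U_i\to U\}_{i\in I}$ be a crisp covering in $\textnormal{(Sch/$S$)}$. We must show that
\[F(U)\to \prod_i F(U_i)\rightrightarrows \prod_{i,j}F(U_i\times_U U_j)\]
is an equalizer. There are three reductions.

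\textbf{First reduction.} Since $F$ is a Zariski sheaf, it sends disjoint unions to products. So $\prod_i F(U_i)=F(\bigsqcup_i U_i)$, and likewise $\prod_{i,j}F(U_i\times_U U_j)=F\bigl((\bigsqcup_i U_i)\times_U(\bigsqcup_j U_j)\bigr)$. We may therefore replace the covering by the single crisp morphism $V\coloneqq\bigsqcup_i U_i\to U$.

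\textbf{Second reduction: localising on $U$.} Choose an open affine covering $(U_\alpha)$ of $U$. By Definition \ref{crisp-morphism}, for each $\alpha$ there is a quasi-compact open $W_\alpha\subseteq \pi^{-1}(U_\alpha)$ with $\pi(W_\alpha)=U_\alpha$. It has a finite affine open covering $W_{\alpha,1},\dots,W_{\alpha,r}$ such that $W'_\alpha\coloneqq\bigsqcup_k W_{\alpha,k}\to U_\alpha$ is a crisp morphism of affine schemes. By Proposition \ref{equivalent-on-affines} this is the same as crispness of the ring map. Hypothesis (b) then gives that $F(U_\alpha)\to F(W'_\alpha)\rightrightarrows F(W'_\alpha\times_{U_\alpha}W'_\alpha)$ is an equalizer.

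\textbf{Gluing.} Take $s\in F(V)$ whose two pullbacks to $F(V\times_U V)$ agree.
\begin{itemize}
\item Restrict $s$ along $W'_\alpha\to V$. Its two pullbacks to $W'_\alpha\times_{U_\alpha}W'_\alpha\subseteq V\times_U V$ agree, so it descends to a unique $t_\alpha\in F(U_\alpha)$.
\item On overlaps $U_\alpha\cap U_\beta$, cover the overlap by affines $U_{\alpha\beta\gamma}$. Pull back the two covers to get crisp affine morphisms over $U_{\alpha\beta\gamma}$, using Theorem \ref{permanence-crisp} and Proposition \ref{equivalent-on-affines}. For instance, take $W'_\alpha\times_{U_\alpha}U_{\alpha\beta\gamma}\to U_{\alpha\beta\gamma}$, which is a crisp morphism of affines. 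By hypothesis (b), the map from $F(U_{\alpha\beta\gamma})$ into $F$ of this cover is injective.
\item Both $t_\alpha|$ and $t_\beta|$ pull back to restrictions of $s$ along maps into $V$. To compare them, pass to the fibre product of the two affine covers over $U_{\alpha\beta\gamma}$. This is again a crisp affine cover, since crispness is stable under base change and composition. On it, the two restrictions of $s$ agree because $s$ has equal pullbacks on $V\times_U V$.
\item Injectivity gives $t_\alpha=t_\beta$ on each $U_{\alpha\beta\gamma}$. The Zariski sheaf property then gives agreement on $U_\alpha\cap U_\beta$, and the $t_\alpha$ glue to some $t\in F(U)$.
\end{itemize}

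\textbf{Checking the glued section.} We need $t$ to pull back to $s$ on all of $V$, not only on the $W'_\alpha$. This is the main obstacle: the $W_\alpha$ need not cover $V$, which is where the garbage principle bites. To handle it, let $v\in V$ lie over $U_\alpha$ and pick an affine open $V_0\ni v$ of $V$ mapping into $U_\alpha$. Then $V_0\sqcup W'_\alpha\to U_\alpha$ is still a crisp morphism of affine schemes by Remark \ref{garbage}. Applying hypothesis (b) to it, $s|_{V_0\sqcup W'_\alpha}$ descends to some element of $F(U_\alpha)$. On the $W'_\alpha$ component that element equals $t_\alpha$, by injectivity of $F(U_\alpha)\to F(W'_\alpha)$. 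Hence $s|_{V_0}=t|_{V_0}$. Since the $V_0$ cover $V$, the Zariski sheaf property gives $s=t|_V$.

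\textbf{Uniqueness.} Any $t\in F(U)$ is determined by its restrictions $t|_{U_\alpha}$. By the injectivity in hypothesis (b), these are in turn determined by their images in $F(W'_\alpha)$, which are restrictions of the image of $t$ in $F(V)$. So $F(U)\to F(V)$ is injective.
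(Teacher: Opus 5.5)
Your proof is correct and is essentially the paper's argument. The paper simply cites Vistoli's proof \cite[Lemma 2.60]{FA05} with ``fpqc'' replaced by ``crisp'', and you carry that proof out in detail, correctly handling the one step where the fpqc argument uses flatness (adjoining an arbitrary affine open $V_0$ to the affine cover $W'_\alpha$) by the garbage principle (Remark~\ref{garbage}) instead. One small wording point: $W'_\alpha\times_{U_\alpha}W'_\alpha$ is not a subscheme of $V\times_U V$, but it maps to it compatibly with both projections, which is all your argument uses.
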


\begin{proof}
    The proof can be found in \cite[proof of Lemma 2.60]{FA05}, substituting ``fpqc'' with ``crisp'', which does not change the argumentation.
\end{proof}

We need another technical lemma: 

\begin{Lemma}\label{reducetoaffine}
    Let $A$ and $B$ be $R$-algebras. Let $f\colon A\to B$ be a crisp ring homomorphism. Then, the sequence
        \[0\longrightarrow A\overset{f}{\longrightarrow} B\overset{e_1-e_2}{\longrightarrow}B\otimes_A B,\]
    where $e_1(b) = b\otimes 1$ and $e_2(b) = 1\otimes b$, is exact.
\end{Lemma}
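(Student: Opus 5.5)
Injectivity of $f$ at the first spot is immediate, since a crisp ring homomorphism is injective (take $P = A$ in Definition \ref{crisp-module-map}). Also $(e_1 - e_2)\circ f = 0$, because $f(a)\otimes 1 = 1\otimes f(a)$ in $B\otimes_A B$. The real content is exactness at $B$: every $b\in B$ with $b\otimes 1 = 1\otimes b$ must lie in $f(A)$. My plan is to derive this from Theorem \ref{crisp-equivalences} (ix) with $M = A$, which says that $A\to B\rightrightarrows B\otimes_A B$ is an equalizer, i.e. exactly the claimed exactness. Since (i)$\Leftrightarrow$(ix) is only cited there, I would sketch a self-contained argument as follows.

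The key step is to tensor the sequence with $B$ over $A$, say on the right, and check that the resulting sequence of $B$-modules
\[
0\longrightarrow B\longrightarrow B\otimes_A B\longrightarrow B\otimes_A B\otimes_A B
\]
is exact. Here the maps are $b\mapsto b\otimes 1$ (after the identification $A\otimes_A B = B$) and $b\otimes b'\mapsto b\otimes 1\otimes b' - 1\otimes b\otimes b'$. This sequence is split by an explicit contracting homotopy using the multiplication of the right-hand factor:
\[
s_0\colon B\otimes_A B\to B,\quad b\otimes b'\mapsto bb',
\qquad
s_1\colon B\otimes_A B\otimes_A B\to B\otimes_A B,\quad b\otimes b'\otimes b''\mapsto b\otimes b'b''.
\]
Checking the identity $s_1\circ d + (1\otimes\text{-})\circ s_0 = \mathrm{id}$ on $B\otimes_A B$ is routine. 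It shows that any $x\in\Ker(d)$ equals the image of $s_0(x)$, so the base-changed sequence is exact. This is the standard Amitsur-complex argument, the same splitting already used in the proof of (v)$\Rightarrow$(i).

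To descend exactness from $B$ to $A$, let $Q = \Coker(f)$ and let $\bar d\colon Q\to B\otimes_A B$ be the map induced by $e_1 - e_2$; exactness at $B$ is equivalent to injectivity of $\bar d$. Tensoring with $B$ is right exact, so $Q\otimes_A B = \Coker(f\otimes\mathrm{id}_B)$. The exactness established above then says precisely that $\bar d\otimes \mathrm{id}_B$ is injective. By Theorem \ref{crisp-equivalences} (v), or directly by Lemma \ref{inj-surj-iso-crisp} (1), $\bar d$ is injective, which proves exactness.

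The main obstacle is the bookkeeping in identifying $\Coker(f)\otimes_A B$ with the cokernel of the base-changed map and in matching $\bar d\otimes\mathrm{id}_B$ with the map of the tensored sequence. I expect this to be handled cleanly by right exactness. The homotopy computation itself is mechanical.
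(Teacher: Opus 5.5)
Your proof is correct and takes the same route as the paper. You base change along $f$ itself, where the sequence becomes split exact via the multiplication section of $B\to B\otimes_A B$ (the argument of \cite[Lemma 2.61]{FA05}). You then descend exactness along the crisp map $f$, making the paper's bare appeal to Theorem~\ref{crisp-equivalences} explicit by applying Lemma~\ref{inj-surj-iso-crisp}~(1) to the induced map $\Coker(f)\to B\otimes_A B$.

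One small slip: after identifying $A\otimes_A B=B$, the first map of the tensored sequence is $b\mapsto 1\otimes b$, not $b\mapsto b\otimes 1$. With the latter the sequence would not even be a complex. The map $b\mapsto 1\otimes b$ is in fact the one your homotopy identity $s_1\circ d+(1\otimes\text{-})\circ s_0=\mathrm{id}$ uses.
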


\begin{proof}
    The proof in \cite{FA05} for the corresponding Lemma 2.61 in the fpqc case shows that this exactness is true after some faithfully flat base change. This exactness also descends along crisp ring homomorphisms by Theorem \ref{crisp-equivalences}.
\end{proof}

\begin{proof}[Proof of Theorem \ref{subcanonical}]
    Let $X$ be an $S$-scheme. We want to show that $h_X$ is  a sheaf in the crisp topology. It is a Zariski sheaf and so using Lemma \ref{reducetoonecover}, it suffices to check the sheaf condition for $h_X$ on crisp coverings by a single scheme (condition (b)).\\
    Let us first assume that $X$ is an affine scheme. Then, condition (b) from Lemma \ref{reducetoonecover} becomes Lemma \ref{reducetoaffine}. To deduce the general case, one only uses the fact that fpqc and crisp morphisms of affine schemes have the common property of being submersive and again follows the proof in \cite{FA05}. 
\end{proof}
    \printbibliography

    \textsc{Saskia Kern, Technische Universität Darmstadt, Fachbereich Mathematik FB04, Schlossgartenstraße 7, 64289 Darmstadt, Germany}
    
    \textit{Email address:} \texttt{skern@mathematik.tu-darmstadt.de}
\end{document}